\newtheoremstyle{my theoremstyle}
{1.0em}                    
    {1.0em}                    
    {\itshape}                   
    {}                           
    {\scshape}                   
    {.}                          
    {.5em}                       
    {}  
\newtheoremstyle{dfn}
{1.0em}                    
    {1.0em}                    
    {}                   
    {}                           
    {\scshape}                   
    {.}                          
    {.5em}                       
    {}  
\theoremstyle{my theoremstyle}
   \newtheorem{thm}{Theorem}[section]
   \newtheorem{lem}[thm]{Lemma}
   \newtheorem{prop}[thm]{Proposition}
   \newtheorem{cor}[thm]{Corollary}
\theoremstyle{dfn}
   \newtheorem{dfn}[thm]{Definition}
\theoremstyle{remark}   
   \newtheorem{rmk}[thm]{{\scshape Remark}}
\renewcommand{\labelenumi}{(\roman{enumi})}
\newcommand{\C}{\mathbb{C}}
\newcommand{\Q}{\mathbb{Q}}
\newcommand{\Z}{\mathbb{Z}}
\newcommand{\R}{\mathbb{R}}
\renewcommand{\P}{\mathbb{P}}
\renewcommand{\d}{\delta}
\renewcommand{\l}{\lambda}
\def\L{{\mathscr{L}}}
\newcommand{\spec}{\operatorname{Spec}}
\newcommand{\rk}{\operatorname{rank}}
\newcommand{\ch}{\operatorname{CH}}
\newcommand{\del}{\partial}
\newcommand{\Aut}{{\mathrm{Aut}}}
\def\aru{{\arrow[u,"\rotatebox{90}{$\in$}",phantom]}}
\numberwithin{equation}{section}
\date{\today}
\begin{document}
\title[Construction of Higher Chow cycles on cyclic coverings of $\P^1 \times \P^1$]{Construction of Higher Chow cycles on cyclic coverings of $\P^1 \times \P^1$}
\author{Yusuke Nemoto and Ken Sato}
\date{\today}
\address{Graduate School of Science and Engineering, Chiba University, 
Yayoicho 1-33, Inage, Chiba, 263-8522 Japan.}
\email{y-nemoto@waseda.jp}

\address{Department of Mathematics, Institute of Science Tokyo, 2-12-1 Ookayama, Meguro-ku, Tokyo 152-8551, Japan.}
\email{sato.k.da@m.titech.ac.jp}

\keywords{Higher Chow cycle; regulator map; Hypergoemtric curve.}
\subjclass[2020]{14C25, 19F27}

\begin{abstract}
In this paper, we construct higher Chow cycles of type $(2, 1)$ on a certain family of surfaces, which are constructed by a product of certain hypergeometric curves of degree $N$.  
We prove that for a very general member, these cycles are linearly independent over $\Z$ and generate a subgroup of $\operatorname{rank} \ge 36 \cdot \varphi(N)$, where $\varphi(N)$ is Euler's totient function, by computing the image of the transcendental regulator map. 
\end{abstract}

\maketitle

\section{Introduction}
For an algebraic variety $X$, Bloch \cite{Bloch} introduced \textit{higher Chow groups} $\ch^p(X,q)$ for a variety.
These groups are generalizations of the classical Chow groups since we have an isomorphism $\ch^p(X, 0) \simeq \ch^p(X)$.
When $X$ is smooth, $\ch^p(X,q)$ coincides with the motivic cohomology $H_{\mathcal{M}}^{2p-q}(X, \Z(p))$, and is related to many aspects of algebraic geometry, $K$-theory, and number theory.
Higher Chow groups of codimension $p=1$ are described by classical invariants, e.g., $\ch^1(X,0)=\mathrm{Pic}(X)$, $\ch^1(X,1)=\Gamma(X,\mathscr{O}_X^\times)$.
However, their structures are mysterious when codimension $p>1$.

This article studies $\ch^2(X,1)$ for a smooth projective surface $X$.
An essential part of $\ch^2(X,1)$ is the {\it indecomposble part} $\ch^2(X, 1)_{\mathrm{ind}}$, which is the cokernel of the map $\C^\times\otimes \mathrm{Pic}(X) \to \ch^2(X,1)$ induced by the intersection product.
Construction of non-trivial elements in $\ch^2(X, 1)_{\mathrm{ind}}$ is done for mainly $K3$ surfaces and abelian surfaces (e.g., \cite{MS}, \cite{Spi}, \cite{Sre14}, \cite{CDKL}). 
On the contrary, few construction is known results on $\ch^2(X, 1)_{\mathrm{ind}}$ for a surface $X$ of general type except \cite{MSC}.

In this paper, we focus on a quotient of a product of certain hypergeometric curves, which are the smooth models of the projective closure of the affine curves defined by
\begin{equation*}
y^N = x^a(1-x)^b(1-\lambda x)^c\quad (\l\in\P^1\setminus\{0,1,\infty\}). 
\end{equation*}
Here, $a$, $b$, $c$, and $N$ are positive integers such that $\gcd(a, b, c, N)=1$. 
Note that hypergeometric curves have a natural $\mu_N$-action on $y$, where $\mu_N$ is the group of the $N$th root of unity.

For integers $N\ge 2$ and $A$ satisfying $0<A<N$ and $\gcd(N,A)=1$, let $C_1\to \P^1\setminus\{0,1,\infty\}$ (resp. $C_2\to \P^1\setminus\{0,1,\infty\}$) be the family of hypergeometric curves whose parameters satisfy
\begin{equation}\label{condparameter}
a=b=c=A\quad (\text{resp. } a=b=c=N-A).
\end{equation}
We use $\l_i$ for the indefinite of the parameter space of $C_i$.
We define the family of surfaces $S$ by the minimal resolution of the quotient of $C_1\times C_2$ by the diagonal $\mu_N$-action.
By taking a suitable \'etale base change by $T\to (\P^1\setminus\{0,1,\infty\})^2$, 
we have a family of surfaces $S \to T$.  
We explicitly construct a collection of families of higher Chow cycles which generate a subgroup $\widetilde{\Xi}_t$ of $\ch^2(S_t, 1)_{\mathrm{ind}}$ for each $t\in T$, where $S_t$ denotes the fiber over $t$.  
Then we have the following.

\begin{thm}[Theorem \ref{mainthm}] \label{mainintro}
Suppose $N \neq 2$ and $(N+1)/3 \leq A  \leq (2N-1)/3$. 
Then for very general $t\in T$, we have 
$$\rk \widetilde{\Xi}_t\ge 36\cdot \varphi(N),$$
where $\varphi$ is Euler's totient function.
In particular, we have 
$$
\rk \ch^2(S_t, 1)_{\rm ind} \geq 36 \cdot \varphi(N)
$$ 
for such a surface.
\end{thm}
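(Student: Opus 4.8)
The plan is to deduce the rank bound from a regulator computation. I will evaluate the generators of $\widetilde{\Xi}_t$ under the transcendental regulator — the component of the Beilinson regulator $\ch^2(S_t,1)\to H^3_{\mathcal{D}}(S_t,\R(2))$ landing in the transcendental part of $H^2(S_t)$ — and show that for very general $t$ these images span a subspace of rank at least $36\cdot\varphi(N)$. As the regulator is a homomorphism, this forces the generators to be $\Z$-linearly independent, giving $\rk\widetilde{\Xi}_t\ge 36\cdot\varphi(N)$ and hence the bound on $\rk\ch^2(S_t,1)_{\mathrm{ind}}$. For a precycle $\xi=\sum_i(Z_i,f_i)$ and a transcendental Hodge class $\omega$, the pairing $\langle r(\xi),\omega\rangle$ is given by the usual current/membrane formula, combining curve integrals $\int_{Z_i}\log|f_i|\,\omega$ with a membrane integral $\int_{\Gamma}\omega$ over a $2$-chain $\Gamma$ bounding $\sum_i Z_i$; the whole computation is controlled by how the $Z_i$, the $f_i$, and the invariant transcendental classes interact.

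The first step is Hodge-theoretic bookkeeping on $S_t$. Pulling back along $C_1\times C_2\to S_t$, the invariant transcendental cohomology decomposes, via K\"unneth, into character blocks: the diagonal $\mu_N$-invariants pick out, for each $k$, the pieces $H^1(C_1)_{\chi_k}\otimes H^1(C_2)_{\chi_{-k}}$, and the complementary choice of parameters ($A$ for $C_1$, $N-A$ for $C_2$) is what couples these into genuine transcendental Hodge classes. Writing the standard eigenforms $x^{m}\,dx/y^{k}$ (twisted by the branch divisor) on each hypergeometric curve, the hypothesis $(N+1)/3\le A\le(2N-1)/3$ is exactly what guarantees, for every primitive character (those with $\gcd(k,N)=1$, of which there are $\varphi(N)$), that the relevant eigenspaces $H^{1,0}(C_i)_{\chi_k}$ are nonzero; the excluded case $N=2$ is where this structure, and $\varphi(N)$ itself, degenerate. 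Tracking the resulting dimensions shows that each of the $\varphi(N)$ primitive blocks supports up to $36$ independent regulator pairings, furnished by the several natural functions $x,\,1-x,\,1-\lambda_i x$ and the fiber/graph curves available on the product — this is the origin of the factor $36\cdot\varphi(N)$.

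The second step is to carry out the regulator computation by reducing it to the curves. Because the constructed $Z_i$ are fibers or graphs over the common base $\P^1\setminus\{0,1,\infty\}$ and the $f_i$ are pulled back from the hypergeometric coordinates, both the curve integrals and the membrane integral factor: with $\omega=\alpha\boxtimes\beta$ in K\"unneth form and $\Gamma$ a product of $1$-chains, $\int_{\Gamma}\alpha\boxtimes\beta=\bigl(\int\alpha\bigr)\bigl(\int\beta\bigr)$, a product of one-dimensional period/regulator integrals on the two hypergeometric curves. These one-dimensional integrals are classical: paired against the eigenforms they evaluate to hypergeometric functions of $\lambda_i$ with explicit Beta/Gamma prefactors. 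In this way the full array of regulator values becomes an explicit matrix of hypergeometric periods indexed by the $\varphi(N)$ characters and the $36$ cycles.

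The hard part will be the final step: proving $\Z$-linear independence of these values for \emph{very general} $t$. The plan is to spread the cycles over the family so that $t\mapsto r(\xi_j)(t)$ become (multivalued) real-analytic sections, and to note that a $\Z$-linear relation among the $\xi_j$ on a very general fiber would — since $r$ is a homomorphism and the sections are real-analytic — propagate to the same relation on all of $T$. To contradict this I will pass to the single-valued infinitesimal invariant: applying the Picard--Fuchs operator annihilating the relevant periods to the regulator sections yields explicit algebraic inhomogeneous terms, and it suffices to show these are linearly independent. The eigenspace decomposition separates the $\varphi(N)$ characters (Galois conjugacy linking them), reducing the problem to a non-degeneracy statement within a single character block, which I would settle by computing a regulator/period determinant or by a degeneration analysis as $\lambda_i\to 0$, where the hypergeometric integrals acquire controlled logarithmic asymptotics. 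Showing that no accidental cancellation occurs simultaneously across all $36$ cycles and all $\varphi(N)$ primitive characters is the principal obstacle, and this is precisely where the explicit hypergeometric evaluations and the constraint $(N+1)/3\le A\le(2N-1)/3$ become indispensable.
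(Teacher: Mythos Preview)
Your overall architecture—compute the transcendental regulator, apply the Picard--Fuchs operator to obtain explicit inhomogeneous terms, then prove those terms are linearly independent—matches the paper's. But two of your load-bearing steps are wrong or missing.

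First, the membrane integral does \emph{not} factor. You write that with $\omega=\alpha\boxtimes\beta$ and $\Gamma$ a product of $1$-chains one has $\int_\Gamma\alpha\boxtimes\beta=(\int\alpha)(\int\beta)$. But the $2$-chain bounding the $1$-cycle associated with $\xi_1^{(0)}-\xi_0^{(0)}$ is (the lift of) the \emph{triangle} $\{0<y\le x<1\}$, not a rectangle; the regulator value $\mathcal{L}(t)$ is a genuinely two-variable iterated integral, not a product of hypergeometric periods. This is why $\mathscr{D}_{\lambda_1}\mathcal{L}$ is a nonzero algebraic function rather than zero. Your reduction ``to the curves'' would compute periods, not regulators.

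Second, and more seriously, you have no mechanism producing the factor $36$. In the paper, $\widetilde{\Xi}_t$ is by definition generated by the $\widetilde{G^2}$-translates of the initial cycles, where $\widetilde{G^2}$ sits over $G_0^2\simeq\mathfrak{S}_3\times\mathfrak{S}_3$ (order $36$) acting on $\P^1\times\P^1\times T$ by permuting the branch points. The entire proof is organized by this action: one computes how the normal function transforms under $\widetilde{G^2}$ via explicit $1$-cocycles, obtains $\dim_\Q\Xi^{\mathrm{can}}=2\varphi(N)$ from the $\Z[\zeta]$-module structure, then $\dim_\Q\Xi_\Delta=6\varphi(N)$ using the diagonal $\Delta_0\subset G_0^2$, and finally $\dim_\Q\Xi\ge 36\varphi(N)$ by showing that the six $G_0^2/\Delta_0$-translates of $\Xi_\Delta$ are independent. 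That last step is proved by a \emph{pole comparison}: the functions $\mathscr{D}_{\lambda_1}(\langle\nu,\omega\rangle)$ for $\nu\in\Xi_\Delta$ have poles only along $\{\lambda_1=\lambda_2\}$, while their $\widetilde{\rho}_i$-translates have poles only along $\{\lambda_1=\sigma(\lambda_2)\}$ for the other $\sigma\in\mathfrak{S}_3$; these loci are disjoint, forcing independence. Your account (``furnished by the several natural functions $x,1-x,1-\lambda_i x$ and the fiber/graph curves'') neither identifies the group nor supplies any substitute for this pole argument, and a bare ``degeneration as $\lambda_i\to 0$'' will not separate the six cosets, since all the inhomogeneous terms have the same local behavior there.
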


By this theorem, the rank of $\ch^2(X_,1)_{\mathrm{ind}}$ for smooth complex projective surfaces $X$ can be arbitrary large.  
A similar result was essentially obtained by Terasoma \cite{Terasoma}, but our construction is totally different.

The key point of our construction is to regard $S_t$ as desingularized coverings of $\mathbb{P}^1 \times \mathbb{P}^1$ of degree $N$. 
We construct a higher Chow cycle $\xi_1^{(0)}-\xi_0^{(0)}\in \ch^2(S_t,1)$  using the exceptional curves and the pull-backs of the diagonal cycles on $\mathbb{P}^1 \times \mathbb{P}^1$, which is a rational curve by the condition \eqref{condparameter}.  
This cyclic covering construction is a generalization of the one used in \cite{Sato}.

First, we prove that the higher Chow cycle $\xi_1^{(0)}-\xi_0^{(0)}\in \ch^2(S_t,1)$ is a non-trivial indecomposable cycle.   
To prove this, we use the {\it transcendental regulator map} (see Definition \ref{transreg})
\begin{equation*}
r\colon \mathrm{CH}^2(S_t,1) \to \frac{F^1H^2(S_t,\C)^\vee}{H_2(S_t,\Q)} \twoheadrightarrow \frac{H^{2,0}(S_t)^\vee}{H_2(S_t,\Q)}, 
\end{equation*}
where the first map is the Beilinson regulator map and the second map is the natural projection. 
The transcendental regulator map factors through $\ch^2(S_t, 1)_{\mathrm{ind}}$, hence it suffices to show that the image of this map is non-trivial.

We evaluate the image of $(\xi_1^{(0)})_t - (\xi_0^{(0)})_t$ under the map $r$ at a certain relative holomorphic 2-form\footnote{
This 2-form is obtained by the wedge product of $dx/y$ on families of hypergeometric curves $C_i$, where we use the assumption $(N+1)/3 \leq A \leq (2N-1)/3$ to guarantee this form become holomorphic.} $\omega$, then we obtain a multi-valued holomorphic function $\mathcal{L}(t)$ on $T$ satisfying the following system of differential equations (see Theorem \ref{main:1}): 
\begin{equation}\label{PFdiff}
\left\{
\begin{aligned}
& \l_1(1-\l_1) \dfrac{\partial^2\mathcal{L}}{\partial \l_1^2} + 2\left(1-\frac{A}{N} - \l_1 \right) \dfrac{\partial\mathcal{L}}{\partial \l_1} -\frac{A}{N}\left(1-\frac{A}{N}\right)\mathcal{L} = \dfrac{1 - \zeta^A}{\l_1-\l_2} \left( \dfrac{(1-\l_2)^{\frac{A}{N}}}{(1-\l_1)^{\frac{A}{N}}}-1 \right), \\
& \l_2(1-\l_2) \dfrac{\partial^2\mathcal{L}}{\partial \l_2^2} + 2\left(1-\frac{A}{N} - \l_2 \right) \dfrac{\partial\mathcal{L}}{\partial \l_2} -\frac{A}{N}\left(1-\frac{A}{N}\right)\mathcal{L} = \dfrac{1 - \zeta^A}{\l_1-\l_2} \left(1- \dfrac{(1-\l_1)^{\frac{N-A}{N}}}{(1-\l_2)^{\frac{N-A}{N}}} \right). \\
\end{aligned}
\right.
\end{equation}
This is an example of an inhomogeneous Picard-Fuchs differential equation satisfied by normal functions of higher Chow cycles (\cite{dAMS08}, \cite{CDKL}).
Denote the differential operators of the left-hand side of \eqref{PFdiff} by $\mathscr{D}_{\l_1}$ and $\mathscr{D}_{\l_2}$, respectively. Since the periods of $S \to T$ with respect to $\omega$ is annihilated by $\mathscr{D}_{\l_1}$ and $\mathscr{D}_{\l_2}$ (see Proposition \ref{PF}), the function $\mathcal{L}(t)$ is linearly independent from periods, hence we can prove that $\xi_1^{(0)}-\xi_0^{(0)}$ is indecomposable for very general $t\in T$.

In the latter part of this paper (Section \ref{GroupAction}), we consider an action of a finite group $\widetilde{G^2}$ (of order $72N^2$) on the family $S\to T$.
In particular, the subgroup $\widetilde{\Xi}_t$ in Theorem \ref{mainintro} is generated by the images of $\xi_1^{(0)}-\xi_0^{(0)}$ under the $\widetilde{G^2}$-action.
Since $\widetilde{G^2}$ does not act on $T$ trivially, the $\widetilde{G^2}$-action on the normal function is complicated.
Thus, the most part of Section \ref{GroupAction} is devoted to get explicit descriptions of $\widetilde{G^2}$-actions on cycles, relative 2-forms, and
Picard-Fuchs differential operators $\mathscr{D}_{\l_1}$ and $\mathscr{D}_{\l_2}$.
Then we complete the proof of Theorem \ref{mainintro}.

Finally, we mention the difference between Theorem \ref{mainintro} and the result in \cite{Sato}, which treats the case $N=2$ and $A=1$.
In this case, $S_t$ is the Kummer surface and the second author proves $\operatorname{rank} \widetilde{\Xi}_t \geq 18$ for very general $t\in T$ (see \cite[Theorem 7.1]{Sato}). 
Thus, Theorem \ref{mainintro} indicates that $\Xi_t$ becomes larger when $N>2$. 
This difference can be explained as follows:
Let $\Delta$ be a subgroup of $\widetilde{G^2}$ coming from the diagonal action on $\P^1\times \P^1$.
By the action of $\Delta$, we have no additional higher Chow cycles in the case $N=2$, however we obtain additional cycles in the case $N>2$ because of the difference of $\mu_N$-actions on $C_1$ and $C_2$.

\subsection{Outline of the paper}
In Section \ref{HigherChow}, we recall some results from higher Chow cycles and the regulator maps.
In Section \ref{SurfaceFamily}, we define the family of the surfaces $S\to T$ and fix some notations.
In Section \ref{ConstChow}, we explain the construction of families of higher Chow cycles $\xi_0^{(i)}$ and $\xi_1^{(i)}$ for $i \in \Z/N\Z$. 
In Section \ref{computeReg}, we compute the image of $\xi_1^{(0)}-\xi_0^{(0)}$ under the transcendental regulator map.
In Section \ref{GroupAction}, we define the group $\widetilde{G^2}$ and its action on $S\to T$.
Then we calculate the $\widetilde{G^2}$-actions on Picard-Fuchs differential operators.
In Section \ref{main}, we calculate the $\widetilde{G^2}$-actions on the images of normal functions of families of cycles $\xi_0^{(i)}$ and $\xi_1^{(i)}$ under the Picard-Fuchs operators explicitly, we prove Theorem \ref{mainintro}.

\subsection{Notations} Throughout this paper, we fix a positive integer $N\ge 2$ and a primitive $N$th root of unity $\zeta$.
For a positive integer $m$, $\mu_m$ denotes the group of $m$th roots of unity.

\section{Preliminaries} \label{HigherChow}
\subsection{Higher Chow cycles of type (2,1)} 
For a smooth variety $X$ over $\C$, let $\mathrm{CH}^p(X,q)$ be the higher Chow group defined by Bloch.
In this paper, we treat the case $(p,q)=(2,1)$.
In this case, the following fact is well-known (see, e.g., \cite[Corollary 5.3]{MS2}).
\begin{prop}
The higher Chow group $\mathrm{CH}^2(X,1)$ is isomorphic to the middle homology group of the following complex.
\begin{equation*}
K_2^{\mathrm{M}} (\C(X)) \xrightarrow{T} \displaystyle\bigoplus_{C\in X^{(1)}}\C(C)^\times \xrightarrow{\mathrm{div}}\displaystyle\bigoplus_{p\in X^{(2)}}\Z\cdot p 
\end{equation*}
Here $X^{(r)}$ denotes the set of closed subvarieties of $X$ of codimension $r$. 
The map $T$ denotes the tame symbol map from the Milnor $K_2$-group of the function field $\C(X)$.
\end{prop}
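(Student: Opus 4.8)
The plan is to realize the three-term complex as a single antidiagonal of the coniveau spectral sequence for Bloch's higher Chow groups, and then to read off $\ch^2(X,1)$ as the homology of that complex at its middle term. First I would recall Bloch's cycle complex $z^p(X,\bullet)$ with $\ch^p(X,q)=H_q(z^p(X,\bullet))$, and filter it by codimension of support. Bloch's localization theorem \cite{Bloch} turns this filtration into an exact couple, producing a coniveau spectral sequence whose $E_1$-page is, in column $p$, a direct sum over points $x\in X^{(p)}$ of higher Chow groups $\ch^{2-p}(\kappa(x),\ast)$ of the residue fields, the differential $d_1$ raising $p$ by one and lowering the simplicial degree by one.

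The second step is to feed in the two structural facts about higher Chow groups of a field $k$: one has $\ch^{j}(k,m)=0$ whenever $m<j$, and $\ch^{j}(k,j)\cong K_j^{\mathrm{M}}(k)$ (the Nesterenko--Suslin--Totaro theorem, with the elementary low-weight cases $\ch^{0}(k,0)=\Z$ and $\ch^{1}(k,1)=k^{\times}$). Since $d_1$ preserves the quantity $p+(\text{simplicial degree})$, the $E_1$-page splits into antidiagonals, and the vanishing below the diagonal forces the antidiagonal relevant to $\ch^2(X,\ast)$ to be exactly
\[
K_2^{\mathrm{M}}(\C(X))\longrightarrow \bigoplus_{x\in X^{(1)}}\kappa(x)^{\times}\longrightarrow \bigoplus_{x\in X^{(2)}}\Z .
\]
Since a codimension-one point $x$ is the generic point of its closure $C=\overline{\{x\}}$, we have $\kappa(x)=\C(C)$, so this is the complex in the statement. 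The same vanishing leaves no room for higher differentials into or out of the middle spot, so that $\ch^2(X,1)$ is computed as the homology at the middle term.

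What remains, and what I expect to be the real work, is to identify the two instances of $d_1$ with the classical maps named in the statement: the first boundary map must be shown to be the tame symbol $T$ on $K_2^{\mathrm{M}}(\C(X))$, and the second the divisor map $\mathrm{div}$. This is a local computation at each codimension-one (resp. codimension-two) point, where one unwinds the localization boundary of \cite{Bloch} over a discrete valuation ring and matches it with the residue/tame-symbol formula; the proper-intersection conditions defining Bloch's complex are precisely what make these boundaries well defined.

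An alternative, more hands-on route that avoids the spectral sequence altogether is to work directly in Bloch's complex in simplicial degrees $\le 2$. Every cycle in $z^2(X,1)$ is, modulo degenerate cycles, a sum of graphs of rational functions $f$ on subvarieties $C\in X^{(1)}$, so $z^2(X,1)$ is identified with $\bigoplus_{C}\C(C)^{\times}$, its boundary to $z^2(X,0)$ is $f\mapsto \mathrm{div}_C(f)$, and the image of the boundary from $z^2(X,2)$ is generated by tame symbols coming from Steinberg-type relations; this reproduces the same presentation and is essentially the computation of \cite[Corollary 5.3]{MS2}.
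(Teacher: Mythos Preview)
Your proposal is correct, and both routes you sketch are standard and valid. Note, however, that the paper does not supply its own proof of this proposition: it states the result as well-known and refers the reader to \cite[Corollary~5.3]{MS2}. Your second, hands-on approach is precisely the argument found at that reference, so in that sense you have reconstructed exactly what the paper points to. Your first approach via the coniveau spectral sequence is a more structural alternative that yields the same presentation; it has the advantage of making the identification of $d_1$ with the tame symbol and divisor maps part of a general pattern (Gersten-type resolutions), at the cost of invoking Bloch's localization theorem and the Nesterenko--Suslin--Totaro comparison as black boxes. Either route is acceptable here, and since the paper treats the statement as background there is nothing further to reconcile.
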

Therefore, a higher Chow cycle in $\mathrm{CH}^2(X,1)$ is represented by a formal sum
\begin{equation}\label{formalsum}
\sum_j (C_j, f_j)\in \displaystyle\bigoplus_{C\in X^{(1)}}\C(C)^\times
\end{equation}
where $C_j$ are prime divisors on $X$ and $f_j\in \C(C_j)^\times$ are non-zero rational functions 
on them such that $\sum_j {\mathrm{div}}_{C_j}(f_j) = 0$ as codimension 2
cycles on $X$.

A \textit{decomposable cycle} in $\mathrm{CH}^2(X,1)$ is an element of the image of the group homomorphism
\begin{equation}\label{intersectionproduct}
\mathrm{Pic}(X)\otimes_\Z \Gamma(X,\mathscr{O}_X^\times)=\mathrm{CH}^1(X)\otimes_\Z \mathrm{CH}^1(X,1) \longrightarrow \mathrm{CH}^2(X,1)
\end{equation}
induced by the intersection product. 
Let $C$ be a prime divisor on $X$ and $[C]\in \mathrm{Pic}(X)$ be the class corresponding to $C$. 
The image of $[C]\otimes \alpha$ under (\ref{intersectionproduct}) is represented by $(C,\alpha|_C)$ in the presentation (\ref{formalsum}). 
The cokernel of (\ref{intersectionproduct}) is denoted by $\mathrm{CH}^2(X,1)_{\mathrm{ind}}$. 
For $\xi \in \mathrm{CH}^2(X,1)$, $\xi_{\mathrm{ind}}$ denotes its image in $\mathrm{CH}^2(X,1)_{\mathrm{ind}}$. 
A cycle $\xi$ is called \textit{indecomposable} if $\xi_{\mathrm{ind}}\neq 0$.

\subsection{The regulator map} \label{regulator_map}
By the canonical identification of $\mathrm{CH}^2(X,1)$ with the motivic cohomology $H^3_{\mathcal M}(X,\Z(2))$, there exists the map
\begin{equation}\label{regulatormap}
\mathrm{reg}\colon \mathrm{CH}^2(X,1) \to H^3_{\mathcal D}(X, \Q(2)) = \frac{H^2(X,\C)}{F^2H^2(X,\C)+H^2(X,\Q(2))}
\end{equation}
called the \textit{regulator map}.
The target $H^3_{\mathcal D}(X, \Q(2))$ denotes the Deligne cohomology of $X$. 
We recall an explicit formula for (\ref{regulatormap}) following \cite[pp.458--459]{Levine}. 

Let $X$ be a smooth projective surface over $\C$ such that $H_1(X,\Q)=0$.
By the Poincar\'e duality, the Deligne cohomology of $X$ is isomorphic to the generalized complex torus
\begin{equation}\label{Delignefunctional}
H^3_{\mathcal{D}}(X, \Q(2)) \simeq \frac{(F^1H^2(X,\C))^\vee}{H_2(X,\Q)}
\end{equation}
where $(F^1H^2(X,\C))^\vee$ is the dual $\C$-vector space of $F^1H^2(X,\C)$ and we regard $H_2(X,\Q)$ as a subgroup of $(F^1H^2(X,\C))^\vee$ by the integration. 
Under the isomorphism (\ref{Delignefunctional}), the image of the cycle $\xi$ represented by a formal sum $\sum_j (C_j, f_j)$ under the regulator map is described as follows.

Let $D_j$ be the normalization of the closed curve $C_j$ on $X$. 
Let $\mu_j\colon D_j\to X$ denote the composition of $D_j\to C_j$ and $C_j\to X$. 
We will define a topological 1-chain $\gamma_j$ on $D_j$.
If $f_j$ is a constant, we define $\gamma_j = 0$.
If $f_j$ is not a constant, we regard $f_j$ as a finite morphism from $D_j$ to $\P^1$. 
Then we define $\gamma_j = f_j^{-1}([\infty, 0])$ where $[\infty, 0]$ is a path on $\P^1$ from $\infty$ to $0$ along the positive real axis. 
By the condition $\sum_j \mathrm{div}_{C_j}(f_j) = 0$, $\gamma = \sum_j (\mu_j)_*\gamma_j$ satisfies $\partial \gamma=0$.
In this paper, $\gamma$ is called the \textit{1-cycle associated with $\xi$}.
Since $H_1(X, \Q) = 0$, there exists a 2-chain $\Gamma$ and $m\in \Z_{>0}$ such that $\partial\Gamma = m\cdot\gamma$.
Then the image of $\xi$ under the regulator map is represented by the pairing
\begin{equation}\label{regulatorformula}
\langle \mathrm{reg}(\xi), [\omega]\rangle = \frac{1}{m}\displaystyle\int_\Gamma\omega  + \sum_j\dfrac{1}{2\pi\sqrt{-1}}\displaystyle\int_{D_j-\gamma_j}\log (f_j)\mu_j^*\omega\quad ([\omega]\in F^1H^2(X,\C)).
\end{equation}
Here $\log(f_j)$ is the pull-back of the logarithmic function on $\P^1-[\infty,0]$ by $f_j$.

In this paper, we use the following variant of the regulator map.
\begin{dfn} \label{transreg} 
The \textit{transcendental regulator map} is the composite of the regulator map $(\ref{regulatormap})$ and the projection induced by $H^{2,0}(X) \hookrightarrow F^1H^2(X,\C)$. 
\begin{equation*}
r\colon \mathrm{CH}^2(X,1) \to \frac{F^1H^2(X,\C)^\vee}{H_2(X,\Q)} \twoheadrightarrow \frac{H^{2,0}(X)^\vee}{H_2(X,\Q)}. 
\end{equation*}
We denote this map by $r$. 
\end{dfn}

For a non-zero holomorphic 2-form $\omega$ on $X$, we define the $\Q$-linear subspace $\mathcal{P}(\omega)$ of $\C$ by
\begin{equation*}
\mathcal{P}(\omega) = \left\langle \displaystyle \int_{\Gamma}\omega  \:\: \middle| \:\:\Gamma \text{ is a topological 2-cycles on }X\right\rangle_\Q, 
\end{equation*}
i.e. $\mathcal{P}(\omega)$ is the set of periods of $X$ with respect to $\omega$. 
We denote $\C/\mathcal{P}(\omega)$ by $\mathcal{Q}(\omega)$.
By considering the pairing with $\omega$,
we have a surjective map $H^{2,0}(X)^\vee/H_2(X,\Q) \twoheadrightarrow \C/\mathcal{P}(\omega)=\mathcal{Q}(\omega)$.

By the formula \eqref{regulatorformula}, the image of $\xi \in \mathrm{CH}^2(X,1)$ under the transcendental regulator map is 
\begin{equation}\label{transregval}
\langle r(\xi), [\omega]\rangle  \equiv \frac{1}{m}\cdot\int_{\Gamma}\omega \mod \mathcal{P}(\omega).
\end{equation}
Here, $\Gamma$ is a 2-chain satisfying $\partial \Gamma = m\cdot \gamma$ where $\gamma$ is a 1-cycle associated with $\xi$.
If $\xi$ is decomposable, $r(\xi) = 0$. 
This implies the following.

\begin{prop}\label{transregprop}
If $r(\xi)\neq 0$, we have $\xi_{\mathrm{ind}}\neq 0$. In other words, the transcendental regulator map factors through $\mathrm{CH}^2(X,1)_{\mathrm{ind}}$.
\end{prop}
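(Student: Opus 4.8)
The plan is to establish the equivalent factorization statement: since $\mathrm{CH}^2(X,1)_{\mathrm{ind}}$ is by definition the cokernel of the intersection product map \eqref{intersectionproduct}, it suffices to show that $r$ annihilates every decomposable cycle. Once $r$ is known to vanish on the image of \eqref{intersectionproduct}, it descends to a well-defined homomorphism on the cokernel $\mathrm{CH}^2(X,1)_{\mathrm{ind}}$, and the contrapositive statement ``$r(\xi)\neq 0 \Rightarrow \xi_{\mathrm{ind}}\neq 0$'' is then immediate. So the whole content is the vanishing of $r$ on decomposables.

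By the bilinearity of \eqref{intersectionproduct}, I would first reduce to generators: it is enough to treat a cycle of the form $[C]\otimes\alpha$, with $C$ a prime divisor on $X$ and $\alpha\in\Gamma(X,\mathscr{O}_X^\times)=\mathrm{CH}^1(X,1)$, whose image in $\mathrm{CH}^2(X,1)$ is represented by $(C,\alpha|_C)$ in the presentation \eqref{formalsum}. The key observation is that $X$ is a smooth projective connected surface over $\C$, so $\Gamma(X,\mathscr{O}_X)=\C$ and hence $\Gamma(X,\mathscr{O}_X^\times)=\C^\times$. Thus $\alpha$ is a nonzero constant, and the defining function $\alpha|_C$ on $C$ is constant.

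Next I would feed this into the explicit description of the regulator. Because $\alpha|_C$ is constant, the construction of the associated $1$-chain assigns $\gamma_j=0$, so the $1$-cycle $\gamma$ associated with $[C]\otimes\alpha$ is zero; one may therefore take $\Gamma=0$ and $m=1$, and the first term $\tfrac{1}{m}\int_\Gamma\omega$ of \eqref{regulatorformula} vanishes. For the transcendental regulator one pairs only with holomorphic $2$-forms $\omega\in H^{2,0}(X)$, and for such $\omega$ the pullback $\mu_j^*\omega$ to the curve $D_j$ is a $(2,0)$-form on a $1$-dimensional complex manifold, hence identically zero; this kills the second term of \eqref{regulatorformula} as well. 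Consequently the functional $[\omega]\mapsto\langle r([C]\otimes\alpha),[\omega]\rangle$ is the zero functional on $H^{2,0}(X)$, so $r([C]\otimes\alpha)=0$ in $H^{2,0}(X)^\vee/H_2(X,\Q)$. By linearity $r$ vanishes on all decomposable cycles, as required.

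I do not expect a serious obstacle here; the statement is essentially a bookkeeping consequence of the explicit formulas \eqref{regulatorformula} and \eqref{transregval}. The one point that I would take care to articulate is that the value vanishes for two independent reasons on a decomposable generator: the constancy of $\alpha|_C$ trivializes the topological $1$-cycle and hence the first term, while the vanishing of $\mu_j^*\omega$ on a curve (for holomorphic $\omega$) trivializes the second term. Keeping both in view ensures that the resulting functional is genuinely zero in $H^{2,0}(X)^\vee/H_2(X,\Q)$, rather than merely zero modulo the period lattice $\mathcal{P}(\omega)$ for a single form.
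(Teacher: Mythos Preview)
Your argument is correct and follows exactly the route the paper indicates: the paper simply asserts ``If $\xi$ is decomposable, $r(\xi)=0$'' immediately before stating the proposition, and you have supplied the standard justification by reducing to generators $(C,\alpha|_C)$ with $\alpha\in\C^\times$ constant and observing that the associated $1$-cycle is then zero so that \eqref{transregval} gives $r(\xi)=0$. There is nothing to add.
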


\subsection{A relative setting}
Since it is difficult to prove non-vanishingness of an element of $H^{2,0}(X)^\vee/H_2(X,\Q)$, we use its relative version.
Let $\pi\colon{\mathcal X}\rightarrow S$ be a smooth family of projective surfaces over a complex manifold $S$. 
We define sheaves $\mathcal{P}$ and $\mathcal{Q}$ of $\Q$-linear spaces on $S$ by 
\begin{equation*}
\begin{aligned}
\mathcal{P} &= \mathrm{Im}(R^2\pi_*\underline{\Q}_ {\mathcal{X}}\to \mathcal{H}om_{\mathscr{O}^{\mathrm{an}}_{S}}(\pi_*\Omega^2_{\mathcal{X}/S}, \mathscr{O}^{\mathrm{an}}_{S})), \\
\mathcal{Q} &= \mathrm{Coker}(R^2\pi_*\underline{\Q}_ {\mathcal{X}}\to \mathcal{H}om_{\mathscr{O}^{\mathrm{an}}_{S}}(\pi_*\Omega^2_{\mathcal{X}/S}, \mathscr{O}^{\mathrm{an}}_{S})),
\end{aligned}
\end{equation*}
where $\underline{\Q}_{\mathcal{X}}$ is the constant sheaf of value $\Q$ on $\mathcal{X}$ and $\mathscr{O}^{\mathrm{an}}_{S}$ is the sheaf of holomorphic functions on $S$. 
Note that $\mathcal{P}$ is a local system on $S$.

For each non-zero relative 2-form $\omega \in \Gamma(\mathcal{X},\Omega^2_{\mathcal{X}/S})$,
we denote the image of $\mathcal{P}$ under the 
projection map
\begin{equation*}
\mathcal{H}om_{\mathscr{O}^{\mathrm{an}}_{S}}(\pi_*\Omega^2_{\mathcal{X}/S}, \mathscr{O}^{\mathrm{an}}_{S}) \twoheadrightarrow  \mathscr{O}^{\mathrm{an}}_{S}
\end{equation*}
by $\mathcal{P}(\omega)$.
The subsheaf $\mathcal{P}(\omega)$ is generated by period functions with respect to $\omega$. 
We also denote the quotient sheaf $\mathscr{O}^{\mathrm{an}}_{S}/\mathcal{P}(\omega)$ by $\mathcal{Q}(\omega)$.
For a local section $\varphi$ of $\mathscr{O}^{\mathrm{an}}_{S}$, its image in $\mathcal{Q}(\omega)$ is denoted by $[\varphi]$. 

For each $s\in S$, the evaluation map $\mathrm{ev}_s\colon \Gamma(S,\mathcal{Q}) \to H^{2,0}(\mathcal{X}_s)^\vee/H_2(\mathcal{X}_s,\Q)$ fits into the following commutative diagram:
\begin{equation*}
\begin{tikzcd}
\Gamma(S,\mathcal{Q}) \arrow[d] \arrow[r,"\mathrm{ev}_s"]& H^{2,0}(\mathcal{X}_s)^\vee/H_2(\mathcal{X}_s,\Q) \arrow[d,twoheadrightarrow] \\
\Gamma(S,\mathcal{Q}(\omega)) \arrow[r] & \C/\mathcal{P}(\omega_s) \arrow[r,equal]&[-40pt]  \mathcal{Q}(\omega_s). \\[-10pt]
{{[f]}} \arrow[r,mapsto] \aru& (f(s) \mod{\mathcal{P}(\omega_s)}) \aru
\end{tikzcd}
\end{equation*}
The following elementary lemma is crucial for the result.
\begin{lem}\label{basiclem}
For a non-zero element $\nu \in \Gamma(S,\mathcal{Q})$, we have $\mathrm{ev}_s(\nu)\neq 0$ for very general $s\in S$.
\end{lem}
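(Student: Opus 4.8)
The plan is to translate the vanishing of $\mathrm{ev}_s(\nu)$ into a concrete incidence condition between a holomorphic lift of $\nu$ and the period local system, and then to exhibit the exceptional locus as a countable union of proper analytic subsets. Write $\mathcal{H}:=\mathcal{H}om_{\mathscr{O}^{\mathrm{an}}_{S}}(\pi_*\Omega^2_{\mathcal{X}/S},\mathscr{O}^{\mathrm{an}}_{S})$, so that by the definitions of $\mathcal{P}$ and $\mathcal{Q}$ we have a short exact sequence $0\to\mathcal{P}\to\mathcal{H}\to\mathcal{Q}\to 0$. Here $\mathcal{H}$ is a holomorphic vector bundle, since $\pi_*\Omega^2_{\mathcal{X}/S}$ is locally free for a smooth projective family (the Hodge numbers being constant), and $\mathcal{P}$ is a local system of \emph{finite} rank $k$, because its stalk at $s$ is the image of the finite-dimensional space $H^2(\mathcal{X}_s,\Q)$. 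We may assume $S$ is connected. Passing to the universal cover $p\colon\widetilde{S}\to S$, the pullback $p^*\mathcal{P}$ is trivial and admits a global frame of flat — hence holomorphic — sections $\gamma_1,\dots,\gamma_k$ of $p^*\mathcal{H}$; moreover $H^1(\widetilde{S},p^*\mathcal{P})=0$ as $\widetilde{S}$ is simply connected, so $p^*\nu$ lifts to a single global holomorphic section $\widehat{\nu}\in\Gamma(\widetilde{S},p^*\mathcal{H})$, well defined modulo $\sum_{i}\Q\,\gamma_i$.

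With this in hand, $\mathrm{ev}_x(p^*\nu)=0$ at a point $x\in\widetilde{S}$ if and only if $\widehat{\nu}(x)$ lies in the $\Q$-span $\sum_i\Q\,\gamma_i(x)$, that is, if and only if $x\in Z_c:=\{\,x\in\widetilde{S}:\widehat{\nu}(x)=\sum_{i=1}^{k}c_i\gamma_i(x)\,\}$ for some $c=(c_1,\dots,c_k)\in\Q^k$. Each $Z_c$ is the zero locus of the holomorphic section $\widehat{\nu}-\sum_i c_i\gamma_i$ of the bundle $p^*\mathcal{H}$, hence an analytic subset of $\widetilde{S}$, and the exceptional locus pulled back to $\widetilde{S}$ is the countable union $\bigcup_{c\in\Q^k}Z_c$.

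The key step — and the only place the hypothesis $\nu\neq 0$ is used — is to show that every $Z_c$ is a \emph{proper} analytic subset. If $Z_c=\widetilde{S}$ for some $c$, then $\widehat{\nu}=\sum_i c_i\gamma_i$ identically, so $p^*\nu=0$ and therefore $\nu=0$, contradicting the hypothesis (note $p^*$ is injective on global sections since $p$ is a surjective local biholomorphism). This is where I invoke that $\widetilde{S}$ is connected together with the identity theorem: a holomorphic section of $p^*\mathcal{H}$ vanishing on a nonempty open set vanishes identically, so $\nu$ cannot be invisible on a single chart while being globally nonzero. I expect this properness/connectedness point to be the main (though mild) obstacle of the argument.

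Finally I would descend and conclude. A countable union of proper analytic subsets of the connected manifold $\widetilde{S}$ has dense complement of full measure. Since the condition $\mathrm{ev}_s(\nu)=0$ is intrinsic to $S$, on any evenly covered chart $V\subseteq S$ the set $\{s\in V:\mathrm{ev}_s(\nu)=0\}$ is read off from a single sheet $V_0\subseteq\widetilde{S}$, hence equals $\bigcup_{c}(Z_c\cap V_0)$ transported by the biholomorphism $p|_{V_0}$; each $Z_c\cap V_0$ is proper analytic in $V_0$ by the identity theorem applied again. Covering the second-countable manifold $S$ by countably many such charts then exhibits $\{s\in S:\mathrm{ev}_s(\nu)=0\}$ as contained in a countable union of proper analytic subsets of $S$, so its complement is very general, as claimed.
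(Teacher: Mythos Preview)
Your proof is correct and follows the same essential strategy as the paper: both arguments exhibit the exceptional locus as a countable union of proper analytic subsets, each being the zero set of a holomorphic lift of $\nu$ minus a $\Q$-combination of period sections. The paper's proof differs in two technical simplifications. First, rather than working in the full bundle $\mathcal{H}$, the paper pairs $\nu$ with a single relative $2$-form $\omega$ chosen so that $\langle\nu,\omega\rangle\ne 0$ in $\mathcal{Q}(\omega)$; this reduces the problem to scalar holomorphic functions in $\mathscr{O}_S^{\rm an}$, so the $Z_c$ are zero sets of functions $\varphi-\sum c_if_i$ rather than of bundle sections. Second, instead of passing to the universal cover and then descending, the paper simply shrinks $S$ in the classical topology to trivialize the local system $\mathcal{P}(\omega)$ from the start, noting that the question is local. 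Your route is more global and conceptual, at the cost of the extra descent argument; the paper's is more elementary and sidesteps that step entirely.
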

\begin{proof}
Since the question is local, we may shrink $S$ in the sense of the classical topology. 
Let $\nu\in \Gamma(S,\mathcal{Q})$ be a non-zero element.
Since $\nu$ is non-zero, there exists a relative 2-form $\omega \in \Gamma(\mathcal{X},\Omega_{\mathcal{X}/S}^2)$ such that $\langle \nu, \omega\rangle \in \mathscr{O}_S^{\mathrm{an}}/\mathcal{P}(\omega) = \mathcal{Q}(\omega)$ is non-zero.
We may assume that there exist a holomorphic function $\varphi\in \Gamma(S,\mathscr{O}_S^{\mathrm{an}})$ such that $\langle \nu, \omega\rangle = [\varphi]$ and a free basis $f_1,f_2,\dots,f_r$ of $\Gamma(S,\mathcal{P}(\omega))$.

For each $\underline{c} = (c_i) \in \Q^r$, we define the holomorphic function $F_{\underline{c}}$ by 
\begin{equation*}
F_{\underline{c}} = \varphi - \sum_{i=1}^r c_if_i.
\end{equation*}
Since $[\varphi]$ is non-zero in $\mathcal{Q}(\omega)$, $F_{\underline{c}}$ is a non-zero holomorphic function for each $\underline{c} \in \Q^r$.
Consider the countable family $\{F_{\underline{c}}\}_{\underline{c}\in \Q^r}$ of the holomorphic functions.
Then outside of the zeros of the functions in this family, $\varphi(s)\not\in \langle f_1(s),f_2(s),\dots, f_r(s) \rangle_\Q = \mathcal{P}(\omega_s)$.
Hence $\mathrm{ev}_s(\nu)\neq 0$.
\end{proof}

Finally, we consider the regulator map in the relative setting.
Suppose that we have irreducible divisors $\mathcal{C}_j$ on $\mathcal{X}$ which are smooth over $S$ and non-zero rational functions $f_j$ on $\mathcal{C}_j$ whose zeros and poles are also smooth over $S$.
Assume that they satisfy the condition $\sum_j\mathrm{div}_{(\mathcal{C}_j)_s}((f_j)_s) = 0$ for each $s\in S$.
Then we have a family of higher Chow cycles $\xi = \{\xi_s\}_{s\in S}$ of type $(2,1)$ such that $\xi_s\in \mathrm{CH}^2(\mathcal{X}_s,1)$ is represented by the formal sum $\sum_{j}((\mathcal{C}_j)_s,(f_j)_s)$.
A family of higher Chow cycles of type $(2,1)$ constructed in this way is called an \textit{algebraic family of higher Chow cycles} in this paper.

If we shrink $S$ in the sense of the classical topology, there exists a $C^\infty$-family of topological 2-chains $\{\Gamma_s\}_{s\in S}$ such that $\Gamma_s$ is a 2-chain associated with $\xi_s$.
If we fix a relative 2-form $\omega$, the function 
\begin{equation*}
S\ni s \mapsto \int_{\Gamma_s}\omega_s \in \C
\end{equation*}
is holomorphic (cf. \cite[Proposition 4.1]{CL}).
Hence we can define the element $\nu_{\mathrm{tr}}(\xi) \in \Gamma(S,\mathcal{Q})$ such that 
\begin{equation*}
\mathrm{ev}_s(\nu_{\mathrm{tr}}(\xi)) = r(\xi_s)
\end{equation*}
for every $s\in S$.
This $\nu_{\mathrm{tr}}(\xi)$ can be regarded as a part of the normal function associated with $\xi$.

\section{The family of surfaces} \label{SurfaceFamily}
Let $R_0=\P^1 \setminus \{0, 1, \infty\}$ and $A_0=\C\left[\l, \frac1{\l(1-\l)} \right]$ be the coordinate ring of $R_0$. 
Let $f_i \colon C_i \to R_0$ $(i=1, 2)$ be a family of smooth projective curves over $R_0$, whose general fiber $(C_i)_{\l}=f_i^{-1}(\l)$ is birational to the singular curve $$y_i^N=x_i^{a_i} (1-x_i)^{b_i} (1- \l x_i)^{c_i}, $$ where $a_i, b_i, c_i \in \{1, \ldots, N-1\}$ and $\gcd(N, a_i, b_i, c_i)=1$. 
We have the natural morphism $C_i \to \P^1 \times R_0$ defined by $(x, y) \mapsto x$.  

Let $T_0$ be the (Zariski) open set of $R_0 \times R_0$ defined by 
$$T_0=\left\{ (\l_1, \l_2) \in R_0 \times R_0 \:\: \middle| \:\:  \l_1\neq \l_2, 1-\l_2, \frac1{\l_2}, \frac1{1-\l_2}, \frac{\l_2-1}{\l_2}, \frac{\l_2}{\l_2-1}\right\}$$
and $B_0$ be the coordinate ring of $T_0$. 
We have the family of surfaces $C_1\times C_2\rightarrow R_0\times R_0$.
We use the same symbol $C_1\times C_2$ for its restriction to $T_0$.
Let $\mathscr{L}$ be the line bundle on $\P^1 \times \P^1 \times T_0$ defined by
\begin{equation*}
\mathscr{L} = pr_1^*\Omega_{\P^1\times R_0/R_0}^1\otimes pr_2^*\Omega_{\P^1\times R_0/R_0}^1.
\end{equation*}
Let $h$ be the global section of $\mathscr{L}^{\otimes(-mN)}$ defined by 
\begin{equation*}
h = x^{a_1}y^{a_2}(1-x)^{b_1}(1-y)^{b_2}(1-\l_1 x)^{c_1} (1-\l_2 y)^{c_2}(dx\otimes dy)^{-mN}
\end{equation*}
where $m$ is an integer greater than\footnote{This condition is necessary for the ramification indices along $x=\infty$ and $y=\infty$ to be positive.} $\max\{\frac{a_1+b_1+c_1}{N},\frac{a_2+b_2+c_2}{N}\}$ and $x,y$ are inhomogeneous coordinates on $\P^1 \times \P^1 \times T_0$.
We define $\overline{S}$ as the cyclic covering of degree $N$ associated with $(\mathscr{L},h)$, i.e., $\overline{S}$ is the relative spectrum of an $\mathscr{O}_{\P^1\times \P^1\times T_0}$-algebra $\bigoplus_{i=0}^{N-1}\mathscr{L}^{\otimes i}$, where the ring structure is defined by 
\begin{equation*}
u\cdot v = \begin{cases}
u\otimes v & (i+j<N)\\
u\otimes v\otimes h & (i+j\geq N)
\end{cases}
\end{equation*}
where $u$ and $v$ are local sections of $\mathscr{L}^{\otimes i}$ and $\mathscr{L}^{\otimes j}$, respectively.

An affine model of $\overline{S}$ is given by
\begin{equation}\label{Sbaraffine}
z^N=x^{a_1}y^{a_2}(1-x)^{b_1}(1-y)^{b_2}(1-\l_1 x)^{c_1} (1-\l_2 y)^{c_2}
\end{equation}
and let $S \to \overline{S}$ be the minimal resolution of singularities.
Let $\mu_N$ be the group of $N$th roots of unity. 
Then $\mu_N$ acts on $C_1 \times C_2$ by 
$$((x_1, y_1), (x_2, y_2)) \mapsto ((x_1, \zeta^{-1} y_1), (x_2, \zeta y_2)), \quad \zeta \in \mu_N. $$
We define the morphism $C_1\times C_2 \to \overline{S}$ by
\begin{equation}\label{CtoS}
((x_1, y_1), (x_2, y_2)) \mapsto (x,y,z) = (x_1,x_2,y_1y_2).
\end{equation}
Then this morphism induces the birational morphism from the quotient $C_1\times C_2/\mu_N$ to $\overline{S}$.
Let $\widetilde{C_1 \times C_2}/\mu_N \to C_1 \times C_2/\mu_N$ be a resolution of singularities and $\widetilde{C_1\times C_2}$ be the fiber product of $C_1\times C_2$ and $\widetilde{C_1 \times C_2}/\mu_N$ over $C_1\times C_2/\mu_N$.
Since $\widetilde{C_1 \times C_2}/\mu_N$ is birational to $\overline{S}$ and $S\to \overline{S}$ is minimal, we have the birational morphism $\widetilde{C_1 \times C_2}/\mu_N\to S$, and this morphism is a composition of blowing-ups.
Then, we have the following commutative diagram: 
\begin{equation}\label{diagram}
  \xymatrix{
     & \ar[ld]_{\eqref{CtoS}} C_1 \times C_2 \ar[d]^{N:1\ \text{cover}} &  \ar[l]_{\text{blow-up}}  \widetilde{C_1 \times C_2} \ar[d]^{N:1\ \text{cover}}   \\
    \overline{S} \ar[d]& \ar[l] C_1 \times C_2/\mu_N & \ar[l]^{\text{blow-up}}  \widetilde{C_1 \times C_2}/\mu_N \ar[d]^{\text{blow-up}} \\
    \P^1 \times \P^1 \times T_0 & &  \ar[llu]^{\text{blow-up}}  \ar[ll] S. 
  }
\end{equation}

In the rest of the paper, we suppose that $$
a_1=b_1=c_1=A, \quad a_2=b_2=c_2=N-A, 
$$ where $A$ is a positive integer satisfying the following conditions.
\begin{align}
    &\gcd(N, A)=1, \label{gcdassumption}\\
    &(N+1)/3 \leq A  \leq (2N-1)/3. \label{rangeassumption}
\end{align}

We define local charts on $\P^1 \times \P^1 \times T_0$ and $S$. 
Let $U=\spec B_0[x, y] \subset \P^1 \times \P^1 \times T_0$ be the affine open subset which is the complement of the divisors $x=\infty$ and $y=\infty$. 
The inverse image of $U$ by $S \to \P^1 \times \P^1 \times T_0$ is covered by two open affine subschemes: 
\begin{align*}
V&=\spec B_0[x, y, v]/(v^Nf(x) -g(y)),  \\
W&=\spec B_0[x, y, w]/(f(x) - w^Ng(y)),  
\end{align*}
where $f(x)=x(1-x)(1-\l_1 x)$ and $g(y)=y(1-y)(1-\l_2 y)$. 
These two subsets are glued by the relation $v=1/w$. 
Then the map $V \cup W$ to the affine model \eqref{Sbaraffine} of $\overline{S}$ is given by 
\begin{align*}
& V \to \overline{S}; \quad (x, y, v) \mapsto (x, y, z)=(x, y, v^{N-A}x(1-x)(1-\l_1 x)),  \\ 
& W \to \overline{S}; \quad (x, y, w) \mapsto (x, y, z)=(x, y, w^Ay(1-y)(1 -\l_2 y)).  
\end{align*}
By these equations, we have only one exceptional curve on $S$ above $(x,y)=(0,0)$ (resp. $(1,1),(0,1)$) and denote it by $Q_{(0,0)}$ (resp. $Q_{(1,1)},Q_{(0,1)}$).

\section{Construction of initial higher Chow cycles} \label{ConstChow}
 
\subsection{Construction of families of higher Chow cycles}
Let 
$$B= B_0[\sqrt[N]{\l_1}, \sqrt[N]{\l_2}, \sqrt[N]{1-\l_1}, \sqrt[N]{1-\l_2}]$$ and $T \to T_0$ be the finite \'etale cover corresponding to $B_0 \to B$. 
We use the same notations of the base changes of the schemes in the diagram \eqref{diagram} by $T \to T_0$. 
Let $D \subset \P^1 \times \P^1 \times T$ be the closed subschemes defined by the local equation $x=y$, and $Z$ be its pull-back by $S \to \P^1 \times \P^1 \times T$. 
On the local chart $V$, $Z \hookrightarrow S$ is described by the following ring homomorphism 
$$B[x, y, v]/(v^Nf(x)-g(y)) \to B [z, v]/(v^N(1-\l_1 z)-(1-\l_2z)); \quad (x, y, v) \mapsto (z, z, v). $$
The curves $Z$ and $Q_{(0, 0)}$ intersect at $N$-points: 
$$Q_{(0, 0)} \cap Z=\{(x, y, v)=(0, 0, \zeta^i) \mid i=0, \ldots, N-1 \}. $$
The curves $Z$ and $Q_{(1, 1)}$ intersect at $N$-points: 
$$Q_{(1, 1)} \cap Z= \{(x, y, v)=(1, 1,  \zeta^i \sqrt[N]{1-\l_2}/\sqrt[N]{1-\l_1}) \mid i =0, \ldots, N-1\}. $$ 

We define rational functions $\psi^{i}_{\bullet} \in \C(Z)$ and $\varphi^{i}_{\bullet} \in \C(Q_{(\bullet, \bullet)})$ $(\bullet \in \{0, 1\}, i \in \Z/N\Z)$ by the following equations: 
\begin{align*}
& \psi_0^{(i)}=(v-\zeta^{i+1})\cdot (v-\zeta^{i})^{-1},   \\
& \varphi^{(i)}_0=(v-\zeta^i)\cdot (v-\zeta^{i+1})^{-1},  \\
& \psi^{(i)}_1=\left(v-\zeta^{i+1} \dfrac{\sqrt[N]{1-\l_2}}{\sqrt[N]{1-\l_1}} \right) \cdot \left(v- \zeta^i \dfrac{\sqrt[N]{1-\l_2}}{\sqrt[N]{1-\l_1}} \right)^{-1},  \\
&\varphi^{(i)}_1=\left(v- \zeta^{i}\dfrac{\sqrt[N]{1-\l_2}}{\sqrt[N]{1-\l_1}} \right) \cdot \left(v-\zeta^{i+1} \dfrac{\sqrt[N]{1-\l_2}}{\sqrt[N]{1-\l_1}} \right)^{-1}.  
\end{align*}
\begin{dfn} \label{cycledef}
For $i \in \Z/N\Z$, 
define algebraic families of higher Chow cycles $\xi^{(i)}_{\bullet} =\{(\xi^{(i)}_{\bullet})_t\}_{t \in T} \in \ch^2(S, 1)$ by 
\begin{align*}
&(\xi_0^{(i)})_t=(Z_t, \psi_0^{(i)}) +((Q_{(0, 0)})_t, \varphi_0^{(i)}), \\
&(\xi_1^{(i)})_t=(Z_t, \psi_1^{(i)}) +((Q_{(1, 1)})_t, \varphi_1^{(i)}),  
\end{align*}
where $Z_t$ and $(Q_{(\bullet,\bullet)})_t$ denote the fiber of $Z$ and $Q_{(\bullet,\bullet)}$ over $t\in T$.

\end{dfn}

\section{Computation of the regulator} \label{computeReg}
Put $\mathcal{A}=C_1 \times C_2$. 
Then we have  $T$-morphisms 
$$\mathcal{A} \xleftarrow[\text{blowing-up}]{p} \widetilde{\mathcal{A}} \xrightarrow[N:1 \ \text{quotient}]{\pi} S. $$
We name the morphisms $p$ and $\pi$ as above. 
Let $pr_i \colon C_1 \times C_2 \to C_i$ be the $i$th projection.  
Let $\omega_1$ (resp. $\omega_2$) be the $1$-form on $C_1$ (resp. $C_2$) over $T$ defined by 
\begin{align*}
&\omega_1=\dfrac{dx_1}{y_1} \quad \left(\text{resp. } \omega_2=\dfrac{dx_2}{y_2}\right), 
\end{align*}
which is a holomorphic $1$-form on $C_1$ (resp. $C_2$) over $T$ by the assumption \eqref{rangeassumption} (see \cite[Section 6.1]{{Archinard}}).  
We have the relative $2$-form $pr_1^* (\omega_{1}) \wedge pr_2^*(\omega_{2})$ on $\mathcal{A} \to T$. 
Then its pull-back by $p$ is stable under the covering transformation of $\pi$, hence it descends to $S \to T$.  
We denote the resulting $2$-form on $S$ by $\omega$, which is described as 
$$\omega=\frac{dx \wedge dy}{v^{N-A} f(x)}=\frac{dx \wedge dy}{w^Ag(y)}$$
on the local charts $V$ and $W$, respectively. 

\begin{prop} \label{PF}
Let $\omega$
be the above relative $2$-form on the family $S \to T$. 
Let $\mathscr{ D}_{\l_i} \colon \mathscr{O}_T^{\rm an} \to \mathscr{O}_T^{\rm an}$ $(i=1, 2)$ be the differential operator defined by 
\begin{align*}
&\mathscr{D}_{\l_1}=\l_1(1-\l_1) \dfrac{\partial^2}{\partial \l_1^2} + 2\left(1-\frac{A}{N} - \l_1 \right) \dfrac{\partial}{\partial \l_1} -\frac{A}{N}\left(1-\frac{A}{N}\right), \\
&\mathscr{D}_{\l_2}=\l_2(1-\l_2) \dfrac{\partial^2}{\partial \l_2^2} +2\left(\frac{A}{N} - \l_2 \right) \dfrac{\partial}{\partial \l_2} -\frac{A}{N}\left(1-\frac{A}{N}\right). 
\end{align*}
Let $\mathscr{D}=
\begin{pmatrix} 
\mathscr{D}_{\l_1} \\
\mathscr{D}_{\l_2} 
\end{pmatrix}
\colon \mathscr{O}_T^{\rm an} \to ( \mathscr{O}_T^{\rm an})^{\oplus 2}$. 
Then for any local section $f$ of $\mathcal{P}({\omega}) \subset \mathscr{O}^{\rm an}_T$, we have $\mathscr{D}(f)=0$, which induces the morphism 
$\mathcal{Q}({\omega}) \to (\mathscr{O}^{\rm an}_T)^{\oplus 2}$. 
\end{prop}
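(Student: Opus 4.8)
The plan is to reduce the assertion to the classical Picard--Fuchs equations for the periods of the holomorphic $1$-forms $\omega_1,\omega_2$ on the two hypergeometric curves, and then to exploit the product structure of $\omega$ together with separation of variables.

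First I would unwind the definition of $\mathcal{P}(\omega)$: locally it is the $\Q$-subspace of $\mathscr{O}_T^{\mathrm{an}}$ generated by the period functions $t\mapsto \int_{\Gamma_t}\omega$, where $\{\Gamma_t\}$ runs over locally constant families of topological $2$-cycles on the fibres $S_t$. Since the operators $\mathscr{D}_{\l_1},\mathscr{D}_{\l_2}$ have constant coefficients (hence are $\C$-linear), it suffices to show $\mathscr{D}(P)=0$ for each such generator $P$. Using the diagram \eqref{diagram} and the morphisms $\mathcal{A}\xleftarrow{p}\widetilde{\mathcal{A}}\xrightarrow{\pi}S$, I would transport the computation to $\mathcal{A}=C_1\times C_2$. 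Because $\pi^*\omega=p^*(pr_1^*\omega_1\wedge pr_2^*\omega_2)$, with $\pi$ of degree $N$ and $p$ a composition of blow-ups, one has $\int_{\Gamma_t}\omega=\tfrac1N\int_{p_*\pi^{-1}\Gamma_t}pr_1^*\omega_1\wedge pr_2^*\omega_2$, so every generator of $\mathcal{P}(\omega)$ is a rational multiple of a period of the product form on $(C_1\times C_2)_t$. Here the exceptional curves produced by $p$ and the resolution contribute no periods, since the restriction of a holomorphic $2$-form to a rational curve vanishes.

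Next I would use the Künneth decomposition of $H_2((C_1\times C_2)_t)$. The product form $pr_1^*\omega_1\wedge pr_2^*\omega_2$ pairs to zero with the summands $H_0\otimes H_2$ and $H_2\otimes H_0$ (on a slice $\{x_1\}\times C_2$ the factor $pr_1^*\omega_1$ restricts to $0$, and symmetrically), so only products $\gamma_1\times\gamma_2$ with $\gamma_i\in H_1((C_i)_{\l_i})$ matter, and for these Fubini gives the factorization
\begin{equation*}
\int_{\gamma_1\times\gamma_2}pr_1^*\omega_1\wedge pr_2^*\omega_2=\Bigl(\int_{\gamma_1}\omega_1\Bigr)\Bigl(\int_{\gamma_2}\omega_2\Bigr)=:P_1(\l_1)\,P_2(\l_2),
\end{equation*}
with $P_1$ depending only on $\l_1$ and $P_2$ only on $\l_2$. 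Writing $\omega_1=x_1^{-A/N}(1-x_1)^{-A/N}(1-\l_1x_1)^{-A/N}\,dx_1$, the period $P_1$ is an Euler integral equal to a constant times ${}_2F_1\!\left(\tfrac{A}{N},1-\tfrac{A}{N};2\bigl(1-\tfrac{A}{N}\bigr);\l_1\right)$, whose Gauss hypergeometric operator is exactly $\mathscr{D}_{\l_1}$; likewise $\omega_2=x_2^{A/N-1}(1-x_2)^{A/N-1}(1-\l_2x_2)^{A/N-1}\,dx_2$ gives $P_2$ as a constant times ${}_2F_1\!\left(\tfrac{A}{N},1-\tfrac{A}{N};2\tfrac{A}{N};\l_2\right)$, annihilated by $\mathscr{D}_{\l_2}$. (Alternatively one checks these two second-order equations directly, differentiating under the integral sign and integrating by parts; the boundary terms vanish because $A/N\in(0,1)$ by \eqref{rangeassumption}.) Since $P_1$ is independent of $\l_2$ and $P_2$ of $\l_1$, separation of variables yields $\mathscr{D}_{\l_1}(P_1P_2)=P_2\,\mathscr{D}_{\l_1}(P_1)=0$ and $\mathscr{D}_{\l_2}(P_1P_2)=P_1\,\mathscr{D}_{\l_2}(P_2)=0$. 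Hence $\mathscr{D}$ annihilates every generator of $\mathcal{P}(\omega)$, and passing to the quotient gives the induced map $\mathcal{Q}(\omega)=\mathscr{O}_T^{\mathrm{an}}/\mathcal{P}(\omega)\to(\mathscr{O}_T^{\mathrm{an}})^{\oplus2}$.

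I expect the main obstacle to be the bookkeeping in the reduction step: justifying carefully that passing through the blow-ups $p$ and the $N:1$ quotient $\pi$ in \eqref{diagram} identifies $\mathcal{P}(\omega)$ with the period functions of the product form on $C_1\times C_2$ without gaining or losing classes---in particular that the exceptional and resolution divisors contribute no periods of the holomorphic form $\omega$. Once the periods are known to factor as $P_1(\l_1)P_2(\l_2)$, the hypergeometric identification and the separation-of-variables conclusion are routine.
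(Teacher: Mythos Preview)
Your argument is correct and follows the same overall plan as the paper: transport the period $\int_{\Gamma_t}\omega$ to a period of $pr_1^*\omega_1\wedge pr_2^*\omega_2$ on $\mathcal{A}_t=C_1\times C_2$ via the maps $p$ and $\pi$ (the paper packages this as the Gysin map $\phi=\pi_!\circ p^*$ and its dual $\phi^\vee$ on homology, which is the cleaner way to handle the ``bookkeeping'' you flag as the main obstacle), and then invoke the Picard--Fuchs equations for $\omega_1,\omega_2$.

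The one genuine difference is in the final step. You pass through the K\"unneth decomposition to write the transported period as a $\Q$-linear combination of products $P_1(\l_1)P_2(\l_2)$ and then kill each factor with the hypergeometric operator. The paper bypasses K\"unneth entirely: it simply observes that $\mathscr{D}_{\l_1}\omega_1$ is an \emph{exact} $1$-form on $C_1$, hence $\mathscr{D}_{\l_1}(pr_1^*\omega_1\wedge pr_2^*\omega_2)=pr_1^*(\mathscr{D}_{\l_1}\omega_1)\wedge pr_2^*\omega_2$ is exact on the product, so its integral over \emph{any} $2$-cycle vanishes. This is shorter and avoids the K\"unneth bookkeeping; your route is more explicit about the hypergeometric identification, which is not needed here but is informative. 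One small slip: the operators $\mathscr{D}_{\l_i}$ do \emph{not} have constant coefficients (they visibly involve $\l_i$); what you need, and what is true for any linear differential operator, is $\C$-linearity, so the reduction to generators still stands.
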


\begin{proof}
For $t \in T$, let $\phi$ be the morphism of Hodge structures defined by 
$$\phi \colon H^2(\mathcal{\mathcal{A}}_t) \xrightarrow{p^*} H^2(\widetilde{\mathcal{A}}_t) \xrightarrow{\pi_!} H^2(S_t), $$
where $\pi_!$ is the Gysin morphism induced by $\pi$. 
Since the degree of $\pi$ is $N$, we have $\pi_! \circ \pi^* \colon H^2(S_t) \to H^2(S_t)$ equals multiplication $N$ (cf. \cite[Remark 7.29]{Voi}).   

Let $f \in \mathcal{P}(\omega)$ be a local section. Then there exists $[\Gamma_t] \in H_2(S_t, \Z)$ such that 
$$f=\int_{\Gamma_t} \omega_t. $$
Since $\pi^*(\omega)=pr_1^*(\omega_1) \wedge pr_1^*(\omega_2)$, we have 
$$\phi([pr_1^*(\omega_1) \wedge pr_1^*(\omega_2)]_t)=N[\omega_t]. $$
Therefore, we obtain 
\begin{align} \label{period}
\int_{\Gamma_t} \omega_t=\frac1N \int_{\phi^{\vee}(\Gamma_t)}(pr_1^*(\omega_1) \wedge pr_2^*(\omega_2))_t, 
\end{align}
where $\phi^{\vee} \colon H_2(S_t) \to H_2(\mathcal{A}_t)$ is the dual of $\phi$. 
The right-hand side of \eqref{period} vanishes under the action $\mathscr{D}_{\l_1}$ and $\mathscr{D}_{\l_2}$ since $\mathscr{D}_{\l_1} \omega_1$ and $\mathscr{D}_{\l_2} \omega_2$ are exact forms, hence the left-hand side also vanishes under the action $\mathscr{D}$.  
Hence the assertion follows. 
\end{proof}

\subsection{Construction of topological chains}
In this subsection, we construct a topological 2-chain for computing the images of $\xi_{1}^{(0)}-\xi_0^{(0)}$ under the transcendental regulator map.
To use the results in Section \ref{HigherChow}, we should prove the following.
\begin{prop}\label{h1zero}
For each $t\in T$, the surface $S_t$ satisfies $H^1(S_t,\Q)=0$.
\end{prop}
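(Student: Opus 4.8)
The plan is to compare the cohomology of $S_t$ with that of the product $\mathcal{A}_t=(C_1)_t\times (C_2)_t$ by means of the maps $\mathcal{A}_t\xleftarrow{p}\widetilde{\mathcal{A}}_t\xrightarrow{\pi}S_t$ introduced in Section \ref{computeReg}, and then to show that the $\mu_N$-invariant part of $H^1(\mathcal{A}_t,\Q)$ already vanishes. Throughout, cohomology is taken with $\Q$-coefficients, so that the $|\mu_N|=N$ is invertible.

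First I would observe that $\pi^*\colon H^1(S_t,\Q)\to H^1(\widetilde{\mathcal{A}}_t,\Q)$ is injective. Indeed, exactly as in the proof of Proposition \ref{PF}, one has $\pi_!\circ\pi^*=N\cdot\mathrm{id}$ on $H^*(S_t)$, and $N$ is invertible in $\Q$. Moreover, since $\pi$ is the quotient map for the $\mu_N$-action and therefore satisfies $\pi\circ\sigma=\pi$ for every $\sigma\in\mu_N$, the image of $\pi^*$ lies in the invariant subspace $H^1(\widetilde{\mathcal{A}}_t,\Q)^{\mu_N}$. Hence it suffices to prove $H^1(\widetilde{\mathcal{A}}_t,\Q)^{\mu_N}=0$. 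Because $p$ is a composition of blow-ups with $\mu_N$-invariant centers, and blowing up a smooth surface alters only $H^2$, the map $p^*$ is a $\mu_N$-equivariant isomorphism on $H^1$, so that $H^1(\widetilde{\mathcal{A}}_t,\Q)^{\mu_N}\cong H^1(\mathcal{A}_t,\Q)^{\mu_N}$.

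Next I would compute these invariants by the Künneth formula, which yields a $\mu_N$-equivariant decomposition
\[
H^1(\mathcal{A}_t,\Q)\cong H^1((C_1)_t,\Q)\oplus H^1((C_2)_t,\Q),
\]
where $\mu_N$ acts on each summand through its action on the corresponding curve. Therefore
\[
H^1(\mathcal{A}_t,\Q)^{\mu_N}\cong H^1((C_1)_t,\Q)^{\mu_N}\oplus H^1((C_2)_t,\Q)^{\mu_N}.
\]
The projection $(x_i,y_i)\mapsto x_i$ realizes $(C_i)_t$ as the $N$-cyclic $\mu_N$-cover of the $x$-line, so that $(C_i)_t/\mu_N\cong\P^1$. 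By the transfer argument for quotients by finite groups with $\Q$-coefficients, $H^1((C_i)_t,\Q)^{\mu_N}\cong H^1((C_i)_t/\mu_N,\Q)=H^1(\P^1,\Q)=0$. Combining the identifications above gives $H^1(S_t,\Q)=0$.

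The argument is essentially formal once these three inputs are assembled; the only point requiring care is the comparison between $S_t$ and the product $\mathcal{A}_t$, namely that neither the resolution encoded in $\pi$ nor the blow-ups $p$ create new first cohomology. This is governed by the two standard facts that blow-ups of smooth surfaces preserve $H^1$ and that $\pi^*$ injects into the $\mu_N$-invariants, so I do not anticipate a genuine obstacle. The assumption $N\neq 2$ plays no role here; the vanishing holds for all $t\in T$ and every admissible $N$.
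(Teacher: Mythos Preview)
Your proof is correct and follows essentially the same route as the paper: inject $H^1(S_t,\Q)$ into the $\mu_N$-invariants of $H^1(\widetilde{\mathcal{A}}_t,\Q)$ via $\pi_!\circ\pi^*=N$, pass to $H^1(\mathcal{A}_t,\Q)^{\mu_N}$ since blow-ups do not affect $H^1$, and apply K\"unneth. The only difference is in the last step: the paper invokes Archinard's explicit eigenspace decomposition of $H^1((C_i)_t,\Q)$ to see that the trivial eigenspace is zero, whereas you use the cleaner observation that $(C_i)_t/\mu_N\cong\P^1$ together with transfer, which gives $H^1((C_i)_t,\Q)^{\mu_N}\cong H^1(\P^1,\Q)=0$ directly.
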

\begin{proof}
Consider the morphism of singular cohomologies 
\begin{equation*}
H^1(S_t,\Q) \xrightarrow{\pi^*} H^1(\widetilde{\mathcal{A}}_t,\Q) \xrightarrow{\pi_!} H^1(S_t,\Q).
\end{equation*}
This map coincides with the multiplication by $N$.
Thus it is enough to show that $\mathrm{Im}(\pi^*)=0$.
Since $\mathrm{Im}(\pi^*)$ is contained in the $\mu_N$-invariant part $H^1(\widetilde{\mathcal{A}}_t,\Q)^{\mu_N}$, we will show that $H^1(\widetilde{\mathcal{A}}_t,\Q)^{\mu_N}=0$.
The blowing-up does not affect $H^1$, it suffices to show that $H^1(\mathcal{A}_t,\Q)^{\mu_N}=0$.
We have the K\"unneth decomposition 
\begin{equation}\label{Kunneth}
H^1(\mathcal{A}_t,\Q)=H^1(C_1,\Q)\otimes H^0(C_2,\Q)\oplus H^0(C_1,\Q)\otimes H^1(C_2,\Q)
\end{equation}
By the explicit description of the $\mu_N$-action on $H^1(C_i,\Q)$ in \cite{Archinard}, the invariant part of right-hand side of \eqref{Kunneth} is trivial.
Thus we have $H^1(\mathcal{A}_t,\Q)^{\mu_N}=0$.
\end{proof}

For a while, we fix $t\in T$ so that $\lambda_i$ satisfies $0<\lambda_i<1$ and $\sqrt[N]{\lambda_i}, \sqrt[N]{1-\lambda_i}$ coincides with the ordinary $N$th root for positive numbers for $i=1,2$.

Let $\triangle$ be the locally closed subset of $\R^2$ defined by
\begin{equation*}
\triangle = \{(x,y)\in \R^2\mid 0< y\le x < 1\}.
\end{equation*}
We regard $\triangle$ as a subset of $\P^1\times \P^1$.
Since $\triangle$ does not intersect with the branching locus of $S_t\to \P^1\times \P^1$, the inverse image of $\triangle$ by $S_t\to \P^1\times \P^1$ decomposes into $N$ pieces of locally closed subsets $\triangle_0,\triangle_1,\dots, \triangle_{N-1}$, each of which is homeomorphic to $\triangle$.
We label them so that the closure of $\triangle_i$ contains $(x,y,v)=(0,0,\zeta^i) \in V$.

More explicitly, $\triangle_i$ is the  image of the following map.
\begin{align*}
\triangle_i \to V (\subset S_t); \quad (x, y) \mapsto (x, y, v)= \left(x, y, \zeta^i\dfrac{\sqrt[N]{y(1-y)(1-\l_2 y)}}{\sqrt[N]{x(1-x)(1- \l_1 x)}}\right),
\end{align*}
where $\sqrt[N]{a}$ denotes the ordinary $N$th root for a positive real number $a$.
Let $K_i$ be the closure of $\triangle_i$, which is a compact subset on $S_t$.
We can regard $K_i$ as a 2-chain on $S_t$.

\begin{prop}
For a holomorphic 2-form $\eta \in \Gamma(S_t,\Omega_{S_t}^2) = H^{2,0}(S_t)$, we have 
\begin{equation*}
\langle r((\xi_1^{(0)})_t - (\xi_0^{(0)})_t), [\eta] \rangle \equiv \int_{K_0} \eta -\int_{K_1} \eta\pmod{\mathcal{P}(\eta)}. 
\end{equation*}
\end{prop}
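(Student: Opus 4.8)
The strategy is to use the explicit formula \eqref{transregval}. Because $\eta$ is a holomorphic $2$-form and each $D_j$ is a curve, the pullback $\mu_j^*\eta$ vanishes, so every summand of the logarithmic term in \eqref{regulatorformula} is zero; using $H^1(S_t,\Q)=0$ from Proposition \ref{h1zero}, the pairing reduces by \eqref{transregval} to $\tfrac1m\int_\Gamma\eta \bmod\mathcal P(\eta)$, where $\Gamma$ is any topological $2$-chain with $\partial\Gamma=m\gamma$ and $\gamma$ is the $1$-cycle associated with $\xi:=(\xi_1^{(0)})_t-(\xi_0^{(0)})_t$. The task is therefore to produce such a $\Gamma$, with $m=1$, satisfying $\int_\Gamma\eta=\int_{K_0}\eta-\int_{K_1}\eta$.

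First I would record the shape of $\gamma$ and the geometry I will use. The cycle $\gamma$ is carried by the rational curves $Z$, $Q_{(0,0)}$ and $Q_{(1,1)}$: on $Z$ it is $(\psi_1^{(0)})^{-1}([\infty,0])-(\psi_0^{(0)})^{-1}([\infty,0])$, joining the points over $z=1$ (where $v=r,\zeta r$) and over $z=0$ (where $v=1,\zeta$), with $r=\sqrt[N]{1-\l_2}/\sqrt[N]{1-\l_1}$ and $v^N=(1-\l_2 z)/(1-\l_1 z)$ on $Z$; on $Q_{(0,0)},Q_{(1,1)}$ it is given by the matching arcs $(\varphi_\bullet^{(0)})^{-1}([\infty,0])$. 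The facts I will need are: the open triangle $\triangle$ avoids the branch locus of $S_t\to\P^1\times\P^1$; the diagonal curve $Z$ is rational, being a cyclic $N$-cover of $\P^1_z$ totally ramified only over $z=1/\l_1$ and $z=1/\l_2$; and the exceptional curve over each vertex $(0,0),(1,0),(1,1)$ of $\triangle$ is a single $\P^1$ (the local singularity is the diagonal quotient $\tfrac1N(1,1)$).

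Next I would set $\Gamma_0:=K_0-K_1$ and compute $\partial\Gamma_0$. Each $K_i$ is the closure of a lifted $2$-simplex, so $\partial K_i$ is the sum of its three lifted edges together with connecting arcs on the three exceptional $\P^1$'s. The diagonal edge of $K_i$ is the arc on $Z$ with $v=\zeta^i\sqrt[N]{(1-\l_2 z)/(1-\l_1 z)}$, $z\colon0\to1$; the bottom edge lies on $\{v=0\}$ and the right edge on $\{w=0\}$, and as these are independent of the sheet index $i$ they cancel in $K_0-K_1$. At the vertices $(0,0),(1,1),(1,0)$ the connecting arc runs on the exceptional $\P^1$ from $v=\zeta^i$ to $v=0$, from $v=\zeta^i r$ to $v=\infty$, and from $v=0$ to $v=\infty$, respectively, as the angular coordinate of $\triangle$ sweeps across its opening at that vertex. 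Collecting these contributions yields $\partial\Gamma_0=\gamma+D$, where $D$ is a $1$-cycle whose restriction to each of $Z$, $Q_{(0,0)}$, $Q_{(1,1)}$ and the exceptional curve over $(1,0)$ is \emph{separately} closed.

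Finally, as each of these curves is a $\P^1$ with vanishing $H_1$, every such closed piece of $D$ bounds a $2$-chain lying on that curve; let $\Sigma$ be their sum, so $\partial\Sigma=D$ and $\int_\Sigma\eta=0$ because $\Sigma$ is carried by curves. Then $\Gamma:=\Gamma_0-\Sigma$ satisfies $\partial\Gamma=\gamma$ (hence $m=1$) and $\int_\Gamma\eta=\int_{K_0}\eta-\int_{K_1}\eta$, which is the assertion. I expect the main obstacle to be precisely the boundary bookkeeping of the third paragraph: one must describe the connecting arcs at each vertex accurately enough to confirm that, once the bottom and right edges cancel, the remainder $D$ closes up separately on each exceptional curve and on $Z$. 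The feature that makes the argument succeed is that $\eta$ kills every chain carried by a curve, so only the genuinely two-dimensional piece $K_0-K_1$ contributes to the period.
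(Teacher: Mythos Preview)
Your proposal is correct and follows essentially the same approach as the paper: compute $\partial(K_0-K_1)$, observe that the edges lying on the branching locus (your $\{v=0\}$ and $\{w=0\}$) cancel, show that the remaining discrepancy with the associated $1$-cycle $\gamma$ is, curve by curve, a closed $1$-chain on a rational curve (the paper names these pieces $\Gamma_c,\Gamma_{0,0},\Gamma_{1,1},\Gamma_{1,0}$), and conclude because the holomorphic $2$-form $\eta$ restricts to zero on every curve. The only cosmetic difference is that the paper takes the single rational function $\psi_1^{(0)}\cdot(\psi_0^{(0)})^{-1}$ on $Z$ when describing $\gamma$, whereas you pull back $[\infty,0]$ by the two functions separately; the two descriptions differ by a $1$-cycle on $Z\simeq\P^1$, so this does not affect the argument.
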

\begin{proof}
First, we will describe the boundaries of $K_i$.
We label the paths appearing in the boundary of $K_i$ as in Figure \ref{Kfigure}.

\begin{figure}[h]
\centering
\includegraphics[width=10cm]{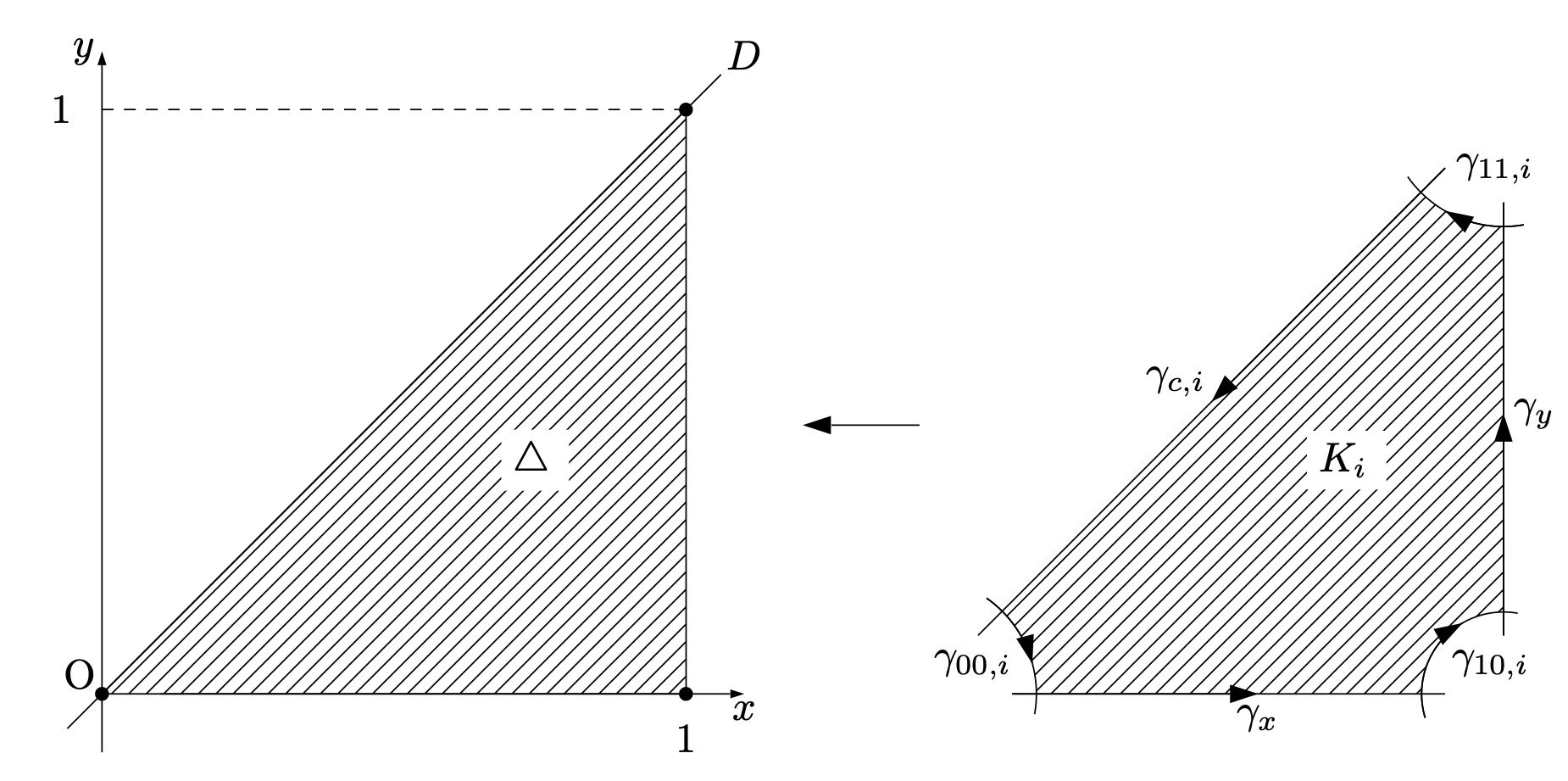}
\caption{The topological 2-chain $K_i$}
\label{Kfigure}
\end{figure}

The following properties hold.
\begin{enumerate}
\renewcommand{\labelenumi}{(\alph{enumi})}
\item The paths $\gamma_x$ and $\gamma_y$ are contained in the branching locus of $S_t\to \P^1\times \P^1$, so they are in common for every $i$.
\item The path $\gamma_{c,i}$ is a path from $(x,y,v) = \left(1,1,\zeta^i\dfrac{\sqrt[N]{1-\lambda_2}}{\sqrt[N]{1-\lambda_1}}\right)$ to $(x,y,v) = (0,0,\zeta^i)$ on the curve $Z_t$.
\item The path $\gamma_{00,i}$ (resp.=$\gamma_{11,i}, \gamma_{10,i}$) is a path on the exceptional curve $(Q_{(0,0)})_t$ (resp.~$(Q_{(1,1)})_t, (Q_{(1,0)})_t$).
\end{enumerate}
Since the starting point and the ending point of $\gamma_{10,i}$ are the same for $i=0,1$, we have $\partial(\gamma_{10,1}-\gamma_{10,0})=0$.
The exceptional curve $(Q_{(1,0)})_t$ is isomorphic to $\P^1$ and $H_1(\P^1,\Z)=0$, so we have a 2-chain $\Gamma_{1,0}$ on $(Q_{(1,0)})_t$ such that $\partial \Gamma_{1,0} = \gamma_{10,1}-\gamma_{10,0}$.

Let $\gamma_c$ (resp.~$\gamma_{00},\gamma_{11}$) be the 1-chain on $Z_t$ (resp. $(Q_{(0,0)})_t, (Q_{(1,1)})_t$) which is the pull-back of $[\infty, 0]$ on $\P^1$ (see Section \ref{regulator_map}) by the rational function $\psi_1^{(0)}\cdot (\psi_0^{(0)})^{-1} \in \C(Z_t)^\times$ (resp. $\varphi_0^{(0)}\in \C((Q_{(0,0)})_t)^\times, (\varphi_1^{(0)})^{-1}\in \C((Q_{(1,1)})_t)^\times$).
Then $\gamma_{c}+\gamma_{00}+\gamma_{11}$ is the 1-cycles associated with $\xi_{1}^{(0)} - \xi_{0}^{(0)}$.
Furthermore, we have
\begin{equation*}
\begin{aligned}
&\partial ( \gamma_{c,1} + \gamma_c -\gamma_{c,0}) = 0, \\
&\partial (\gamma_{00,1}+ \gamma_{00}  -\gamma_{00,0}) = 0,\\
&\partial ( \gamma_{11,1}+\gamma_{11} -  \gamma_{11,0}) = 0.\\
\end{aligned}
\end{equation*}
Since $Z_t,(Q_{(0,0)})_t$, and $(Q_{(1,1)})_t$ are isomorphic to $\P^1$, we have 2-chains $\Gamma_{c},\Gamma_{0,0}$, and $\Gamma_{1,1}$ on curves $Z_t,(Q_{(0,0)})_t$, and $(Q_{(1,1)})_t$, respectively,  such that
\begin{equation*}
\begin{aligned}
&\partial\Gamma_c  =  \gamma_{c,1} + \gamma_c -\gamma_{c,0},\\
&\partial\Gamma_{0,0} =\gamma_{00,1}+ \gamma_{00}  -\gamma_{00,0},\\
&\partial\Gamma_{1,1} = \gamma_{11,1}+\gamma_{11} -  \gamma_{11,0}.\\
\end{aligned}
\end{equation*}
By above relations, we have
\begin{equation*} 
\partial (K_0-K_1 + \Gamma_c + \Gamma_{0,0}+\Gamma_{1,0}+\Gamma_{1,1}) =  \gamma_{c}+\gamma_{00}+\gamma_{11}.
\end{equation*}
Hence the 2-chain $K_0-K_1 + \Gamma_c + \Gamma_{0,0}+\Gamma_{1,0}+\Gamma_{1,1}$ is a 2-chain associated with $\xi_{1}^{(0)}-\xi_{0}^{(0)}$. 
By the formula \eqref{transregval}, we have 
\begin{equation}\label{integral}
\langle r((\xi_1^{(0)})_t - (\xi_0^{(0)})), [\eta]\rangle = \int_{K_0}\eta  - \int_{K_1}\eta + \int_{\Gamma_c}\eta + \int_{\Gamma_{0,0}}\eta + \int_{\Gamma_{1,0}}\eta +\int_{\Gamma_{1,1}}\eta
\end{equation}
for each $\eta\in H^{2,0}(S_t)$.
Since $\eta$ is holomorphic 2-form, its restrictions on $(Q_{(0,0)})_t$, $(Q_{(1,1)})_t$, $(Q_{(1,0)})_t$ and $Z_t$ vanish.
Hence the terms after the third term in the equation (\ref{integral}) are zero.
Thus the proposition follows.
\end{proof}

\subsection{A differential equation for the normal function}

Using the local description of $\omega_t$, we have 
$$\int_{K_i}\omega_t= \int_{\triangle_i} \dfrac{dx dy}{\sqrt[N]{x^A(1-x)^A(1-\l_1 x)^A} \sqrt[N]{y^{N-A} (1-y)^{N-A} (1- \l_2 y)^{N-A}}}. $$

Define $\mathcal{L}(t)$ using the integral expression
$$\mathcal{L}(t)= \int_{K_0} \omega_t -\int_{K_{1}} \omega_t, $$
which is a multi-valued holomorphic function (cf. \cite[Proposition 5.3 (1)]{Sato}). 

\begin{lem}\label{covering}
For $i \in \Z/N\Z$, we have 
\begin{equation*}
\int_{K_i}\omega_t = \zeta^{Ai}\int_{K_0}\omega_t.
\end{equation*}
\end{lem}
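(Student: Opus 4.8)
The plan is to exploit the explicit parametrization of each piece $K_i$ together with the $\mu_N$-action that permutes the sheets. Recall that $\triangle_i$ is the image of the map $(x,y)\mapsto\bigl(x,y,\zeta^i\sqrt[N]{g(y)}/\sqrt[N]{f(x)}\bigr)$, so the sheets $\triangle_0,\dots,\triangle_{N-1}$ differ only by the choice of $N$th root recorded in the $v$-coordinate, namely by multiplication by $\zeta^i$. First I would pull back the computation to a single integral over the base triangle $\triangle\subset\R^2$ using the local formula for $\omega_t$ on the chart $V$. On $\triangle_i$ we have $v=\zeta^i\sqrt[N]{g(y)}/\sqrt[N]{f(x)}$, and hence $v^{N-A}=\zeta^{(N-A)i}\,g(y)^{\frac{N-A}{N}}/f(x)^{\frac{N-A}{N}}$, where the roots are the ordinary positive real roots fixed in the preceding paragraph. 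Substituting into $\omega=dx\wedge dy/(v^{N-A}f(x))$ gives
\begin{equation*}
\int_{K_i}\omega_t=\zeta^{-(N-A)i}\int_{\triangle}\frac{dx\,dy}{f(x)^{\frac{A}{N}}\,g(y)^{\frac{N-A}{N}}}.
\end{equation*}

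The key step is then purely arithmetic: since $\zeta$ is a primitive $N$th root of unity we have $\zeta^{-(N-A)i}=\zeta^{-Ni}\zeta^{Ai}=\zeta^{Ai}$, because $\zeta^{Ni}=1$. Setting $i=0$ identifies the remaining integral over $\triangle$ with $\int_{K_0}\omega_t$, and the lemma follows. Concretely, I would write
\begin{equation*}
\int_{K_i}\omega_t=\zeta^{-(N-A)i}\int_{K_0}\omega_t=\zeta^{Ai}\int_{K_0}\omega_t,
\end{equation*}
matching the integrand displayed just before the lemma statement.

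The one point that genuinely requires care — and which I expect to be the only real obstacle — is bookkeeping of the branch of the $N$th root, i.e.\ justifying that on each $\triangle_i$ the factor $v^{N-A}$ is exactly $\zeta^{(N-A)i}$ times the fixed positive-real root value appearing on $\triangle_0$, with no extra $N$th-root ambiguity creeping in. This is guaranteed by the labeling convention fixed earlier: the sheets are indexed so that the closure of $\triangle_i$ passes through $(x,y,v)=(0,0,\zeta^i)$, which pins down the constant $\zeta^i$ multiplying the (continuously varying, real-positive on $\triangle$) root, and since $\triangle$ is simply connected and avoids the branch locus, this choice propagates unambiguously over all of $\triangle_i$. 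Once the branch is fixed the remaining manipulation is the elementary substitution and the congruence $\zeta^{-(N-A)i}=\zeta^{Ai}$, so no further difficulty arises.
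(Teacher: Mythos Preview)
Your proof is correct and is essentially the same argument as the paper's; the paper merely packages it via the covering automorphism $\sigma_i\colon (x,y,v)\mapsto(x,y,\zeta^i v)$, noting $\sigma_i^*\omega=\zeta^{Ai}\omega$ and $\sigma_i(K_0)=K_i$, which is exactly your explicit substitution written invariantly.
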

\begin{proof}
Consider the automorphism 
$$\sigma_i\colon S\to S;\quad (x,y,v)\mapsto (x,y,\zeta^i v). $$
Then we have $\sigma_i^*\omega =\zeta^{Ai}\omega$ and $\sigma_i(K_0) = K_i$. Thus the lemma follows.
\end{proof}

\begin{thm} \label{main:1}
Let $\mathscr{D}$ be the differential operator defined in Proposition \ref{PF}.
The multi-valued holomorphic function $\mathcal{L}(t)$ satisfies the following system of differential equations: 
$$\mathscr{D}(\mathcal{L})=
\frac{1 - \zeta^A}{\l_1-\l_2} 
\begin{pmatrix}
\dfrac{(1-\l_2)^{\frac{A}{N}}}{(1-\l_1)^{\frac{A}{N}}}-1\\
1- \dfrac{(1-\l_1)^{\frac{N-A}{N}}}{(1-\l_2)^{\frac{N-A}{N}}}
\end{pmatrix}. $$
In particular, $\mathscr{D}(\langle \nu_{\rm tr}(\xi_1^{(0)} -\xi_0^{(0)}), \omega  \rangle)$ coincides with the right-hand side. 
\end{thm}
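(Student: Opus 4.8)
The plan is to reduce the system $\mathscr{D}(\mathcal{L})=(\cdots)$ to two scalar identities, one for $\mathscr{D}_{\l_1}$ and one for $\mathscr{D}_{\l_2}$, and to prove each by applying the differential operator directly to the iterated integral
\begin{equation*}
\mathcal{L}(t)=\int_{\triangle_0}F\,dx\,dy-\int_{\triangle_1}F\,dx\,dy,\qquad
F=\frac{1}{\sqrt[N]{x^A(1-x)^A(1-\l_1x)^A}\,\sqrt[N]{y^{N-A}(1-y)^{N-A}(1-\l_2y)^{N-A}}},
\end{equation*}
and then integrating by parts in one of the two variables. By Lemma \ref{covering} we have $\int_{K_i}\omega_t=\zeta^{Ai}\int_{K_0}\omega_t$, so $\mathcal{L}=(1-\zeta^A)\int_{K_0}\omega_t$; the factor $(1-\zeta^A)$ on the right-hand side of the theorem already signals that this reduction is where that constant comes from, so I would factor it out at the very beginning and reduce to computing $\mathscr{D}_{\l_1}\int_{K_0}\omega_t$ and $\mathscr{D}_{\l_2}\int_{K_0}\omega_t$.

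\textbf{The core computation.}
Focus on the first component. The integrand $F$ factors as $F=g_1(x)\,g_2(y)$ with $g_1(x)=\bigl(x(1-x)(1-\l_1x)\bigr)^{-A/N}$ depending on $\l_1$ only through the factor $(1-\l_1x)^{-A/N}$. First I would differentiate under the integral sign: the operator $\mathscr{D}_{\l_1}$ acts purely on $g_1(x)$, so $\mathscr{D}_{\l_1}\mathcal{L}=(1-\zeta^A)\int_{\triangle_0}(\mathscr{D}_{\l_1}g_1)\,g_2\,dx\,dy$. The key algebraic input, exactly as in the proof of Proposition \ref{PF}, is that $\mathscr{D}_{\l_1}g_1\,dx$ is an \emph{exact} $1$-form on the $x$-curve, i.e.\ $\mathscr{D}_{\l_1}g_1\,dx=d\bigl(h_1(x)\bigr)$ for an explicit $h_1$ (this is the inhomogeneous Picard–Fuchs phenomenon: the same operator that annihilates the full period produces a total derivative here because $\triangle_0$ is only a chain, not a cycle). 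Then by Fubini and the fundamental theorem of calculus the inner $x$-integral collapses to boundary contributions $h_1(x)\big|$ evaluated at the endpoints of the $x$-range, which on $\triangle_0=\{0<y\le x<1\}$ are $x=y$ and $x=1$. This is precisely the mechanism that turns a $2$-dimensional integral into the elementary closed form on the right-hand side, and it is the step I expect to carry the whole proof.

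\textbf{Evaluating the boundary terms.}
After the collapse I would be left with a single integral over $0<y<1$ of $h_1\,g_2$ evaluated at the two boundary walls. The wall $x=1$ contributes a term where $g_1$ picks up $(1-\l_1)^{-A/N}$, which combines with the $y$-integral of $g_2$; the diagonal wall $x=y$ produces a term where the $x$- and $y$-integrands merge. I anticipate that the $x=1$ boundary integral, after the substitution built into $g_2$, produces the factor $(1-\l_1)^{-A/N}$ against $\int_0^1 y^{-(N-A)/N}(1-y)^{\cdots}(1-\l_2y)^{\cdots}\,dy$, while the diagonal boundary yields the complementary term; the difference of the two walls must assemble into $\tfrac{1}{\l_1-\l_2}\bigl((1-\l_2)^{A/N}/(1-\l_1)^{A/N}-1\bigr)$. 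Getting the denominator $\l_1-\l_2$ to appear is the delicate bookkeeping point: it should emerge from combining $(1-\l_1x)$ and $(1-\l_2y)$ along $x=y$, where $(1-\l_1y)-(1-\l_2y)=(\l_2-\l_1)y$. I would track the fractional exponents and the choices of branch of $\sqrt[N]{\cdot}$ carefully, since the normalization of $T\to T_0$ fixing the $N$th roots of $\l_i$ and $1-\l_i$ is exactly what makes these boundary evaluations single-valued.

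\textbf{The second component and conclusion.}
The equation for $\mathscr{D}_{\l_2}$ is proved the same way but integrating by parts in $y$ instead of $x$; here I would use that $\mathscr{D}_{\l_2}g_2\,dy$ is exact (note the operator $\mathscr{D}_{\l_2}$ has the shifted middle coefficient $2(A/N-\l_2)$ rather than $2(1-A/N-\l_2)$, reflecting the exponent $N-A$ in $g_2$ versus $A$ in $g_1$). The boundary walls $y=x$ and $y=0$ then produce the second right-hand side, with the exponent $(N-A)/N$ replacing $A/N$ and the orientation giving the sign pattern $1-(1-\l_1)^{(N-A)/N}/(1-\l_2)^{(N-A)/N}$. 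The main obstacle throughout is the explicit verification that $\mathscr{D}_{\l_i}g_i\,dx_i$ is exact together with the exact identification of the antiderivative $h_i$, since every boundary value depends on it; once that is in hand, the two scalar identities follow by routine evaluation of elementary boundary integrals. The final sentence of the theorem is then immediate: since $\mathrm{ev}_t\langle\nu_{\mathrm{tr}}(\xi_1^{(0)}-\xi_0^{(0)}),\omega\rangle=\langle r((\xi_1^{(0)})_t-(\xi_0^{(0)})_t),[\omega]\rangle$ and the latter equals $\mathcal{L}(t)$ modulo periods by the previous proposition, applying $\mathscr{D}$ and using Proposition \ref{PF} (which kills the period ambiguity) shows $\mathscr{D}\langle\nu_{\mathrm{tr}}(\cdots),\omega\rangle$ equals the same right-hand side.
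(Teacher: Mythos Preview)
Your approach is essentially the paper's: factor out $(1-\zeta^A)$ via Lemma \ref{covering}, differentiate under the integral, use that $\mathscr{D}_{\l_1}g_1\,dx$ is exact to collapse to a boundary integral via Stokes, and argue symmetrically for $\mathscr{D}_{\l_2}$. One correction to your anticipation of the details: the explicit antiderivative is
\[
H(\l_1,x)=-\tfrac{A}{N}\,x^{1-A/N}(1-x)^{1-A/N}(1-\l_1x)^{-1-A/N},
\]
which vanishes at both $x=0$ and $x=1$, so the wall $x=1$ contributes nothing and only the diagonal wall $x=y$ survives. Along the diagonal $x=y=z$ the powers $z^{1-A/N}(1-z)^{1-A/N}$ in $H$ cancel exactly against those in $g_2(z)$, leaving $\tfrac{A}{N}\int_0^1 (1-\l_2z)^{A/N-1}(1-\l_1z)^{-1-A/N}\,dz$; the substitution $u=\sqrt[N]{(1-\l_2z)/(1-\l_1z)}$ then evaluates this to $\tfrac{1}{\l_1-\l_2}\bigl((1-\l_2)^{A/N}/(1-\l_1)^{A/N}-1\bigr)$ directly. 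Thus both terms on the right-hand side arise from the endpoints $z=0$ and $z=1$ of the single diagonal integral, not from two different boundary walls.
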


\begin{proof}
Let $H(\l_1, x)$ be the local holomorphic function defined by 
$$H(\l_1, x)=-\frac{A}{N} x^{1-\frac{A}N}(1-x)^{1-\frac{A}N} (1-\l_1x)^{-1- \frac{A}N}. $$
Then we have 
$$\mathscr{D}_{\l_1}\left( \frac{1}{x^{\frac{A}N}(1-x)^{\frac{A}N}(1-\l_1x)^{\frac{A}N} }\right)=\frac{\partial }{\partial \l_1} H(\l_1, x). $$
Therefore, by Stokes's theorem, we have 
\begin{align*}
\mathscr{D}_{\l_1} \left(\int_{K_0} \omega_t\right) &=\mathscr{D}_{\l_1} \left(\int_{K_0} \dfrac{dx dy}{\sqrt[N]{x^A(1-x)^A(1-\l_1 x)^A} \sqrt[N]{y^{N-A} (1-y)^{N-A} (1- \l_2 y)^{N-A}}} \right) \\
&=\int_{K_0} d \left( \dfrac{H(\l_1, x)}{\sqrt[N]{y^{N-A} (1-y)^{N-A} (1- \l_2 y)^{N-A}}} \right) \\
&=\int_{\partial K_0} \dfrac{H(\l_1, x)}{\sqrt[N]{y^{N-A} (1-y)^{N-A} (1- \l_2 y)^{N-A}}}. 
\end{align*}
Since the $1$-form vanishes on $\partial K_0$ except $\gamma_{c, 0}$, we have 
\begin{align*}
=\frac{A}N \int_0^1 \dfrac{dz}{(1-\l_2 z)^{1-\frac{A}N} (1-\l_1 z)^{1+\frac{A}{N}} } 
&=\frac{A}{\l_1-\l_2}\int_1^{\frac{\sqrt[N]{1-\l_2}}{\sqrt[N]{1-\l_1}}} u^{A-1}du  =\frac{1}{\l_1-\l_2} \left( \frac{(1-\l_2)^{\frac{A}{N}}}{(1-\l_1)^{\frac{A}{N}}}-1\right). 
\end{align*}
Here, we put $u=\frac{\sqrt[N]{1-\l_2z}}{\sqrt[N]{1-\l_1z}}$. 
Then by Lemma \ref{covering}, we can check the coincidence of the first component in the statement. 
The coincidence of the second component is similarly proved.  
\end{proof}

\begin{cor}
For a general $t \in T$,  we have 
$$\rk \ch^2(S_t, 1)_{\rm ind} \geq \varphi(N), $$
where $\varphi$ is Euler's totient function. 
\end{cor}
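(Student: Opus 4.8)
The plan is to manufacture $\varphi(N)$ independent classes out of the single normal function already computed in Theorem \ref{main:1}, by combining the automorphisms $\sigma_i$ with the $\Q$-linear independence of the $N$th roots of unity. First I would set $\eta = \xi_1^{(0)} - \xi_0^{(0)}$ and introduce the whole family $\eta_i := \xi_1^{(i)} - \xi_0^{(i)}$ for $i \in \Z/N\Z$. Using the $T$-automorphism $\sigma_i\colon (x,y,v)\mapsto (x,y,\zeta^i v)$ from the proof of Lemma \ref{covering}, a direct computation on the defining rational functions $\psi_\bullet^{(j)},\varphi_\bullet^{(j)}$ shows $(\sigma_i)_*\eta = \eta_i$, while $\sigma_i^*\omega = \zeta^{Ai}\omega$. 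Since the transcendental regulator satisfies $\langle r((\sigma_i)_*\eta),\omega\rangle = \langle r(\eta),\sigma_i^*\omega\rangle$, I obtain at the level of normal functions
\[
\langle \nu_{\mathrm{tr}}(\eta_i),\omega\rangle = \zeta^{Ai}\,[\mathcal{L}]\in\mathcal{Q}(\omega),\qquad i\in\Z/N\Z,
\]
so the whole orbit is controlled by the single function $\mathcal{L}$.

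The heart of the argument is then linear algebra over $\Q(\zeta)$. For integers $n_i$, the $\omega$-component of $\sum_i n_i\,\nu_{\mathrm{tr}}(\eta_i)$ equals $c\,[\mathcal{L}]$ with $c := \sum_i n_i\zeta^{Ai}$. I claim this vanishes in $\mathcal{Q}(\omega)$ only when $c=0$: indeed $c[\mathcal{L}]=0$ means $c\mathcal{L}\in\mathcal{P}(\omega)$, so $c\,\mathscr{D}(\mathcal{L}) = \mathscr{D}(c\mathcal{L}) = 0$ by Proposition \ref{PF}, and since $\mathscr{D}(\mathcal{L})\neq 0$ by Theorem \ref{main:1} (the right-hand side is manifestly nonzero) while $c$ is a constant, we get $c=0$. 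Because $\gcd(A,N)=1$, the scalars $\{\zeta^{Ai}\}_{i\in\Z/N\Z}$ run over all of $\mu_N$, whose $\Q$-span is $\Q(\zeta)$ of dimension $\varphi(N)$; hence the $\Q$-linear map $\Z^N\to\Q(\zeta)$, $(n_i)\mapsto c$, is surjective with kernel of rank $N-\varphi(N)$. Thus the sections $\nu_{\mathrm{tr}}(\eta_i)$ span a subgroup of $\Gamma(T,\mathcal{Q})$ of rank $\geq \varphi(N)$, and every combination with $c\neq 0$ is a nonzero section.

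Finally I would descend to a single fiber. For each $(n_i)\in\Z^N$ with $c\neq 0$, the section $\sum_i n_i\,\nu_{\mathrm{tr}}(\eta_i)$ is nonzero, so by Lemma \ref{basiclem} its evaluation $\mathrm{ev}_t = r\big(\sum_i n_i(\eta_i)_t\big)$ is nonzero for $t$ outside a countable union of proper analytic subsets of $T$. Ranging over the countably many admissible $(n_i)$ and passing to the complement, for very general $t$ every integer combination with $c\neq 0$ has nonzero transcendental regulator, hence is indecomposable by Proposition \ref{transregprop}. Therefore the kernel of $\Z^N\to\ch^2(S_t,1)_{\mathrm{ind}}$, $(n_i)\mapsto\sum_i n_i[(\eta_i)_t]$, is contained in $\{c=0\}$, so its image has rank $\geq \varphi(N)$, which yields $\rk \ch^2(S_t,1)_{\mathrm{ind}}\geq \varphi(N)$. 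I expect the last step to be the main obstacle: one must secure a single very general $t$ that simultaneously witnesses indecomposability for all integer combinations at once, and it is precisely the countable-union genericity in Lemma \ref{basiclem} that makes this possible, explaining why the statement holds only for very general (rather than every) $t$.
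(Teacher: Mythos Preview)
Your proof is correct and follows essentially the same approach as the paper: both use the automorphisms $\sigma_i$ to show $\langle\nu_{\mathrm{tr}}(\eta_i),\omega\rangle=\zeta^{Ai}[\mathcal{L}]$, then exploit the $\Q$-linear independence of the $\zeta^{Ai}$ together with the nonvanishing of $\mathscr{D}(\mathcal{L})$ from Theorem~\ref{main:1}. The paper packages the linear-algebra step as ``the cycles form a $\Z[\zeta]$-module generated by one non-torsion element, hence of $\Z$-rank $\varphi(N)$'', whereas you unwind this explicitly; you are also more careful than the paper about the descent to a single fiber via Lemma~\ref{basiclem}, which the paper leaves implicit.
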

\begin{proof}
Using the automorphism $\sigma_i$ in Lemma \ref{covering}, we have $(\sigma_i)^*(\xi^{(i)}_j) = \xi^{(0)}_j$ for $j=0,1$ by the explicit description of rational functions.
Since $\sigma_i(K_0) = K_i$, $K_i$ is a 2-chain related to $\xi^{(i)}_1- \xi^{(i)}_0$.
Combining with this fact and Lemma \ref{covering}, we have $$\nu_{\mathrm{tr}}(\xi_1^{(i)}-\xi_0^{(i)})=\zeta^{Ai}\nu_{\mathrm{tr}}(\xi_1^{(0)}-\xi_0^{(0)}).$$
Thus $\langle \xi^{(i)}_1-\xi^{(i)}_0 \mid i\in \Z/N\Z \rangle_\Z$ is a $\Z[\zeta]$-module generated by $\xi_1^{(0)}-\xi_0^{(0)}$.
By Theorem \ref{main:1}, $\xi_1^{(0)}-\xi_0^{(0)}$ is non-torsion.
Then, as a module over the ring $\Z[\zeta]$, this module is free of rank 1, so the rank of this as an abelian group is equal to $\varphi(N)$.
Therefore, the cycles $(\xi^{(i)}_1)_t - (\xi^{(i)}_0)_t$ generate a subgroup of rank at least $\varphi(N)$ in the indecomposable part.
\end{proof} 

\section{A group action on varieties} \label{GroupAction}
In this section, we define a group $\widetilde{G^2}$ and its action on $S\to T$.
Then we describe $\widetilde{G^2}$-actions on period functions and normal functions.

\subsection{Notations}

For an automorphism $\rho$ on $X$, $\rho^\sharp$ denotes the morphism of sheaves of rings 
$$\mathscr{O}_X\to \rho_*\mathscr{O}_X;\quad  \varphi\mapsto \varphi\circ\rho.$$
We use the same notation $\rho^\sharp$ for its adjoint $\rho^*\mathscr{O}_X\to \mathscr{O}_X$.

If a group $G$ acts on $X$, a \textit{$G$-linearization} on an line bundle $\L$ on $X$ is a collection $(\Phi_{g}\colon  g^*\L\xrightarrow{\sim} \L)_{g\in G}$ of $\mathscr{O}_X$-module isomorphisms satisfying the relation $\Phi_{h}\circ h^*(\Phi_g) = \Phi_{gh}$ for $g,h\in G$.

A \textit{1-cocycle} on $\Gamma(X,\mathscr{O}_X^\times)$ is a map $\chi\colon G\rightarrow \Gamma(X,\mathscr{O}_X^\times)$ satisfying
\begin{equation*}
\chi(gh) = h^\sharp(\chi(g))\cdot \chi(h)
\end{equation*}
for any $g,h\in G$. 

\subsection{Construction of a group action on $S\to T$}
Let $G_0$ be the symmetric group of the set 
$\{0,1,1/\lambda\}$.
We define the $G_0$-action on $R_0$ and $\P^1\times R_0$ as in Table \ref{groupactiontable}.

\begin{table}[H]
\begin{tabular}{c|c|c}
$G_0$ & action on $R_0$ & action on $\P^1$ \\ \hline 
$(0\:\:1)$ & $\lambda\mapsto \frac{\lambda}{\lambda-1}$ & $z\mapsto 1-z$\\
$(0\:\:1/\lambda)$ & $\lambda \mapsto 1-\lambda$ & $z\mapsto \frac{1-\lambda z}{1-\lambda}$\\
$(1\:\:1/\lambda)$ & $\lambda \mapsto 1/\lambda$ & $z\mapsto \lambda z$\\
$(0\: \:1\:\: 1/\lambda)$ & $\lambda \mapsto \frac{1}{1-\lambda}$ & $z\mapsto 1-\lambda z$\\
$(0\:\: 1/\lambda \: \: 1)$ & $\lambda \mapsto \frac{\lambda-1}{\lambda}$ & $z\mapsto \frac{\lambda(1-z)}{z}$
\end{tabular}
\caption{A $G_0$-action on $\P^1\times R_0$}\label{groupactiontable}
\end{table}

By definition, the second projection $\P^1\times R_0\rightarrow R_0$ is equivariant with respect to these $G_0$-actions.
Furthermore, $G_0$ naturally acts on the set of the sections $z = 0, 1, 1/\lambda$ of $\P^1\times R_0\rightarrow R_0$, and for each element in $G_0$, its action on the set of these sections coincides with its label in $G_0$.
Let $A=A_0\left[\sqrt[N]{\lambda},\sqrt[N]{1-\lambda}\right]$ and $R=\spec A$.
\begin{prop}
There exists a group $G$, a surjective group homomorphism $p\colon G\to G_0$, and a $G$-action on $R$ satisfying the following property.
\begin{enumerate}
\item The kernel of $p$ is isomorphic to $\mu_N\times \mu_N$.
We identify $\ker(p)$ with $\mu_N\times \mu_N$ through this isomorphism.
\item The subgroup $\ker(p)=\mu_N\times \mu_N$ acts on $A$ as follows.
\begin{equation*}
\ker(p) \times A \rightarrow A; ((\zeta^i, \zeta^j),(\sqrt[N]{\lambda}, \sqrt[N]{1-\lambda}))\mapsto (\zeta^i\sqrt[N]{\lambda}, \zeta^j\sqrt[N]{1-\lambda})
\end{equation*}
\item The natural morphism $R\rightarrow R_0$ is equivariant with respect to these $G$-actions, where $G$ acts on $S_0$ through $p\colon G\to G_0$.
\end{enumerate}
For $\rho \in G$, we denote $p(\rho)\in G_0$ by $\underline{\rho}$.
\end{prop}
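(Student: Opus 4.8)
The plan is to construct $G$ explicitly as a group of automorphisms of $R = \spec A$ lying over the $G_0$-action on $R_0$, and then recognize $G$ as an extension of $G_0$ by $\mu_N \times \mu_N$. The starting observation is that $A = A_0[\sqrt[N]{\lambda}, \sqrt[N]{1-\lambda}]$ is obtained from $A_0$ by adjoining $N$th roots of the two functions $\lambda$ and $1-\lambda$, and that $G_0$ permutes the three functions $\lambda$, $1-\lambda$, and (after the substitutions in Table \ref{groupactiontable}) the relevant monomials in them. Concretely, for each $g \in G_0$ and each of the two generators $\sqrt[N]{\lambda}, \sqrt[N]{1-\lambda}$, I would compute $g^\sharp(\lambda)$ and $g^\sharp(1-\lambda)$ from the $R_0$-column of the table; each of these is, up to a root of unity, an $N$th power of a monomial in $\sqrt[N]{\lambda}, \sqrt[N]{1-\lambda}$ inside $A$. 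For example, under $(0\ 1/\lambda)$ we have $\lambda \mapsto 1-\lambda$, so a lift must send $\sqrt[N]{\lambda}$ to some $N$th root of $1-\lambda$, i.e. to $\zeta^a \sqrt[N]{1-\lambda}$; the ambiguity in the choice of root is exactly the $\mu_N \times \mu_N$ freedom.

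The key steps, in order, are as follows. First, I would verify that each $g \in G_0$ admits at least one lift to a ring automorphism $\tilde g$ of $A$ over $g^\sharp$ on $A_0$; this reduces to checking that $g^\sharp(\lambda)$ and $g^\sharp(1-\lambda)$ each become $N$th powers in $A$, which follows because $A$ already contains $\sqrt[N]{\lambda}$ and $\sqrt[N]{1-\lambda}$ and the table entries are (Laurent) monomials in $\lambda$ and $1-\lambda$. Second, I would define $G$ to be the group of \emph{all} ring automorphisms of $A$ whose restriction to $A_0$ lies in $G_0$ (equivalently, all automorphisms of $R/\spec\C$ lying over some element of $G_0$ acting on $R_0$), and let $p\colon G \to G_0$ be the restriction map. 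Third, I would identify $\ker(p)$: an automorphism fixing $A_0$ pointwise must fix $\lambda$ and $1-\lambda$, hence send $\sqrt[N]{\lambda} \mapsto \zeta^i \sqrt[N]{\lambda}$ and $\sqrt[N]{1-\lambda} \mapsto \zeta^j \sqrt[N]{1-\lambda}$ for unique $i,j \in \Z/N\Z$, and conversely every such assignment defines an automorphism since $A$ is a free $A_0$-module with the evident relations; this gives the isomorphism $\ker(p) \cong \mu_N \times \mu_N$ of part (i) together with the explicit action in part (ii). Fourth, surjectivity of $p$ is exactly the lifting statement from the first step, and part (iii) is immediate since every element of $G$ restricts to an element of $G_0$ on $A_0$ by construction.

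The main obstacle I expect is \emph{surjectivity} of $p$, i.e. confirming that every $g \in G_0$ genuinely lifts. This is where the explicit entries of Table \ref{groupactiontable} must be used: one has to check case by case (it suffices to treat generators of $G_0$, say the two transpositions) that the image under $g^\sharp$ of each of $\lambda$ and $1-\lambda$ is an $N$th power of an element already present in $A$, up to a unit that is an $N$th root of unity. Since the substitutions are fractional linear in $\lambda$ with the three branch points permuted among $\{0,1,\infty\}$ in the $\lambda$-coordinate, the functions $g^\sharp(\lambda)$ and $g^\sharp(1-\lambda)$ are always monomials of the form $\pm \lambda^{\pm 1}(1-\lambda)^{\pm 1}$ (possibly times a constant), whose $N$th roots lie in $A$ precisely because $\sqrt[N]{\lambda}, \sqrt[N]{1-\lambda} \in A$ and $\C$ contains all $N$th roots of unity; I would record the explicit lift for each table entry to make this transparent. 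Once surjectivity and the kernel computation are in place, the short exact sequence $1 \to \mu_N \times \mu_N \to G \to G_0 \to 1$ and all three listed properties follow formally, completing the proof.
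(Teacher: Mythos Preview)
Your proposal is correct and follows essentially the same strategy as the paper: define $G$ as the group of all automorphisms of $A$ lying over the $G_0$-action on $A_0$, identify $\ker(p)$ with $\mu_N\times\mu_N$ via the Galois action on the adjoined roots, and check surjectivity by lifting the two generating transpositions. The one point the paper makes explicit that you gloss over is that the lift of $(0\ 1)$ sends $\lambda$ to $-\lambda/(1-\lambda)$, so one needs an $N$th root of $-1$ in $\C$---i.e.\ a primitive $2N$th root of unity, not merely the $N$th roots of unity you invoke.
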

\begin{proof}
Since $\Aut(A/A_0) =\mu_N\times \mu_N$, it is enough to show that for each $\rho \in G_0$, there exists a lift $\tilde{\rho}\in \Aut(R)$ of $\rho$.
It is enough to check the above property for generators $(0 \ 1), (0 \ 1/\lambda) \in G_0$.
We can directly construct their lifts, since $\C$ contains a primitive $2N$th root of unity.
\end{proof}

Since $\P^1\times R\rightarrow R$ is the base change of $\P^1\times R_0\rightarrow R_0$, the $G_0$-action on $\P^1\times R_0\to R_0$ and the $G$-action on $R$ induces a $G=G_0\times_{G_0} G$-action on $\P^1\times R\rightarrow R$.
By taking the self product of $\P^1\times R\rightarrow R$, we have a $G^2$-action on $(\P^1\times \P^1)\times (R\times R)\rightarrow R\times R$.
Furthermore, since the open subset $T$ of $R\times R$ is stable under $G^2$-action, we have a $G^2$-action on $\P^1\times \P^1\times T\to T$.

Next, we consider the lift of $G^2$-action on $\overline{S}$.
We use the following liftability criterion of group actions.

\begin{prop}[{\cite[Proposition 6.3]{Sato}}]\label{liftprop}
Suppose a group $G$ acts on a variety $X$ and a $G$-linearization $(\Phi_g)_{g\in G}$ of a line bundle $\L$ on $X$ are given.
Let $\pi \colon Y\rightarrow X$ be the cyclic covering of degree $m$ associated with $(\L,s)$.
Assume that there exists a 1-cocycle $\chi \colon G\rightarrow \Gamma(X,\mathscr{O}^\times_X)$ such that 
\begin{equation*}
\Phi_g^{\otimes (-m)}(g^*(s)) = \chi(g)^{-m}\cdot s
\end{equation*}
for any $g\in G$.
Then there exists a $G$-action on $Y$ such that $\pi$ is $G$-equivariant.
More explicitly, the $G$-action on $Y$ is induced by the $\mathscr{O}_X$-module isomorphism 
\begin{equation*}
\begin{tikzcd}
g^*\L  \arrow[r,"\Phi_g","\sim"'] & \L \arrow[r,"\chi(g)^{-1}\times ","\sim"'] &[25pt] \L.
\end{tikzcd}
\end{equation*}
\end{prop}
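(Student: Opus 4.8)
The plan is to reformulate the existence of a lifted $G$-action on the cyclic covering $Y$ as the existence of a $G$-equivariant structure on the $\mathscr{O}_X$-algebra $\mathcal{B}=\bigoplus_{i=0}^{m-1}\L^{\otimes i}$ with $Y=\operatorname{Spec}_X\mathcal{B}$. The basic dictionary is: for an automorphism $g$ of $X$, giving a morphism $\tilde g\colon Y\to Y$ with $\pi\circ\tilde g=g\circ\pi$ is the same as giving an $\mathscr{O}_X$-algebra homomorphism $\Psi_g\colon g^*\mathcal{B}\to\mathcal{B}$, via the universal property of the relative spectrum together with the identification of the base change $g^*Y$ with $\operatorname{Spec}_X(g^*\mathcal{B})$. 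Moreover $\tilde g$ is an isomorphism exactly when $\Psi_g$ is, and the family $(\tilde g)_{g\in G}$ defines a $G$-action lifting the one on $X$ precisely when the $\Psi_g$ satisfy the cocycle relation $\Psi_{gh}=\Psi_h\circ h^*(\Psi_g)$, in the same convention as the linearization relation $\Phi_h\circ h^*(\Phi_g)=\Phi_{gh}$. So it suffices to produce such a compatible family of algebra isomorphisms.

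First I would introduce the \emph{twisted linearization} $\widetilde\Phi_g:=\chi(g)^{-1}\cdot\Phi_g\colon g^*\L\xrightarrow{\sim}\L$, which is exactly the map displayed in the statement. A direct computation shows $\widetilde\Phi$ is again a $G$-linearization of $\L$: using $\mathscr{O}_X$-linearity of $\Phi_h$ one finds $\widetilde\Phi_h\circ h^*(\widetilde\Phi_g)=\chi(h)^{-1}\,h^\sharp(\chi(g))^{-1}\,\Phi_{gh}$, and the $1$-cocycle identity $\chi(gh)=h^\sharp(\chi(g))\cdot\chi(h)$ collapses this to $\chi(gh)^{-1}\Phi_{gh}=\widetilde\Phi_{gh}$. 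Tensoring, $\widetilde\Phi$ induces isomorphisms $\widetilde\Phi_g^{\otimes i}\colon g^*\L^{\otimes i}\xrightarrow{\sim}\L^{\otimes i}$ on every power, hence a graded $\mathscr{O}_X$-module isomorphism $\Psi_g:=\bigoplus_{i=0}^{m-1}\widetilde\Phi_g^{\otimes i}\colon g^*\mathcal{B}\xrightarrow{\sim}\mathcal{B}$ which automatically satisfies the required cocycle relation.

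The crux, and the only place where the hypothesis $\Phi_g^{\otimes(-m)}(g^*s)=\chi(g)^{-m}\cdot s$ enters, is to check that each $\Psi_g$ respects the \emph{multiplication} on $\mathcal{B}$, not merely its graded module structure. In the degrees $i+j\ge m$ the multiplication is the contraction of $g^*\L^{\otimes i}\otimes g^*\L^{\otimes j}=g^*\L^{\otimes(i+j-m)}\otimes g^*\L^{\otimes m}$ against $g^*(s)\in\Gamma(X,g^*\L^{\otimes(-m)})$, and compatibility with $\Psi_g$ reduces, after using multiplicativity $\widetilde\Phi_g^{\otimes(i+j)}=\widetilde\Phi_g^{\otimes(i+j-m)}\otimes\widetilde\Phi_g^{\otimes m}$, to the single top-degree identity $\widetilde\Phi_g^{\otimes(-m)}(g^*s)=s$. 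This is immediate, since $\widetilde\Phi_g^{\otimes(-m)}(g^*s)=\chi(g)^{m}\,\Phi_g^{\otimes(-m)}(g^*s)=\chi(g)^m\cdot\chi(g)^{-m}\,s=s$. I expect this algebra-homomorphism verification to be the main obstacle: the module-level assertions are formal, but one must track variances carefully and recognize the multiplication as contraction against $s$ in order to see that the twist by $\chi$ is precisely the correction needed to pass from preservation of the grading to preservation of the full ring structure. Once this is established, the $\Psi_g$ form a system of $\mathscr{O}_X$-algebra isomorphisms satisfying the cocycle property, and the dictionary above yields the desired $G$-action on $Y=\operatorname{Spec}_X\mathcal{B}$ for which $\pi$ is equivariant, induced by $\widetilde\Phi_g=\chi(g)^{-1}\cdot\Phi_g$ as claimed.
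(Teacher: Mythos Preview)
Your argument is correct and is the natural approach to this liftability criterion: translate the problem into constructing a compatible family of $\mathscr{O}_X$-algebra isomorphisms $\Psi_g\colon g^*\mathcal{B}\to\mathcal{B}$ on $\mathcal{B}=\bigoplus_{i=0}^{m-1}\L^{\otimes i}$, observe that the twist $\widetilde\Phi_g=\chi(g)^{-1}\Phi_g$ is again a $G$-linearization by the cocycle identity for $\chi$, and then verify that the induced graded module isomorphism is multiplicative precisely because $\widetilde\Phi_g^{\otimes(-m)}(g^*s)=s$, which is exactly the hypothesis rewritten. Note that the present paper does not supply its own proof of this proposition: it is quoted from \cite[Proposition~6.3]{Sato} and used as a black box, so there is no in-paper argument to compare against. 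Your write-up would serve as a self-contained proof here.
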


To use this proposition, we will describe the $G^2$-linearization on $\L$ and $h$ defined in Section \ref{SurfaceFamily}.
Since $\P^1\times R\rightarrow R$ is $G$-equivariant, we have a natural $G$-linearization on $\Omega^1_{\P^1\times R/R}$.
This $G$-linearization induces a $G^2$-linearization $(\Phi_{\rho})_{\rho \in G^2}$ on $\L$ described as 
\begin{equation*}
\Phi_{\rho}(\rho^*(dx\otimes dy)) = \left(\frac{\del }{\del x}\rho_1^\sharp(x)\right)\left(\frac{\del }{\del y}\rho_2^\sharp(y)\right)\cdot (dx\otimes dy)\quad (\rho = (\rho_1,\rho_2)\in G^2).
\end{equation*}
Using this description, the 1-cocycle $\eta$ defined by 
\begin{equation*}
\Phi_\rho^{\otimes(-N)}(\rho^*(h)) = \eta(\rho)^{-1}\cdot h
\end{equation*}
is decribed as 
\begin{equation*}
\eta(\rho) = \eta_1(\underline{\rho}_1)\cdot \eta_2(\underline{\rho}_2), 
\end{equation*}
where $\eta_1$ (resp. $\eta_2$) is a 1-cocycle determined by the image of $G^2 \xrightarrow{pr_1} G\xrightarrow{p} G_0$ (resp. $G^2 \xrightarrow{pr_2} G\xrightarrow{p} G_0$) and given by Table \ref{1cocycleeta}.

\begin{table}[h]
\begin{tabular}{c|c||c|c}
$\underline{\rho}_1$ & $\eta_1(\underline{\rho}_1)$  & $\underline{\rho}_1$ &$\eta_1(\underline{\rho}_1)$  \\ \hline
$\mathrm{id}$ & $ 1 $ & $(0\:1)$ &  $(-1)^N(1-\lambda_1)^A$ \\ 
$(0\:1/\lambda)$ & $(-1)^{N-A}\lambda_1^{N-2A}(1-\lambda_1)^{2A-N}$ &$(0\:1\:1/\lambda)$ &$(-1)^{N-A}(1-\lambda_1)^{A}\lambda_1^{N-2A}$  \\

$(1\:1/\lambda)$ & $\lambda_1^{N-A}$ & $(0\:1/\lambda\:1)$& $ (-1)^A\lambda_1^{N-A}(1-\lambda_1)^{2A-N}$ \\
\multicolumn{4}{c}{}\\
$\underline{\rho}_2$ & $\eta_2(\underline{\rho}_2)$  & $\underline{\rho}_2$& $\eta_2(\underline{\rho}_2)$ \\ \hline
$\mathrm{id}$ & $ 1 $ & $(0\:1)$ &  $(-1)^N(1-\lambda_2)^{N-A}$ \\
$(0\:1/\lambda)$ & $(-1)^{A}\lambda_2^{2A-N}(1-\lambda_2)^{N-2A}$ &$(0\:1\:1/\lambda)$ &$(-1)^{A}(1-\lambda_2)^{N-A}\lambda_2^{2A-N}$ \\
$(1\:1/\lambda)$ & $\lambda_2^{A}$ & $(0\:1/\lambda \:1)$& $(-1)^{N-A}\lambda_2^{A}(1-\lambda_2)^{N-2A}$\\
\end{tabular}
\vspace{10pt}

\caption{The 1-cocycle $\eta_1,\eta_2$}\label{1cocycleeta}
\end{table}

To use Proposition \ref{liftprop}, we will find a ``$N$th root" of the 1-cocycle $\eta$.
We define coboundary 1-cocycles $\phi_1,\phi_2 \colon G^2\rightarrow B^\times$ by

\begin{equation*}
\begin{aligned}
&\phi_1(\rho) = \rho_1^\sharp\left(\frac{\lambda_1^{\frac{A}{N}}(1-\lambda_1)^{\frac{N-A}{N}}}{\lambda_1^2-\lambda_1+1}\right)\cdot \left(\frac{\lambda_1^2-\lambda_1+1}{\lambda_1^{\frac{A}{N}}(1-\lambda_1)^{\frac{N-A}{N}}}\right),\\
&\phi_2(\rho) = \rho_2^\sharp\left(\frac{\lambda_2^{\frac{N-A}{N}}(1-\lambda_2)^{\frac{A}{N}}}{\lambda_2^2-\lambda_2+1}\right)\cdot \left(\frac{\lambda_2^2-\lambda_2+1}{\lambda_2^{\frac{N-A}{N}}(1-\lambda_2)^{\frac{A}{N}}}\right). 
\end{aligned}
\end{equation*}
For $\rho = (\rho_1,\rho_2)\in G^2$, $\phi_1(\rho)$ (resp. $\phi_2(\rho)$) is determined by $\rho_1$ (resp. $\rho_2$), so we denote $\phi_1(\rho_1)$ (resp. $\phi_2(\rho_2)$).
Then we can check that these 1-cocycles satisfies the following relations
\begin{equation*}
\begin{aligned}
&\eta_1(\underline{\rho}_1) = \phi_1(\rho_1)^N\cdot (\mathrm{sgn}(\underline{\rho}_1))^{N-A}, \\
&\eta_2(\underline{\rho}_2) = \phi_2(\rho_2)^N\cdot (\mathrm{sgn}(\underline{\rho}_2))^{A}. \\
\end{aligned}
\end{equation*}

We define the group $\widetilde{G^2}$ as the fiber product $G^2 \times_{\mu_2} \mu_{2N}$ where the group homomorphisms $G^2 \rightarrow \mu_2$ and $\mu_{2N}\rightarrow \mu_2$ is given by
\begin{equation*}
\begin{aligned}
&G^2 \rightarrow \mu_2; && (\rho_1,\rho_2) \mapsto \mathrm{sgn}(\underline{\rho}_1)^{N-A}\mathrm{sgn}(\underline{\rho}_2)^{A}, \\
&\mu_{2N} \rightarrow \mu_2; && \zeta \mapsto \zeta^{N}.
\end{aligned}
\end{equation*}
An element of $\widetilde{G^2}$ can be described by a triplet
\begin{equation*}
(\rho_1,\rho_2,\zeta) \in G\times G \times \mu_{2N}
\end{equation*}
satisfying the relation $\zeta^N =  \mathrm{sgn}(\underline{\rho}_1)^{N-A}\mathrm{sgn}(\underline{\rho}_2)^{A}$.

The group $\widetilde{G^2}$ acts on $\P^1\times \P^1\times T$ through $G^2$.
For $\rho = (\rho_1,\rho_2,\zeta)\in \widetilde{G^2}$, we define 
\begin{equation*}
\chi(\rho) = \zeta\cdot \phi_1(\rho)\cdot \phi_2(\rho) \quad \in B^\times.
\end{equation*}
Then $\chi:\widetilde{G^2} \rightarrow B^\times$ is a 1-cocycle satisfying the relation
\begin{equation*}
\chi(\rho)^N = \eta(\rho).
\end{equation*}
Thus by Proposition \ref{liftprop}, $\widetilde{G^2}$-action on $\P^1\times \P^1\times T$ lifts to $\overline{S}$, and by the universality of the blowing up, we have $\widetilde{G^2}$-action on $S$.

\subsection{An explicit description}
For $\rho = (\rho_1,\rho_2,\zeta)\in \widetilde{G^2}$, $\rho$ acts on the local coordinate $v$ on $S$ as follows.
\begin{equation}\label{vexplicit}
\rho^{\sharp}(v)= \left\{
\begin{aligned}
  & \zeta^k\mathrm{sgn}(\underline{\rho_2})\theta_1(\rho)\theta_2(\rho)v && (N-A:\text{odd}), \\
  & \zeta^l\theta_1(\rho)\theta_2(\rho)v && (N-A:\text{even}),\\
\end{aligned}
\right.
\end{equation}
where $k,l$ are integers satisfying $k(N-A)\equiv -1 \pmod{2N}$ and $lA \equiv 1\pmod{2N}$, and $\theta_1,\theta_2$ are coboundary 1-cocycles defined by
\begin{equation*}
\begin{aligned}
&\theta_1(\rho_1)=\rho^\sharp\left(\frac{\sqrt[N]{\lambda_1}}{\sqrt[N]{1-\lambda_1}}\right)\cdot \frac{\sqrt[N]{1-\lambda_1}}{\sqrt[N]{\lambda_1}},\\
&\theta_2(\rho_2)=\rho^\sharp\left(\frac{\sqrt[N]{1-\lambda_2}}{\sqrt[N]{\lambda_2}}\right)\cdot \frac{\sqrt[N]{\lambda_2}}{\sqrt[N]{1-\lambda_2}}. \\
\end{aligned}
\end{equation*}
\begin{proof}

By the construction of the $\widetilde{G^2}$-action on $\overline{S}$, the local coordinate $z$ on $\overline{S}$ transforms
\begin{equation*}
\rho^\sharp(z) = \chi(\rho)^{-1}\frac{\del}{\del x}(\rho^\sharp(x))\frac{\del}{\del y}(\rho^\sharp(y)) z.
\end{equation*}
Since $v^{N-A}f(x) = z$, we have
\begin{equation*}
v^{N-A} \mapsto \zeta^{-1}\phi_1(\rho)^{-1}\phi_2(\rho)^{-1}\frac{\del}{\del x}(\rho^\sharp(x))\frac{\del}{\del y}(\rho^\sharp(y))\frac{f(x)}{\rho_1^\sharp(f(x))}v^{N-A}.
\end{equation*}
Thus it is enough to show that 
\begin{equation*}
\left(\zeta^k\mathrm{sgn}(\underline{\rho_2})\theta_1(\rho)\theta_2(\rho)\right)^{N-A} = \zeta^{-1}\phi_1(\rho)^{-1}\phi_2(\rho)^{-1}\frac{\del}{\del x}(\rho^\sharp(x))\frac{\del}{\del y}(\rho^\sharp(y))\frac{f(x)}{\rho^\sharp(f(x))}
\end{equation*}
for odd $N-A$, and 
\begin{equation*}
\left(\zeta^l\theta_1(\rho_1)\theta_2(\rho_2)\right)^{N-A}=
\zeta^{-1}\phi_1(\rho)^{-1}\phi_2(\rho)^{-1}\frac{\del}{\del x}(\rho^\sharp(x))\frac{\del}{\del y}(\rho^\sharp(y))\frac{f(x)}{\rho^\sharp(f(x))}
\end{equation*}
for even $N-A$.
This can be checked directly.
\end{proof}

\subsection{The group action on normal functions}
In this section, we will describe the $\widetilde{G}^2$-actions on the sheaf $\mathcal{Q}(\omega)$ and the differential operator $\mathscr{D}$.

\begin{prop} \label{Psidef}  \ 
\begin{enumerate}
\item 
For $\rho=(\rho_1, \rho_2, \zeta) \in \widetilde{G^2}$, let $\Psi_{\rho} \colon \rho^* \mathscr{O}_T^{\rm an} \to \mathscr{O}_T^{\rm an}$ be the $\mathscr{O}_T^{\rm an}$-module isomorphism defined by 
$$\varphi \mapsto \chi(\rho)^{-1} \cdot \rho^{\sharp}(\varphi). $$
Then $(\Psi_{\rho})_{\rho \in \widetilde{G^2}}$ defines a $\widetilde{G^2}$-linearization on $\mathscr{O}_T^{\rm an}$. 
Furthermore, $(\Psi_{\rho})_{\rho \in \widetilde{G^2}}$ induces a $\widetilde{G^2}$-linearization on $\mathcal{Q}(\omega)$. 
\item 
Let $\xi$ be an algebraic family of higher Chow cycles on $S \to T$. 
For any $\rho \in \widetilde{G^2}$, we have 
\begin{align*}
\langle \nu_{\rm tr}(\rho^* \xi),\omega\rangle= \Psi_{\rho} (\langle\nu_{\rm tr}(\xi),\omega\rangle). 
\end{align*}
\end{enumerate}
\end{prop}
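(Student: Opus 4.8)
The plan is to deduce both parts from the single transformation law
$$\rho^{*}\omega=\chi(\rho)\cdot\omega \qquad (\rho\in\widetilde{G^2}),$$
which is read off from the construction of the $\widetilde{G^2}$-action on $\overline{S}$. Throughout I denote the induced automorphism of $T$ again by $\rho$, so that $\rho^\sharp(\varphi)=\varphi\circ\rho$ and $(\rho^\sharp\varphi)(t)=\varphi(\rho(t))$, and I write $\rho_t\colon S_t\xrightarrow{\sim} S_{\rho(t)}$ for the restriction to the fibre over $t$. For part (1), the first task is to verify that $(\Psi_\rho)_\rho$ is a $\widetilde{G^2}$-linearization, i.e. that $\Psi_{\rho'}\circ(\rho')^*(\Psi_\rho)=\Psi_{\rho\rho'}$. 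Writing $\Psi_\rho$ as multiplication by the unit $\chi(\rho)^{-1}$ composed with the canonical isomorphism $\rho^*\mathscr{O}_T^{\rm an}\xrightarrow{\rho^\sharp}\mathscr{O}_T^{\rm an}$, this relation is equivalent to the $1$-cocycle identity $\chi(\rho\rho')=(\rho')^\sharp(\chi(\rho))\cdot\chi(\rho')$, which is exactly the property of $\chi$ established when it was constructed. Thus this step is purely formal.

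To obtain the transformation law I would compute on the chart $V$, where $\omega=dx\wedge dy/(v^{N-A}f(x))$. Since $v^{N-A}f(x)=z$, the relation $\rho^\sharp(z)=\chi(\rho)^{-1}\tfrac{\partial}{\partial x}(\rho^\sharp x)\,\tfrac{\partial}{\partial y}(\rho^\sharp y)\,z$ used in the proof of \eqref{vexplicit} gives $\rho^\sharp(v^{N-A}f(x))=\chi(\rho)^{-1}\tfrac{\partial}{\partial x}(\rho^\sharp x)\tfrac{\partial}{\partial y}(\rho^\sharp y)\,v^{N-A}f(x)$, while $\rho^{*}(dx\wedge dy)=\tfrac{\partial}{\partial x}(\rho^\sharp x)\tfrac{\partial}{\partial y}(\rho^\sharp y)\,dx\wedge dy$. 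The Jacobian factors cancel, leaving $\rho^{*}\omega=\chi(\rho)\,\omega$ as claimed.

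The substantive part of (1) is then that $\Psi_\rho$ descends to $\mathcal{Q}(\omega)=\mathscr{O}_T^{\rm an}/\mathcal{P}(\omega)$, for which it suffices to show $\Psi_\rho$ carries $\mathcal{P}(\omega)$ into itself. For a period $f=\int_{\Gamma_t}\omega_t$ attached to a closed $2$-cycle, change of variables along $\rho_t$ together with the transformation law gives $\rho^\sharp(f)(t)=\int_{\Gamma_{\rho(t)}}\omega_{\rho(t)}=\chi(\rho)(t)\int_{\rho_t^{-1}(\Gamma_{\rho(t)})}\omega_t$, where the last integral is again a period at $t$; hence $\Psi_\rho(f)=\chi(\rho)^{-1}\rho^\sharp(f)\in\mathcal{P}(\omega)$. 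For part (2) the plan is to track the associated $2$-chain: if $\Gamma_s$ is a $2$-chain associated with $\xi_s$, then $\rho_t^{-1}(\Gamma_{\rho(t)})$ is one associated with $(\rho^{*}\xi)_t=\rho_t^{*}(\xi_{\rho(t)})$, since the homeomorphism $\rho_t$ carries the defining divisors, rational functions and the paths $f_j^{-1}([\infty,0])$ of $\xi_{\rho(t)}$ to those of $\rho_t^{*}\xi_{\rho(t)}$. Then by \eqref{transregval} and change of variables,
\[
\int_{\rho_t^{-1}(\Gamma_{\rho(t)})}\omega_t=\int_{\Gamma_{\rho(t)}}(\rho_t^{-1})^{*}\omega_t=\chi(\rho)(t)^{-1}\int_{\Gamma_{\rho(t)}}\omega_{\rho(t)},
\]
using $(\rho_t^{-1})^{*}\omega_t=\chi(\rho)(t)^{-1}\omega_{\rho(t)}$, which is the transformation law restricted to the fibre. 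The right-hand side is $\chi(\rho)^{-1}\cdot\rho^\sharp\!\big(\langle\nu_{\rm tr}(\xi),\omega\rangle\big)$, so passing to classes modulo $\mathcal{P}(\omega)$ yields $\langle\nu_{\rm tr}(\rho^{*}\xi),\omega\rangle=\Psi_\rho\big(\langle\nu_{\rm tr}(\xi),\omega\rangle\big)$.

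The main obstacle I anticipate is not a hard estimate but the bookkeeping of conventions: the direction of the fibrewise isomorphism $\rho_t$ and of the base map $\rho$, the exponent sign in $(\rho_t^{-1})^{*}\omega_t=\chi(\rho)(t)^{-1}\omega_{\rho(t)}$, and, most delicately, the verification that $\rho_t^{-1}(\Gamma_{\rho(t)})$ really is an admissible $2$-chain for the pulled-back cycle, i.e. that $\rho$ matches up the regulator data so that $\partial\big(\rho_t^{-1}\Gamma_{\rho(t)}\big)=m\cdot\gamma$ for the correct $1$-cycle $\gamma$ associated with $\rho^{*}\xi$. Once the transformation law $\rho^{*}\omega=\chi(\rho)\omega$ is in hand and these conventions are fixed consistently, both parts are short.
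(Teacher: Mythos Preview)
Your proposal is correct and follows essentially the same route as the paper: both hinge on the transformation law $\rho^*\omega=\chi(\rho)\omega$, derive the fibrewise identity $(\rho_t^{-1})^*\omega_t=\chi(\rho)(t)^{-1}\omega_{\rho(t)}$ via the $1$-cocycle property of $\chi$, and then use change of variables along $\rho_t$ to show first that $\Psi_\rho$ preserves $\mathcal{P}(\omega)$ and second that $\rho_t^{-1}(\Gamma_{\rho(t)})$ is a $2$-chain associated with $(\rho^*\xi)_t$. Your explicit verification of $\rho^*\omega=\chi(\rho)\omega$ on the chart $V$ via $z=v^{N-A}f(x)$ is a bit more detailed than the paper's one-line appeal to the local description, but the argument is otherwise the same.
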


\begin{proof}
Let $\rho \in \widetilde{G^2}$. 
By the local description of the $\widetilde{G^2}$-action, we can check that $\rho^*\omega = \chi(\rho)\cdot \omega$.
Pulling-back this relation at $\rho(t)\in T$, we have 
\begin{equation}\label{rhotransformula}
(\rho_t^{-1})^*(\omega_{t}) = \chi(\rho^{-1})(\rho(t))\cdot \omega_{\rho(t)}
\end{equation}
Here $\rho_t\colon S_t\rightarrow S_{\rho(t)}$ is the restriction of 
the isomorphism $\rho\colon S\to S$ to fibers at $t,\rho(t)\in T$, and $\chi(\rho^{-1})(\rho(t))\in \C$ is the value of the rational function $\chi(\rho^{-1})$ at $\rho(t)$.
From the equation (\ref{rhotransformula}), for any 2-chain $\Gamma_{\rho(t)}$ on $S_{\rho(t)}$, we have 
\begin{equation}\label{sekibunprop}
\begin{aligned}
\int_{\rho_t^{-1}(\Gamma_{\rho(t)})}\omega_t &= \int_{\Gamma_{\rho(t)}}(\rho^{-1}_t)^*(\omega_t) = \chi(\rho^{-1})(\rho(t))\cdot \int_{\Gamma_{\rho(t)}}\omega_{\rho(t)}  \\
&= \rho^\sharp\left(\chi(\rho^{-1})\right)(t)\cdot \int_{\Gamma_{\rho(t)}}\omega_{\rho(t)} = \chi(\rho)(t)^{-1}\cdot \int_{\Gamma_{\rho(t)}}\omega_{\rho(t)}, 
\end{aligned}
\end{equation}
where the last equality follows from the property of 1-cocycles.

First, we prove (i).
Since $\chi^{-1}$ is a 1-cocycle for the $\widetilde{G^2}$-action on $\mathscr{O}_{T}^{\mathrm{an}}$, it is clear that $(\Psi_\rho)_{\rho\in \widetilde{G}^2}$ defines a $\widetilde{G^2}$-linearization on $\mathscr{O}_{T}^{\mathrm{an}}$.
To see that $(\Psi_\rho)_{\rho\in \widetilde{G}^2}$ induces a $\widetilde{G^2}$-linearization on $\mathcal{Q}(\omega)$,
we will show
$$\Psi_{\rho}(\rho^*\mathcal{P}(\omega)) = \mathcal{P}(\omega). $$ 
Let $U'$ be an open subset of $T$ in the classical topology and $\varphi\in \rho^*\mathcal{P}(\omega)(U')=\mathcal{P}(\omega)(\rho(U'))$.
Then there exists a $C^\infty$-family of 2-cycles $\{\Gamma_t\}_{t\in \rho(U)}$ such that $\varphi(t) = \int_{\Gamma_t}\omega_t$.
For any $t\in U'$, we have 
\begin{equation*}
(\Psi_{\rho}(\varphi))(t) = \chi(\rho)(t)^{-1}\cdot \varphi(\rho(t)) = \chi(\rho)(t)^{-1}\cdot \int_{\Gamma_{\rho(t)}}\omega_{\rho(t)} \underset{\eqref{sekibunprop}}{=} \int_{\rho_t^{-1}(\Gamma_{\rho(t)})}\omega_t\in \mathcal{P}(\omega_t).
\end{equation*}
Since we have $\Psi_{\rho}(\varphi)\in \mathcal{P}(\omega)(U')$, the morphism $\Psi_{\rho}\colon \rho^*\mathscr{O}_T^{\mathrm{an}}\to \mathscr{O}_T^{\mathrm{an}}$ induces a morphism 
\begin{equation} \label{Pisom}
\Psi_{\rho}\colon \rho^*\mathcal{P}(\omega)\to \mathcal{P}(\omega).
\end{equation} 
These morphisms satisfy the relation $\Psi_{gh} = h^*(\Psi_{g})\circ \Psi_h$ for any $g,h\in \widetilde{G^2}$ and $\Psi_{\mathrm{id}}=\mathrm{id}$.
In particular, $\rho^*(\Psi_{\rho^{-1}})$ is the inverse of $\Psi_{\rho}$, thus \eqref{Pisom} is an isomorphism.
Therefore $(\Psi_{\rho})_{\rho\in \widetilde{G^2}}$ induces the $\widetilde{G^2}$-linearization on $\mathcal{Q}(\omega) = \mathscr{O}_T^{\mathrm{an}}/\mathcal{P}(\omega)$.

Secondly, we prove (ii).
It is enough to show that 
\begin{equation}\label{ETS}
\langle \mathrm{ev}_t\left(\nu_{\mathrm{tr}}(\rho^*\xi)\right), [\omega_t]\rangle = \Psi_\rho
(\langle \nu_{\mathrm{tr}}(\xi)), [\omega]\rangle)(t)
\end{equation}
for any $t\in T$.
By Proposition \ref{h1zero}, there exists a positive integer $m$ such that $m$ times the 1-cycle associated with $\xi_{t}\:\:(t\in T)$
is bounded by a 2-chain.
We take a neighborhood $U'$ of $\rho(t)$ and a $C^\infty$-family of 2-chains $\{\Gamma_{t'}\}_{t'\in U'}$ such that $\Gamma_{t'}$ is a 2-chain whose boundary coincides with $m$ times the 1-cycle associated with $\xi_{t'}$.

The right-hand side of (\ref{ETS}) is represented by the value of local holomorphic function 
\begin{equation*}
\rho^{-1}(U')\ni t' \longmapsto \chi(\rho)(t')^{-1}\cdot \frac{1}{m}\int_{\Gamma_{\rho(t')}}\omega_{\rho(t')} \underset{\eqref{sekibunprop}}{=}\frac{1}{m}\int_{\rho_{t'}^{-1}\left(\Gamma_{\rho(t')}\right)}\omega_{t'}\in \C
\end{equation*}
at $t$.
Thus, the right-hand side of \eqref{ETS} is 
\begin{equation}\label{ans}
\Psi_\rho
(\langle \nu_{\mathrm{tr}}(\xi)), [\omega]\rangle)(t)=\frac{1}{m}\displaystyle \int_{\rho_t^{-1}(\Gamma_{\rho(t)})}\omega_{t} \mod \mathcal{P}(\omega_{t}).
\end{equation}
On the other hand, the left-hand side of (\ref{ETS}) is $\langle r((\rho^*\xi)_t), [\omega_t]\rangle  = \langle r(\rho_t^*(\xi_{\rho(t)})), [\omega_t]\rangle $.
Since $\rho_t^{-1}(\Gamma_{\rho(t)})$ is a 2-chain whose boundary coincides with $m$ times the 1-cycle associated with $\rho_t^*(\xi_{\rho(t)})$, the left-hand side of (\ref{ETS}) also coincides with (\ref{ans}).
Hence we have the result.
\end{proof}

\begin{prop}\label{thetadef}
Let $\d_i$ $(i=1, 2)$ be the $1$-cocycle defined by 
\begin{align*}
\delta_i(\rho_i)=\rho_i^{\sharp}\left(\dfrac{\l_i(1-\l_i)}{(\l_i^2-\l_i+1)^2} \right) \cdot \left(\dfrac{(\l_i^2-\l_i+1)^2}{\l_i(1-\l_i)} \right).  
\end{align*}
For $\rho=(\rho_1, \rho_2, \zeta) \in \widetilde{G^2}$, let $\Theta_{\rho} \colon \rho^*(\mathscr{O}^{\rm an}_T)^{\oplus 2} \to (\mathscr{O}^{\rm an}_T)^{\oplus 2}$ be the morphism defined by 
$$\begin{pmatrix}
\varphi_1 \\
\varphi_2 
\end{pmatrix} \mapsto 
\begin{pmatrix}
\chi(\rho)^{-1} \cdot \d_1(\rho_1)^{-1} \cdot \rho^{\sharp} (\varphi_1) \\
\chi(\rho)^{-1} \cdot \d_2(\rho_2)^{-1} \cdot \rho^{\sharp} (\varphi_2) 
\end{pmatrix}. $$
Let $\xi$ be an algebraic family of higher Chow cycles on $S \to T$. 
Then we have
$$\mathscr{D}(\langle\nu_{\rm tr}(\rho^* \xi),\omega\rangle)=\Theta_{\rho}(\mathscr{D}(\langle\nu_{\rm tr}(\xi),\omega\rangle)), $$
where $\mathscr{D}$ is the Picard-Fuchs differential operator defined in Proposition \ref{PF}. 
\end{prop}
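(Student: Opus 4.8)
The plan is to reduce the assertion to a single identity between differential operators and then to verify that identity one variable at a time. By Proposition \ref{Psidef}(ii) we have $\langle\nu_{\rm tr}(\rho^*\xi),\omega\rangle=\Psi_\rho(\langle\nu_{\rm tr}(\xi),\omega\rangle)$, so with $\mathcal{L}=\langle\nu_{\rm tr}(\xi),\omega\rangle$ and the definitions of $\Psi_\rho$ and $\Theta_\rho$ unwound, the claim becomes the pair of operator identities
\begin{equation*}
\mathscr{D}_{\l_i}\bigl(\chi(\rho)^{-1}\,\rho^\sharp(\mathcal{L})\bigr)=\chi(\rho)^{-1}\,\d_i(\rho_i)^{-1}\,\rho^\sharp\bigl(\mathscr{D}_{\l_i}(\mathcal{L})\bigr)\qquad(i=1,2).
\end{equation*}
Since $\mathscr{D}$ descends to $\mathcal{Q}(\omega)$ by Proposition \ref{PF}, it suffices to prove these for an arbitrary local section $\mathcal{L}$ of $\mathscr{O}_T^{\rm an}$; thus the statement is intrinsic to the operators, and I treat $i=1$, the case $i=2$ being symmetric.

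Next I would decouple the two variables. Recall $\chi(\rho)=\zeta\cdot\phi_1(\rho_1)\cdot\phi_2(\rho_2)$, where $\zeta$ and $\phi_2(\rho_2)$ are independent of $\l_1$, while $\rho=(\rho_1,\rho_2)$ acts on $\l_1$ and $\l_2$ separately through $\underline{\rho}_1$ and $\underline{\rho}_2$ (Table \ref{groupactiontable}). As $\mathscr{D}_{\l_1}$ differentiates only in $\l_1$, multiplication by the $\l_1$-constant $\zeta\,\phi_2(\rho_2)$ commutes with it and cancels from both sides. Freezing $\l_2$, the $i=1$ identity is therefore equivalent to the one-variable identity
\begin{equation*}
\mathscr{D}_{\l_1}\bigl(\phi_1(\rho_1)^{-1}\,(g\circ\underline{\rho}_1)\bigr)=\d_1(\rho_1)^{-1}\,\phi_1(\rho_1)^{-1}\,\bigl((\mathscr{D}_{\l_1}g)\circ\underline{\rho}_1\bigr),
\end{equation*}
to be proved for every function $g$ of $\l_1$.

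To prove this one-variable identity I would use the solution space of $\mathscr{D}_{\l_1}$. Written in the variable $\mu=\underline{\rho}_1(\l_1)$, both sides are second-order linear operators of the form $\sum_{k\le 2}(\cdots)\,g^{(k)}(\mu)$, and two such operators with the same kernel and the same leading coefficient coincide (their difference is a first-order operator annihilating a two-dimensional space, hence zero by the nonvanishing of the Wronskian). The kernel of the right-hand operator is $\ker\mathscr{D}_{\l_1}$, that is, the local system $V_1$ of periods of $\omega_1$ on $C_1$. That the left-hand operator has the same kernel amounts to the assertion that $M_{\rho_1}\colon g\mapsto\phi_1(\rho_1)^{-1}(g\circ\underline{\rho}_1)$ preserves $V_1$; this is the single-curve shadow of the relation $\rho^*\omega=\chi(\rho)\,\omega$ proved inside Proposition \ref{Psidef}. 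Indeed, since $\omega$ descends from $pr_1^*\omega_1\wedge pr_2^*\omega_2$ and $\chi=\zeta\phi_1\phi_2$, the form $\omega_1$ satisfies $\rho_1^*\omega_1=\phi_1(\rho_1)\,\omega_1$ up to a constant, and the period computation of \eqref{sekibunprop} then shows that $M_{\rho_1}$ carries periods of $\omega_1$ to periods of $\omega_1$. Comparing leading coefficients, after substituting the definition of $\d_1$, reduces to
\begin{equation*}
\underline{\rho}_1'(\l_1)=\pm\frac{\underline{\rho}_1(\l_1)^2-\underline{\rho}_1(\l_1)+1}{\l_1^2-\l_1+1},
\end{equation*}
the $S_3$-covariance of the semi-invariant $\l^2-\l+1$; this is immediate on the transpositions $(0\ 1)$ and $(0\ 1/\lambda)$ and extends to all of $G_0$ because both sides are multiplicative.

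The main obstacle is precisely the preservation $M_{\rho_1}(V_1)=V_1$, i.e.\ confirming that the automorphy factor $\phi_1$ extracted from $\rho^*\omega=\chi(\rho)\omega$ is exactly the factor by which the hypergeometric period $\omega_1$ transforms. Once this is secured, the solution-space argument sidesteps any direct computation of the middle and constant coefficients of the two operators, which would otherwise demand delicate bookkeeping involving $\phi_1'/\phi_1$, $\underline{\rho}_1''$, and $\d_1$. A secondary simplification, used throughout, is that $\phi_1$, $\d_1$, and the assignment $\rho\mapsto(\underline{\rho}^2-\underline{\rho}+1)/(\l^2-\l+1)$ are all (co)cycles for the $G_0$-action, so that every identity needs to be checked only on the generators of $G_0$, exactly as the liftings in Proposition \ref{liftprop} and the tables for $\eta_i$ were reduced to generators.
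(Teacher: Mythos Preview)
Your reduction via Proposition~\ref{Psidef}(ii) and the decoupling of variables through $\chi(\rho)=\zeta\phi_1(\rho_1)\phi_2(\rho_2)$ match the paper's setup exactly. The divergence is in how you handle the resulting one-variable operator identity. The paper simply asserts that a direct computation gives
\[
\mathscr{D}_{\l_1}\cdot\chi(\rho)^{-1}=\chi(\rho)^{-1}\,\d_1(\rho_1)^{-1}\,\mathscr{D}^{\rho}_{\l_1},
\]
where $\mathscr{D}^{\rho}_{\l_1}$ is $\mathscr{D}_{\l_1}$ written in the variable $\l_1'=\rho^\sharp(\l_1)$, and then composes with $\rho^\sharp$. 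You instead characterise a second-order operator by its kernel and leading coefficient. Your leading-coefficient check is correct and has the pleasant by-product of explaining the otherwise mysterious cocycle $\d_i$: it is exactly the correction factor forced by the $S_3$-covariance $\underline{\rho}'(\l)=\pm(\underline{\rho}^2-\underline{\rho}+1)/(\l^2-\l+1)$.

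The gap is in the kernel step. You justify $M_{\rho_1}(V_1)=V_1$ by invoking $\rho^*\omega=\chi(\rho)\omega$ and the period identity~\eqref{sekibunprop}, but both of those statements live on $S$ and concern $\omega$, not $\omega_1$ on $C_1$. No $\widetilde{G^2}$- or $G$-action on $C_1$ has been constructed in the paper, so the phrase ``$\rho_1^*\omega_1=\phi_1(\rho_1)\omega_1$ up to a constant'' has no a~priori meaning. There are two honest fixes. One is to lift the $G$-action to $C_1\to R$ by a separate application of Proposition~\ref{liftprop} (this works, and the automorphy factor one finds is indeed $\phi_1$ up to roots of unity), but it is extra work not already in the paper. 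The other is to stay on $S$: by Proposition~\ref{PF} the period sheaf $\mathcal{P}(\omega)$ is locally the tensor product $V_1\otimes V_2$ of the hypergeometric local systems, and $\Psi_\rho$ preserves it by Proposition~\ref{Psidef}(i); since $\Psi_\rho$ visibly factors as $(\phi_1^{-1}\underline{\rho}_1^\sharp)\otimes(\zeta^{-1}\phi_2^{-1}\underline{\rho}_2^\sharp)$ with respect to that tensor decomposition, freezing $\l_2$ and restricting to a generic fibre recovers $M_{\rho_1}(V_1)\subset V_1$. Either route closes the gap, but as written your appeal to~\eqref{sekibunprop} elides this passage from $S$ to $C_1$. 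Note too that your approach does not really avoid computation: the leading-coefficient comparison plus the kernel argument is comparable in length to the paper's brute-force expansion, so the gain is conceptual clarity rather than brevity.
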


\begin{proof}
By the direct computation, we have 
$$\mathscr{D}_{\l_1} \cdot \chi(\rho)^{-1} = \chi(\rho)^{-1} \cdot \d_1(\rho_1)^{-1} \cdot \mathscr{D}^{\rho}_{\l_1}. $$
Here, $\mathscr{D}^{\rho}_{\l_1}$ is the differential operator defined by 
$$\mathscr{D}^{\rho}_{\l_1}=\l_1'(1-\l_1') \dfrac{\partial^2}{(\partial \l_1')^2} + 2\left(1-\frac{A}{N} - \l_1' \right) \dfrac{\partial}{\partial \l_1'} -\frac{A}{N}\left(1-\frac{A}{N}\right),
$$
where $\l_1'=\rho^{\sharp}(\l_1)$.
By definition, for a local section $\varphi$ of $\mathscr{O}_T^{\rm an}$, $\mathscr{D}^{\rho}_{\lambda_1}(\rho^\sharp(\varphi)) = \rho^\sharp(\mathscr{D}^{\rho}_{\lambda_1}(\varphi))$.
Thus, we have 
\begin{align*}
\mathscr{D}_{\l_1} (\Psi_{\rho}(\varphi)) &= \mathscr{D}_{\l_1} \left(\chi(\rho)^{-1} \cdot \rho^{\sharp}(\varphi) \right) = \chi(\rho)^{-1} \cdot \d_1(\rho_1)^{-1} \cdot \mathscr{D}_{\l_1} (\rho^{\sharp}(\varphi)) \\
&=\chi(\rho)^{-1} \cdot \d_1(\rho_1)^{-1} \cdot \rho^{\sharp} (\mathscr{D}_{\l_1}(\varphi)). 
\end{align*}
Similarly, we can prove that 
\begin{align*}
\mathscr{D}_{\l_2} (\Psi_{\rho}(\varphi))&=\chi(\rho)^{-1} \cdot \d_2(\rho_2)^{-1} \cdot \rho^{\sharp} (\mathscr{D}_{\l_2}(\varphi)). 
\end{align*}
In particular, we have $\mathscr{D}(\Psi_{\rho}(\varphi))= \Theta_{\rho}(\mathscr{D}(\varphi))$. 
Then, by Proposition \ref{Psidef}, we have 
$$\mathscr{D}(\langle\nu_{\rm tr} (\rho^* \xi),\omega\rangle) = \mathscr{D}(\Psi_{\rho}(\langle\nu_{\rm tr}(\xi),\omega\rangle)) = \Theta_{\rho} (\mathscr{D}(\langle\nu_{\rm tr}(\xi),\omega\rangle)). $$
\end{proof}

\section{Proof of the main theorem}\label{main}
In this section, we prove the main theorem.
First, we describe the $\widetilde{G^2}$-action on the images of normal functions of our families of cycles $\xi_i^{(j)}$ under the differential operators $\mathscr{D}_{\l_1},\mathscr{D}_{\l_2}$, explicitly.

\begin{prop}\label{mainprop}
For $i\in \Z/N\Z$,the images of $\nu_{\rm tr}(\xi_0^{(i)})$ and $\nu_{\rm tr}(\xi_1^{(i)})$ under the Picard-Fuchs differential operator $\mathscr{D}$ are as follows: 
\begin{align*}
&\mathscr{D}(\langle\nu_{\rm tr}(\xi_0^{(i)}),\omega\rangle)=\frac{(1-\zeta^A)\zeta^{Ai}}{\l_1-\l_2}
\begin{pmatrix}
1 \\
-1
\end{pmatrix}, \\
&\mathscr{D}(\langle\nu_{\rm tr}(\xi_1^{(i)}),\omega\rangle)=\frac{(1-\zeta^A)\zeta^{Ai}}{\l_1-\l_2}
\begin{pmatrix}
\dfrac{(1-\l_2)^{\frac{A}{N}}}{(1-\l_1)^{\frac{A}{N}}}\\
-\dfrac{(1-\l_2)^{\frac{N-A}{N}}}{(1-\l_1)^{\frac{N-A}{N}}}
\end{pmatrix}. 
\end{align*}
\end{prop}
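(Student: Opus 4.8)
The plan is to refine the Stokes-theorem argument of Theorem \ref{main:1} so that it applies to each cycle $\xi_0^{(i)}$ and $\xi_1^{(i)}$ separately, rather than only to the difference $\xi_1^{(0)}-\xi_0^{(0)}$. By Proposition \ref{h1zero} and the recipe of Section \ref{HigherChow}, the $1$-cycle $\gamma$ associated with $\xi_0^{(i)}$ (resp.\ $\xi_1^{(i)}$) bounds a $2$-chain, and this $1$-cycle is the union of an arc on the diagonal curve $Z$ (the pull-back of $[\infty,0]$ under $\psi_0^{(i)}$, resp.\ $\psi_1^{(i)}$) and an arc on the exceptional curve $Q_{(0,0)}$ (resp.\ $Q_{(1,1)}$). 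First I would apply $\mathscr{D}_{\l_1}$ and use the identity $\mathscr{D}_{\l_1}\bigl(x^{-A/N}(1-x)^{-A/N}(1-\l_1x)^{-A/N}\bigr)=\partial_{\l_1}H(\l_1,x)$ from the proof of Theorem \ref{main:1} to write $\mathscr{D}_{\l_1}\omega_t=d\beta_1$ with
\[
\beta_1=\frac{H(\l_1,x)}{\sqrt[N]{y^{N-A}(1-y)^{N-A}(1-\l_2y)^{N-A}}}\,dy,
\]
so that, after passing to the normal function (where $\mathscr{D}_{\l_1}$ kills the period contribution by Proposition \ref{PF}), the value $\mathscr{D}_{\l_1}\langle\nu_{\rm tr}(\xi_\bullet^{(i)}),\omega\rangle$ is computed by the boundary integral $\int_\gamma\beta_1$ over the associated $1$-cycle.

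The key simplification is that $\beta_1$ is a multiple of $dy$, while $y$ is constant on every exceptional curve $Q_{(\bullet,\bullet)}$; hence $\beta_1$ restricts to zero there and only the arc on $Z$ contributes. On $Z$ one has $x=y=z$ and $v^N=(1-\l_2z)/(1-\l_1z)$, and the powers of $z$ and $1-z$ cancel, leaving
\[
\beta_1|_Z=-\frac{A}{N}\frac{dz}{(1-\l_1z)^{1+\frac{A}{N}}(1-\l_2z)^{1-\frac{A}{N}}}.
\]
Substituting $u=v=\sqrt[N]{(1-\l_2z)/(1-\l_1z)}$ converts this into $-\frac{A}{\l_1-\l_2}u^{A-1}\,du$. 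For $\xi_0^{(i)}$ the arc joins the consecutive points $v=\zeta^{i}$ and $v=\zeta^{i+1}$ of $Z\cap Q_{(0,0)}$ (both over $z=0$), and since $u=v$ on $Z$ the integral is $-\frac{A}{\l_1-\l_2}\int_{\zeta^i}^{\zeta^{i+1}}u^{A-1}\,du=\frac{(1-\zeta^A)\zeta^{Ai}}{\l_1-\l_2}$, the first component for $\xi_0^{(i)}$. For $\xi_1^{(i)}$ the arc joins $v=\zeta^{i}c$ and $v=\zeta^{i+1}c$ with $c=\sqrt[N]{(1-\l_2)/(1-\l_1)}$ (the points of $Z\cap Q_{(1,1)}$, over $z=1$), and the same computation produces the extra factor $c^{A}=(1-\l_2)^{A/N}/(1-\l_1)^{A/N}$, giving the first component for $\xi_1^{(i)}$.

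The second components are obtained by the mirror computation with $\mathscr{D}_{\l_2}$: here $\mathscr{D}_{\l_2}\omega_t=d\beta_2$ with $\beta_2$ a multiple of $dx$ (so that it again vanishes on the exceptional curves, where $x$ is constant) and with the exponent $A$ replaced by $N-A$. The same substitution $u=v$ now yields $-\frac{N-A}{\l_1-\l_2}u^{-(N-A)-1}\,du$, and integrating between the same endpoints, together with $\zeta^{N-A}=\zeta^{-A}$, reproduces $-\frac{(1-\zeta^A)\zeta^{Ai}}{\l_1-\l_2}$ for $\xi_0^{(i)}$ and the corresponding power of $c$ for $\xi_1^{(i)}$ (consistently with the difference already computed in Theorem \ref{main:1}). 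The $\zeta^{Ai}$-dependence can equally be deduced once, from the case $i=0$, via the automorphism $\sigma_i\colon(x,y,v)\mapsto(x,y,\zeta^iv)$ of Lemma \ref{covering}: since $\sigma_i^*\xi_\bullet^{(i)}=\xi_\bullet^{(0)}$ and $\sigma_i^*\omega=\zeta^{Ai}\omega$, one gets $\nu_{\rm tr}(\xi_\bullet^{(i)})=\zeta^{Ai}\nu_{\rm tr}(\xi_\bullet^{(0)})$, and the general formula follows by $\mathscr{O}_T^{\rm an}$-linearity of $\mathscr{D}$.

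I expect the main obstacle to be the careful bookkeeping in this localized Stokes argument. Two points require attention: first, justifying that the chosen primitive $\beta_j$ of $\mathscr{D}_{\l_j}\omega_t$ extends across the exceptional locus and genuinely restricts to zero on each $Q_{(\bullet,\bullet)}$, so that no boundary term is lost there and the integral truly localizes on the diagonal arc; and second, matching the two endpoints of each $Z$-arc with the correct consecutive roots of unity $\zeta^{i},\zeta^{i+1}$ (scaled by $c$ in the $\xi_1$-case) under the identification $u=v$, keeping all path orientations and signs consistent so that the factor $1-\zeta^A$ (rather than its negative) appears.
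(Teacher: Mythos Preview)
Your approach is correct and takes a genuinely different route from the paper. Rather than re-running the Stokes argument for each cycle, the paper reduces everything to the single computation of Theorem~\ref{main:1} via a group-theoretic averaging trick: it picks $\rho=(\rho_1,\mathrm{id},1)\in\widetilde{G^2}$ with $\rho_1^\sharp(\sqrt[N]{1-\l_1})=\zeta\sqrt[N]{1-\l_1}$ and $\rho_1^\sharp(\sqrt[N]{\l_1})=\sqrt[N]{\l_1}$, checks from \eqref{vexplicit} that $(\rho^i)^*\xi_1^{(0)}=\xi_1^{(0)}$ while $(\rho^i)^*\xi_0^{(0)}=\xi_0^{(i)}$, and uses the telescoping relation $\sum_{i=0}^{N-1}\xi_0^{(i)}=0$. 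Summing $\Theta_{\rho^i}$ applied to $\mathscr{D}(\langle\nu_{\rm tr}(\xi_1^{(0)}-\xi_0^{(0)}),\omega\rangle)$ (known from Theorem~\ref{main:1} and Proposition~\ref{thetadef}, with $\chi(\rho^i)=\zeta^{i(N-A)}$ and $\d_1=\d_2=1$) over $i=0,\dots,N-1$ then isolates $N\cdot\mathscr{D}(\langle\nu_{\rm tr}(\xi_1^{(0)}),\omega\rangle)$, and subtraction recovers the $\xi_0^{(0)}$-term; the general $i$ follows from Lemma~\ref{covering} as you also note. Your direct integration is more elementary and does not rely on the machinery of Section~\ref{GroupAction}; on the other hand, the paper's route completely sidesteps both of your acknowledged obstacles, since it never has to analyze the primitive $\beta_j$ near the exceptional locus or track endpoints and orientations of individual arcs on $Z$ --- all analytic work is confined to the single explicit chain $K_0$ already handled in Theorem~\ref{main:1}, and the remainder is purely formal manipulation with $\Theta_\rho$.
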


\begin{proof}
By Lemma \ref{covering}, it is enough to show in the case $i=0$.
Let $\rho=(\rho_1, {\rm id}, 1) \in \widetilde{G^2}$, where $\rho_1\in \Aut(R/R_0)$ is defined by
$\rho_1(\sqrt[N]{1-\l_1})=\zeta \sqrt[N]{1-\l_1}$ and $\rho_1(\sqrt[N]{\l_1}) = \sqrt[N]{\l_1}$. 
Let $\rho^i=\underbrace{\rho\circ \cdots \circ \rho}_i$.
Then we have 
\begin{align*}
& \d_1(\rho_1^i)= \d_2(\mathrm{id})=1, \\
&\chi(\rho^i)= 1 \cdot \phi_1(\rho_1^i) \cdot \phi_2({\rm id})=\zeta^{i(N-A)}. 
\end{align*}
By the local description of the $\widetilde{G^2}$-action in \eqref{vexplicit}, we see that 
\begin{equation}\label{rhotransformula2}
(\rho^i)^*(\xi_1^{(0)})=\xi_1^{(0)}, \quad (\rho^i)^*(\xi_0^{(0)})=\xi_0^{(i)}. 
\end{equation}
By the definition of $\xi_0^{(j)}$ and $\gcd(N,A)=1$, we have 
$$\sum_{i=0}^{N-1} (\rho^i)^*(\xi_0^{(0)})=\sum_{i=0}^{N-1} \xi_0^{(i)}=0. $$
On the other hand, by Theorem \ref{main:1} and Proposition \ref{thetadef}, we have 
\begin{align*}
\mathscr{D}(\langle \nu_{\rm tr}((\rho^i)^*(\xi_1^{(0)} -\xi_0^{(0)})),\omega\rangle) 
&=\Theta_{\rho^i} (\mathscr{D}(\langle \nu_{\rm tr} (\xi_1^{(0)} - \xi_0^{(0)}),\omega\rangle) \\
&=\dfrac{1-\zeta^A}{\l_1-\l_2} 
\begin{pmatrix}
-\zeta^{Ai} + \dfrac{(1-\l_2)^{\frac{A}{N}}}{(1-\l_1)^{\frac{A}{N}}} \\
\zeta^{Ai}- \dfrac{(1-\l_1)^{\frac{N-A}{N}}}{(1-\l_2)^{\frac{N-A}{N}}} \\
\end{pmatrix}. 
\end{align*}
Since $\gcd(N, A)=1$, we have 
\begin{align*}
\sum_{i=0}^{N-1} \mathscr{D}(\langle\nu_{\rm tr} ((\rho^i)^*(\xi_1^{(0)}-\xi_0^{(0)}),\omega\rangle)
= \dfrac{1-\zeta^A}{\l_1-\l_2} 
\begin{pmatrix}
N \cdot \dfrac{(1-\l_2)^{\frac{A}{N}}}{(1-\l_1)^{\frac{A}{N}}} \\
-N \cdot\dfrac{(1-\l_1)^{\frac{N-A}{N}}}{(1-\l_2)^{\frac{N-A}{N}}} \\
\end{pmatrix}
. 
\end{align*}
By the relation \eqref{rhotransformula2}, we have
\begin{align*}
\mathscr{D}(\langle \nu_{\rm tr}((\rho^i)^*(\xi_1^{(0)} -\xi_0^{(0)})),\omega\rangle) &= \mathscr{D}(\langle \nu_{\rm tr}((\rho^i)^*(\xi_1^{(0)} )),\omega\rangle)-\mathscr{D}(\langle \nu_{\rm tr}((\rho^i)^*(\xi_0^{(0)} )),\omega\rangle)\\
&=\mathscr{D}(\langle\nu_{\mathrm{tr}}(\xi_1^{(0)}),\omega\rangle)-\mathscr{D}(\langle\nu_{\mathrm{tr}}(\xi_0^{(i)}),\omega\rangle).
\end{align*}
Therefore, we conclude that 
\begin{align*}
&\mathscr{D} (\langle \nu_{\rm tr}(\xi_1^{(0)}), \omega \rangle)=\dfrac{1-\zeta^A}{\l_1-\l_2}
\begin{pmatrix}
 \dfrac{(1-\l_2)^{\frac{A}{N}}}{(1-\l_1)^{\frac{A}{N}}} \\
- \dfrac{(1-\l_1)^{\frac{N-A}{N}}}{(1-\l_2)^{\frac{N-A}{N}}} \\
\end{pmatrix}, \\
&\mathscr{D} (\langle \nu_{\rm tr}(\xi_0^{(0)}), \omega \rangle)=\dfrac{1-\zeta^A}{\l_1-\l_2}
\begin{pmatrix}
1 \\
-1 \\
\end{pmatrix}
, 
\end{align*}
which finishes the proof. 
\end{proof}

We define the $\Q$-linear subspace $\Xi^{\rm can}$ of $\Gamma(T, \mathcal{Q})$ by 
\begin{align*}
& \Xi^{\rm can} = \left\langle \nu_{\rm tr}(\xi_0^{(i)}), \nu_{\rm tr}(\xi_1^{(i)}) \mid i \in \Z/N\Z \ \right\rangle_{\Q}. 
\end{align*}
By Proposition \ref{mainprop}, we have 
$$\dim_{\Q} \Xi^{\rm can}= 2 \cdot \varphi(N). $$

Let $\Delta_0\subset G_0^2$ be the subgroup $\{(\underline{\rho},\underline{\rho})\in G_0^2\mid \underline{\rho}\in G_0\}$ and $\Delta$ be the inverse image of $\Delta_0$ by $\widetilde{G^2}\to G^2\to G_0^2$.
Similarly, let $I_0 = \langle((0 \ 1),(0\ 1))\rangle \subset\Delta_0$ and $I\subset \widetilde{G}^2$ be the inverse image of $I_0$.
We define the $\Q$-linear subspace $\Xi_\Delta$ and $\Xi$ by
$$\Xi_{\Delta}= \Q[\Delta] \cdot \Xi^{\rm can}, \quad \Xi= \Q[\widetilde{G^2}] \cdot \Xi^{\rm can}, $$
i.e. the $\Q$-linear subspace of $\Gamma(T,\mathcal{Q})$ generated by the images of $\Xi^{\mathrm{can}}$ under the $\Delta$ and $\widetilde{G^2}$ action, respectively.

\begin{thm}
Suppose that $N\neq 2$. Then we have 
$$\dim_\Q \Xi_\Delta=6 \cdot \varphi(N). $$
\end{thm}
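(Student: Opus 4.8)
The plan is to push everything through the Picard--Fuchs operator into the concrete module $(\mathscr{O}_T^{\rm an})^{\oplus 2}$ and to prove the two inequalities separately. Throughout I treat the relevant spaces as modules over $\Q(\zeta)$: the scalars $\zeta^{Ai}$ produced by Lemma \ref{covering} and Proposition \ref{mainprop}, together with the relation $\sum_{i}\zeta^{Ai}=0$ (valid since $\gcd(A,N)=1$), show that the $\mu_N$-twist contributes exactly the factor $\varphi(N)=\dim_\Q\Q(\zeta)$, so the whole statement reduces to showing that the diagonal $S_3$ multiplies the $\Q(\zeta)$-dimension by the factor $6$. Recall that $\dim_\Q\Xi^{\rm can}=2\varphi(N)$, i.e. $\dim_{\Q(\zeta)}\Xi^{\rm can}=2$.

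For the upper bound I would first check that $\Xi^{\rm can}$ is stable under the subgroup $I\subset\Delta$. Indeed, the kernel of $\Delta\to\Delta_0$ permutes the cycles $\xi_j^{(i)}$ among themselves, as in the relation \eqref{rhotransformula2}, while the transposition $(0\,1)$, which acts on $\P^1$ by $z\mapsto 1-z$ (Table \ref{groupactiontable}), interchanges the corners $(0,0)$ and $(1,1)$ and hence the families $\xi_0^{(\bullet)}$ and $\xi_1^{(\bullet)}$ up to the cocycle factors of Proposition \ref{Psidef}. Since $[\Delta:I]=[\Delta_0:I_0]=3$, a choice of coset representatives $g_1,g_2,g_3$ of $\Delta/I$ gives
\begin{equation*}
\Xi_\Delta=\sum_{k=1}^{3} g_k^{*}\,\Xi^{\rm can},
\end{equation*}
and as each $g_k^{*}$ is invertible we obtain $\dim_\Q\Xi_\Delta\le 3\cdot\dim_\Q\Xi^{\rm can}=6\,\varphi(N)$.

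For the lower bound I would compose with $\Phi:=\mathscr{D}\circ\langle-,\omega\rangle$, which maps $\Gamma(T,\mathcal{Q})$ into $(\mathscr{O}_T^{\rm an})^{\oplus 2}$ by Proposition \ref{PF}, so that $\dim_\Q\Xi_\Delta\ge\dim_\Q\Phi(\Xi_\Delta)$. By Proposition \ref{thetadef} one has $\Phi(\nu_{\rm tr}(\rho^{*}\xi_j^{(i)}))=\Theta_{\rho}\bigl(\mathscr{D}\langle\nu_{\rm tr}(\xi_j^{(i)}),\omega\rangle\bigr)$, and since $\rho^{*}\Xi^{\rm can}=\Xi^{\rm can}$ for $\rho\in I$ this yields $\Phi(\Xi_\Delta)=\sum_{k=1}^{3}\Theta_{g_k}\Phi(\Xi^{\rm can})$. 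Each summand is the image under $\Theta_{g_k}$ of the two explicit vectors of Proposition \ref{mainprop}; applying $\Theta_{g_k}$ replaces the prefactor $1/(\lambda_1-\lambda_2)$ by $1/(\underline{g_k}(\lambda_1)-\underline{g_k}(\lambda_2))$ and scales the two entries by the cocycles $\chi(g_k)^{-1}\delta_i((g_k)_i)^{-1}$ of Proposition \ref{thetadef}. It therefore remains to prove that the three $2$-dimensional $\Q(\zeta)$-subspaces $\Theta_{g_k}\Phi(\Xi^{\rm can})$ are in direct sum, i.e. that the six resulting function-vectors are $\Q(\zeta)$-linearly independent in $(\mathscr{O}_T^{\rm an})^{\oplus 2}$; this forces $\dim_{\Q(\zeta)}\Phi(\Xi_\Delta)=6$ and closes the argument.

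The main obstacle is exactly this independence of the six vectors. Concretely one must rule out any nontrivial $\Q(\zeta)$-relation among the transformed prefactors $1/(\underline{g_k}(\lambda_1)-\underline{g_k}(\lambda_2))$ and the algebraic functions $(1-\lambda_2)^{A/N}/(1-\lambda_1)^{A/N}$ and $(1-\lambda_2)^{(N-A)/N}/(1-\lambda_1)^{(N-A)/N}$ and their $\underline{g_k}$-transforms, after clearing the cocycle factors recorded in Table \ref{1cocycleeta}. This is where the hypothesis $N\neq2$ enters: for $N>2$ one has $A\neq N-A$, so the two entries of the $\xi_1$-vector scale by genuinely different fractional powers and cannot cancel against the $\xi_0$-vector or against the translates from other cosets, whereas for $N=2$ (where $A=N-A=1$) these functions coincide and the three subspaces collapse, matching the remark in the introduction that $\Delta$ produces no new cycles when $N=2$. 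I would carry out the rank computation either by specializing $(\lambda_1,\lambda_2)$ to suitable values, or by comparing the distinct pole orders along $\lambda_1=\lambda_2$ and the distinct branching of the radicals along $\lambda_i=0,1$, so as to certify that the coefficient matrix of the six vectors has full rank over $\Q(\zeta)$.
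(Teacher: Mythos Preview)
Your overall plan matches the paper's: the upper bound via $I$-stability of $\Xi^{\rm can}$, the lower bound by pushing through $\mathscr{D}$ using $\Theta_\rho$, the reduction to a $\Q(\zeta)$-independence check of six explicit functions, and the identification of $N\neq 2$ with $2A\neq N$ are all exactly what the paper does. Two points deserve comment.

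First, a simplification you miss: the paper uses only the first component $\mathscr{D}_{\lambda_1}$, not the full vector $\mathscr{D}$. After choosing lifts of $(\mathrm{id},\mathrm{id})$, $((0\,1/\lambda),(0\,1/\lambda))$, $((1\,1/\lambda),(1\,1/\lambda))$ and applying Proposition \ref{thetadef}, one factors out the common $(\lambda_1-\lambda_2)^{-1}$ and substitutes $X=(\lambda_2/\lambda_1)^{1/N}$, $Y=((1-\lambda_2)/(1-\lambda_1))^{1/N}$; the six functions then become the six monomials
\[
1,\quad Y^{A},\quad X^{N-2A}Y^{2A-N},\quad X^{N-A}Y^{2A-N},\quad X^{N-A},\quad X^{N-2A}Y^{A},
\]
whose exponent vectors are pairwise distinct precisely because $N-2A\neq 0$. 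This makes the independence immediate, and is cleaner than the rank computation you sketch.

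Second, a small warning about your proposed endgame. Your idea of distinguishing the three cosets by ``pole orders along $\lambda_1=\lambda_2$'' does not work here: because you are acting by \emph{diagonal} elements $(\underline{\rho},\underline{\rho})\in G_0^2$, each representative sends $\lambda_1-\lambda_2$ to a unit times $\lambda_1-\lambda_2$, so all six functions share the same simple pole along $\lambda_1=\lambda_2$ and this invariant cannot separate them. (This is exactly why the later Theorem \ref{mainthm}, for cosets of $\Delta$ in $\widetilde{G^2}$, \emph{can} use pole loci: non-diagonal elements move $\lambda_1=\lambda_2$ to a different divisor.) Your alternative suggestion, tracking the branching of the radicals along $\lambda_i=0,1$, is the one that goes through, and is exactly what the $X,Y$ substitution encodes.
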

\begin{proof}
Since the $I_0$-action on $\P^1\times \P^1\times T$ stabilizes the family of diagonal curves $D$, the $I$-action on $S$ stabilizes the family of curves $Z$.
Furthermore, the $I$-action stabilizes the set $\{Q_{(0,0)}, Q_{(1,1)}\}$ of families of curves.
Thus, for geometric reason, the group $I$ stabilizes $\Xi^{\mathrm{can}}$.
Hence we have 
$$\dim_\Q \Xi_\Delta\le |\Delta/I|\cdot \dim_\Q \Xi^{\mathrm{can}} = |\Delta_0/I_0|\cdot 2\cdot \varphi(N)= 6\cdot \varphi(N).$$
We will show that the inverse direction of the inequality.
The representative of $\Delta_0/I_0$ is given by 
$$(\mathrm{id},\mathrm{id}),\quad ((0 \ 1/\l), (0 \ 1/\l)), \quad ((1 \ 1/\l), (1 \ 1/\l)). $$
Let $\tau\in G$ be the automorphism defined by
$$
\tau^\sharp\left(\sqrt[N]{\lambda}\right) = \sqrt[N]{1-\lambda},\quad \tau^\sharp\left(\sqrt[N]{1-\lambda}\right) = \sqrt[N]{\lambda}.
$$
Then $\rho=(\tau,\tau,-1)\in\widetilde{G^2}$ is a lift of $((0 \ 1/\l), (0 \ 1/\l))$.
We calculate $\mathscr{D}(\langle \nu_{\mathrm{tr}}(\rho^*\xi_0^{(0)}),\omega\rangle)$ and $\mathscr{D}(\langle \nu_{\mathrm{tr}}(\rho^*\xi_1^{(0)}),\omega\rangle)$.
Since we have 
$$
\chi(\rho) = \lambda_1^{\frac{N-2A}{N}}(1-\lambda_1)^{\frac{2A-N}{N}} \lambda_2^{\frac{2A-N}{N}}(1-\lambda_2)^{\frac{N-2A}{N}},\quad \delta_1(\tau)=\delta_2(\tau)=1,
$$
by Proposition \ref{thetadef}, 
\begin{align*}
&\mathscr{D}_{\l_1}(\langle \nu_{\mathrm{tr}}(\rho^*\nu(\xi_0^{(0)})),\omega\rangle)
=-\frac{1-\zeta^A}{\l_1-\l_2}\l_1^{\frac{2A-N}{N}}(1-\lambda_1)^{\frac{N-2A}{N}}\l_2^{\frac{N-2A}{N}}(1-\lambda_2)^{\frac{2A-N}{N}}, \\
&\mathscr{D}_{\l_1}(\langle \nu_{\mathrm{tr}}(\rho^*\nu(\xi_1^{(0)})),\omega\rangle)
=-\frac{1-\zeta^A}{\l_1-\l_2}\l_1^{\frac{A-N}{N}}(1-\lambda_1)^{\frac{N-2A}{N}}\l_2^{\frac{N-A}{N}}(1-\lambda_2)^{\frac{2A-N}{N}}.
\end{align*}
Similarly, let $\tau'\in G$ be the automorphism defined by
$$
(\tau')^\sharp\left(\sqrt[N]{\l}\right) = \frac{1}{\sqrt[N]{\l}},\quad (\tau')^\sharp\left(\sqrt[N]{1-\l}\right) = \zeta'\cdot \frac{\sqrt[N]{1-\l}}{\sqrt[N]{\l}}
$$
where $\zeta'$ is a $N$th root of $-1$.
Then $\rho' = (\tau',\tau',-1)\in\widetilde{G^2}$ is a lift of 
$((1 \ 1/\l), (1 \ 1/\l))$.
Since we have 
$$
\chi(\rho') = \l_1^{\frac{N-A}{N}}\l_2^{\frac{A}{N}},\quad \delta_1(\tau') =-\l_1,\quad \delta_2(\tau')=-\l_2,
$$
by Proposition \ref{thetadef}, 
\begin{align*}
&\mathscr{D}_{\l_1}(\langle \nu_{\mathrm{tr}}((\rho')^*\nu(\xi_0^{(0)})),\omega\rangle)
=\frac{1-\zeta^A}{\l_1-\l_2}\l_1^{\frac{A-N}{N}}\l_2^{\frac{N-A}{N}}, \\
&\mathscr{D}_{\l_1}(\langle \nu_{\mathrm{tr}}((\rho')^*\nu(\xi_1^{(0)})),\omega\rangle)
=\frac{1-\zeta^A}{\l_1-\l_2}\l_1^{\frac{2A-N}{N}}(1-\lambda_1)^{-\frac{A}{N}}\l_2^{\frac{N-2A}{N}}(1-\lambda_2)^{\frac{A}{N}}.
\end{align*}
We will show that 
\begin{equation}\label{basis}
\begin{aligned}
&\mathscr{D}_{\l_1}(\langle \nu_{\mathrm{tr}}(\nu(\xi_0^{(0)})),\omega\rangle),&&
\mathscr{D}_{\l_1}(\langle \nu_{\mathrm{tr}}(\nu(\xi_1^{(0)})),\omega\rangle),
&&\mathscr{D}_{\l_1}(\langle \nu_{\mathrm{tr}}(\rho^*\nu(\xi_0^{(0)})),\omega\rangle),\\
&\mathscr{D}_{\l_1}(\langle \nu_{\mathrm{tr}}(\rho^*\nu(\xi_1^{(0)})),\omega\rangle),
&&\mathscr{D}_{\l_1}(\langle \nu_{\mathrm{tr}}((\rho')^*\nu(\xi_0^{(0)})),\omega\rangle),
&&\mathscr{D}_{\l_1}(\langle \nu_{\mathrm{tr}}((\rho')^*\nu(\xi_1^{(0)})),\omega\rangle)
\end{aligned}
\end{equation}
are linearly independent over $\Q(\zeta)$.
If not, there exists nontrivial vector $(c_1,\dots,c_6)\in \Q(\zeta)^{\oplus 6}$ such that
\begin{align*}
&c_1+c_2(1-\l_1)^{-\frac{A}{N}}(1-\l_2)^{\frac{A}{N}} + c_3\l_1^{\frac{2A-N}{N}}(1-\lambda_1)^{\frac{N-2A}{N}}\l_2^{\frac{N-2A}{N}}(1-\lambda_2)^{\frac{2A-N}{N}} \\
&+c_4\l_1^{\frac{A-N}{N}}(1-\lambda_1)^{\frac{N-2A}{N}}\l_2^{\frac{N-A}{N}}(1-\lambda_2)^{\frac{2A-N}{N}} +c_5\l_1^{\frac{A-N}{N}}\l_2^{\frac{N-A}{N}}\\
&+c_6\l_1^{\frac{2A-N}{N}}(1-\lambda_1)^{-\frac{A}{N}}\l_2^{\frac{N-2A}{N}}(1-\lambda_2)^{\frac{A}{N}}=0.
\end{align*}
If we set $X=\left(\frac{\l_2}{\l_1}\right)^\frac{1}{N}$ and $Y=\left(\frac{1-\l_2}{1-\l_1}\right)^{\frac{1}{N}}$, 
the above relation is
$$
c_1 + c_2Y^{A} + c_3X^{N-2A}Y^{2A-N} + c_4X^{N-A}Y^{2A-N} +c_5X^{N-A} + c_6X^{N-2A}Y^{A}=0.
$$
By the assumption, we have $2A-N\neq 0$, and $X$ and $Y$ are $\C$-linearly independent, the above relation implies $c_1=c_2=\cdots=c_6=0$. 
This contradicts to the assumption.
The $\Q$-linear subspace $\Xi_\Delta$ contains the $\zeta^i$ times each member in \eqref{basis} for $i=0,1,\dots, N-1$.
Thus each member in \eqref{basis} generates $\varphi(N)$-dimensional subspace $V_1,V_2,\dots,V_6$.
By the $\Q(\zeta)$-linearly independence of \eqref{basis} 
implies that the natural map $V_1\oplus V_2\oplus\cdots\oplus V_6\to \Xi_\Delta$ is injective.
This proves the inverse of the inequality.
\end{proof}

\begin{rmk}
When $N=2$ (the case considered in \cite{Sato}), \eqref{basis} are not linearly independent over $\Q$, and generate only a 
$3$-dimensional $\Q$-linear subspace. 
\end{rmk}

The above proof shows that for each $\nu\in \Xi_\Delta$, $\mathscr{D}(\langle\nu,\omega\rangle)$ can be expressed by
\begin{equation}\label{normalfunc}
\begin{aligned}
\mathscr{D}_{\l_1}(\langle\nu,\omega\rangle)
= \frac{1}{\l_1-\l_2}(&c_1 + c_2Y^{A} + c_3X^{N-2A}Y^{2A-N}\\
&+ c_4X^{N-A}Y^{2A-N} +c_5X^{N-A} + c_6X^{N-2A}Y^{A}),  
\end{aligned}
\end{equation}
where $c_1,c_2,\dots,c_6\in \Q(\zeta)$. 
We regard the right-hand side of \eqref{normalfunc} as a rational function on $\overline{T}=R\times R$.
Then we have the following.

\begin{lem}\label{polelemma}
For $i,j\in\{0,1,\dots,N-1\}$, let $F_{i,j}$ be the curve on $\overline{T}$ defined by the following equations
$$
\zeta^i\l_1^{\frac{1}{N}}-\l_2^{\frac{1}{N}}=0,\quad (1-\l_2)^{\frac{1}{N}}-\zeta^j(1-\l_1)^{\frac{1}{N}}=0. 
$$
Then the right-hand side of \eqref{normalfunc} has a pole along at least one $F_{i,j}$ if $\nu\neq 0$.
\end{lem}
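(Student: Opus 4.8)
The plan is to read the right-hand side of \eqref{normalfunc} as $P(X,Y)/(\l_1-\l_2)$, where $P(X,Y)$ denotes the bracketed numerator and $X=\sqrt[N]{\l_2}/\sqrt[N]{\l_1}$, $Y=\sqrt[N]{1-\l_2}/\sqrt[N]{1-\l_1}$, and to test for a pole along each $F_{i,j}$ separately. First I would observe that on $R$ the elements $\l$ and $1-\l$ are invertible, hence so are $\sqrt[N]{\l}$ and $\sqrt[N]{1-\l}$; thus $X$ and $Y$ are regular and nowhere vanishing on $\overline T=R\times R$, and $P(X,Y)$, being a combination of (possibly negative) powers of $X$ and $Y$, is a regular function on $\overline T$. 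Since $F_{i,j}$ is by definition the locus $\{X=\zeta^i,\ Y=\zeta^j\}$ — the graph of the automorphism $(\sqrt[N]\l,\sqrt[N]{1-\l})\mapsto(\zeta^i\sqrt[N]\l,\zeta^j\sqrt[N]{1-\l})$ of $R$, and hence an irreducible curve — the function $P(X,Y)$ restricts on $F_{i,j}$ to the constant $P(\zeta^i,\zeta^j)$, while $\l_1-\l_2$ vanishes there (because $\l_1=\l_2$ on $F_{i,j}$), to positive order since it is not identically zero. Therefore the right-hand side of \eqref{normalfunc} acquires a pole along the prime divisor $F_{i,j}$ as soon as $P(\zeta^i,\zeta^j)\neq0$, and the lemma reduces to showing that $P(\zeta^i,\zeta^j)\neq0$ for at least one pair $(i,j)$.

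Next I would translate this into a statement about characters of $\mu_N\times\mu_N$ and argue by contraposition. Because $\nu\neq0$, the previous theorem (the $\Q(\zeta)$-linear independence of the functions in \eqref{basis}) guarantees that the coefficients $c_1,\dots,c_6$ are not all zero. Suppose, for contradiction, that $P(\zeta^i,\zeta^j)=0$ for every $i,j\in\Z/N\Z$. Each monomial $X^aY^b$ restricts on $\mu_N\times\mu_N$ to the character $(\zeta^i,\zeta^j)\mapsto\zeta^{ai+bj}$, which depends only on $(a,b)\bmod N$, and distinct characters of the finite abelian group $\mu_N\times\mu_N$ are linearly independent as functions on it. Hence, provided the six monomials of $P$ give pairwise distinct characters, the vanishing of $P$ on all of $\mu_N\times\mu_N$ forces $c_1=\dots=c_6=0$, contradicting the previous sentence.

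The crux — and the step I expect to be the only real obstacle — is therefore the combinatorial verification that the six exponent pairs
\[
(0,0),\quad(0,A),\quad(-2A,2A),\quad(-A,2A),\quad(-A,0),\quad(-2A,A)
\]
(reduced modulo $N$) are pairwise distinct. I would organize this by the first coordinate: the three values $0,-A,-2A$ are mutually incongruent modulo $N$ because $A\not\equiv0$ and, using $\gcd(N,A)=1$ together with $N\neq2$, also $2A\not\equiv0\pmod N$; and within each of the three groups the two available second coordinates (namely $\{0,A\}$, $\{0,2A\}$, and $\{A,2A\}$) are incongruent for the same two reasons. This finite check, which is exactly where the hypotheses $\gcd(N,A)=1$ and $N\neq2$ enter, completes the argument.
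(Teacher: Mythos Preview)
Your argument is correct and is essentially the paper's own proof: both prove the contrapositive by observing that $\lambda_1-\lambda_2$ vanishes on each $F_{i,j}$ while $X,Y$ restrict there to $\zeta^i,\zeta^j$, so that regularity along every $F_{i,j}$ forces $P(\zeta^i,\zeta^j)=0$ for all $(i,j)$, and then deduce $c_1=\cdots=c_6=0$ from linear independence of the six characters. The only difference is organizational---the paper separates variables, first letting $i$ run over $\Z/N\Z$ and then $j$, whereas you verify in one stroke that the six exponent pairs are pairwise distinct modulo $N$; you are also more explicit than the paper that this distinctness uses $N\neq 2$ in addition to $\gcd(N,A)=1$.
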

\begin{proof}
It is enough to show that if the right-hand side of (\ref{normalfunc}) has no poles along all $F_{i,j}$, then we have $c_1=c_2=\cdots =c_6=0$.
Note that $\l_1-\l_2$ has a pole along each $F_{i,j}$.
Therefore, if (\ref{normalfunc}) has no poles along all $F_{i,j}$, 
$$c_1 + c_2Y^{A} + c_3X^{N-2A}Y^{2A-N}+ c_4X^{N-A}Y^{2A-N} +c_5X^{N-A} + c_6X^{N-2A}Y^{A}
$$
has zero along all $F_{i,j}$.
On $F_{i,j}$, we have $X=\zeta^i$ and $Y=\zeta^j$, hence this implies
$$c_1 + c_2\zeta^{Aj} + c_3\zeta^{(N-2A)i}\zeta^{(2A-N)j}+ c_4\zeta^{(N-A)i}\zeta^{(2A-N)j} +c_5\zeta^{(N-A)i} + c_6\zeta^{(N-2A)i}\zeta^{Aj}=0
$$
for any $i,j$, i.e. 
$$(c_1 + c_2\zeta^{Aj}) + (c_3\zeta^{(2A-N)j}+c_6\zeta^{Aj})
\zeta^{(N-2A)i}+ (c_4\zeta^{(2A-N)j}+c_5)\zeta^{(N-A)i}=0. 
$$
By the assumption \eqref{gcdassumption}, if $i$ runs over $0,1,\ldots, N-1$, this relation implies 
$$c_1 + c_2\zeta^{Aj}=c_3\zeta^{(2A-N)j}+c_6\zeta^{Aj}=c_4\zeta^{(2A-N)j}+c_5=0.$$
If $j$ runs over $0,1,\ldots, N-1$, these relations implies
$c_1=c_2=\cdots =c_6=0$.
Thus we have the result.
\end{proof}

\begin{thm} \label{mainthm}
Suppose that $N\neq 2$.
We have
$$\dim_\Q \Xi \ge 36 \cdot \varphi(N). $$
In particular, we have $\rk \mathrm{CH}^2(S_t,1)_{\mathrm{ind}}\ge 36\cdot \varphi(N)$ for very general $t\in T$.
\end{thm}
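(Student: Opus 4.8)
The plan is to realize $\Xi$ as a sum of six translates of $\Xi_\Delta$ and to tell these translates apart by the location of the poles of their images under $\mathscr{D}_{\l_1}$. Since $\Delta$ is the full preimage of $\Delta_0$ and $\Xi_\Delta=\Q[\Delta]\cdot\Xi^{\rm can}$ is $\Delta$-stable by construction, one has
\[
\Xi=\Q[\widetilde{G^2}]\cdot\Xi^{\rm can}=\sum_{\rho\Delta\in\widetilde{G^2}/\Delta}\rho^{*}\Xi_\Delta,
\]
and $|\widetilde{G^2}/\Delta|=|G_0^2/\Delta_0|=6$. Because $\dim_\Q\Xi_\Delta=6\varphi(N)$ by the previous theorem and each $\rho^{*}$ is an isomorphism, it suffices to show this sum of six subspaces is direct; then $\dim_\Q\Xi=36\varphi(N)$, and the bound $\ge 36\varphi(N)$ follows.

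The separation tool I would use is Proposition \ref{thetadef} together with Lemma \ref{polelemma}. For $\rho=(\rho_1,\rho_2,\zeta)$ with image $(\sigma_1,\sigma_2)\in G_0^2$ and $\mu\in\Xi_\Delta$, Proposition \ref{thetadef} gives $\mathscr{D}(\langle\nu_{\rm tr}(\rho^{*}\mu),\omega\rangle)=\Theta_\rho(\mathscr{D}(\langle\nu_{\rm tr}(\mu),\omega\rangle))$, where $\Theta_\rho$ is $\rho^{\sharp}$ followed by multiplication by the cocycle factors $\chi(\rho)^{-1}\d_i(\rho_i)^{-1}$. The first component $\mathscr{D}_{\l_1}(\langle\nu_{\rm tr}(\mu),\omega\rangle)$ has the shape \eqref{normalfunc}, so by Lemma \ref{polelemma} it has a pole along at least one $F_{i,j}$ when $\mu\neq0$, and every $F_{i,j}$ lies over $\{\l_1=\l_2\}\subset R_0\times R_0$. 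Since $\rho^{\sharp}$ sends a pole along $F_{i,j}$ to a pole along $\rho^{-1}(F_{i,j})$, which lies over the graph $\Gamma_g=\{\sigma_1(\l_1)=\sigma_2(\l_2)\}$ of the M\"obius transformation $g=\sigma_2^{-1}\sigma_1\in G_0$, and since $\chi(\rho)^{-1}\d_1(\rho_1)^{-1}$ is a monomial in $\l_i,1-\l_i$ whose divisor sits on the boundary $\{\l_i\in\{0,1,\infty\}\}$ (hence is a non-vanishing regular function along the dominant curve $\Gamma_g$), the element $\mathscr{D}_{\l_1}(\langle\nu_{\rm tr}(\rho^{*}\mu),\omega\rangle)$ keeps a genuine pole over $\Gamma_g$, and its only poles over a diagonal-type divisor lie over $\Gamma_g$.

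Next I would arrange the six cosets so that their graphs $\Gamma_g$ are pairwise distinct. Reading off Table \ref{groupactiontable}, the six loci removed in the definition of $T_0$ are exactly the graphs $\{\l_1=g(\l_2)\}$ for the six $g\in G_0\cong S_3$, so these are six distinct divisors of $\overline{T}$. Identifying $\widetilde{G^2}/\Delta\cong G_0^2/\Delta_0\cong G_0$ and noting that the attainable values of $g=\sigma_2^{-1}\sigma_1$ over the coset of $h=\sigma_1\sigma_2^{-1}$ form the conjugacy class of $h$, the class structure $\{e\},\{\text{transpositions}\},\{\text{3-cycles}\}$ of $S_3$ lets me pick representatives $\rho_1,\dots,\rho_6$ whose $g_1,\dots,g_6$ exhaust $G_0$. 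With this choice, suppose $\sum_k\rho_k^{*}\mu_k=0$ with $\mu_k\in\Xi_\Delta$ and $\mu_{k_0}\neq0$; applying $\mathscr{D}_{\l_1}$, the $k_0$-term has a pole over $\Gamma_{g_{k_0}}$ while every other term is regular there, so the pole cannot cancel, a contradiction. Hence the sum is direct and $\dim_\Q\Xi=36\varphi(N)$.

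Finally, for the rank statement, for very general $t\in T$ the evaluation map $\mathrm{ev}_t$ is injective on the finite-dimensional $\Q$-space $\Xi$ (apply Lemma \ref{basiclem} to a $\Q$-basis and remove the countable union of the resulting proper loci), and since $\mathrm{ev}_t(\nu_{\rm tr}(\eta))=r(\eta_t)$ factors through $\mathrm{CH}^2(S_t,1)_{\rm ind}$ by Proposition \ref{transregprop}, the $\widetilde{G^2}$-translates of the cycles $\xi^{(i)}_{\bullet}$ generate a subgroup of rank $\ge\dim_\Q\Xi=36\varphi(N)$. The hardest part will be the pole-separation of the second and third paragraphs: one must confirm that the cocycle twists hidden in $\Theta_\rho$ neither create nor kill poles along the diagonal-type divisors $\Gamma_g$, and that the six cosets really can be represented so as to realize all six distinct graphs; granting this, the direct-sum decomposition and the rank bound are formal.
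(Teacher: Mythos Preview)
Your approach is essentially the same as the paper's: decompose $\Xi$ as a sum of six $\widetilde{\rho}_i^*\Xi_\Delta$'s indexed by $G_0^2/\Delta_0$, and prove the sum is direct by separating poles of $\mathscr{D}_{\l_1}(\langle\nu,\omega\rangle)$ along the six ``diagonal-type'' divisors $\rho^\sharp(\l_1-\l_2)=0$, invoking Lemma~\ref{polelemma} for the existence of a pole and the fact that $\chi(\rho),\d_i(\rho_i)\in\Gamma(\overline{T},\mathscr{O}_{\overline{T}}^\times)$ to ensure $\Theta_\rho$ neither creates nor kills poles there. The final passage to $\mathrm{CH}^2(S_t,1)_{\rm ind}$ via Lemma~\ref{basiclem} and Proposition~\ref{transregprop} is also the same.

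The one place where your write-up differs from the paper is the choice of coset representatives. You observe that $h=\sigma_1\sigma_2^{-1}$ labels the coset while $g=\sigma_2^{-1}\sigma_1$ only depends on the coset up to conjugacy, and then argue via the conjugacy-class structure of $S_3$ that representatives can be chosen so that the six $g$'s exhaust $G_0$. This is correct, but unnecessarily indirect: the paper simply takes $\sigma_1=\mathrm{id}$ throughout, so the six representatives are $(\mathrm{id},\sigma_2)$ with $\sigma_2\in G_0$, and the six divisors $\widetilde{\rho}_i^\sharp(\l_1-\l_2)=0$ are visibly the six graphs $\l_1=\sigma_2^{-1}(\l_2)$ removed in the definition of $T_0$. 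With this choice the conjugacy discussion disappears entirely. Your conjugacy argument is a valid alternative, but the explicit choice is both simpler and makes the regularity check (that the other five terms have no pole along a given $F_{i,j}$) completely transparent.
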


\begin{proof}
Let $\rho_1,\rho_2,\ldots,\rho_6\in G_0^2$ be the elements given by 
$$(\mathrm{id},\mathrm{id}), \ (\mathrm{id}, (0\ 1)), \ (\mathrm{id}, \ (0\ 1/\l)), \ (\mathrm{id}, (1\ 1/\l)), \ (\mathrm{id}, (0\ 1 \ 1/\l))\text{ and } \ (\mathrm{id}, (0\ 1/\l \ 1)), $$
respectively. 
These are representatives of $G_0^2/\Delta_0$.
We take their lifts $\widetilde{\rho}_1,\widetilde{\rho}_2, \ldots, \widetilde{\rho}_6\in \widetilde{G^2}$ by $\widetilde{G^2}\to G_0^2$.
We will show that the natural map
\begin{equation}\label{injective}
\bigoplus_{i=1}^6\widetilde{\rho}_i^*\Xi_\Delta \to \Xi
\end{equation}
is injective.
For $i=1,\ldots,6$, let $\nu_i\in \Xi_\Delta$.
We will show that if $\nu_i$ satisfies 
$$\widetilde{\rho}_1^*\nu_1+\widetilde{\rho}_2^*\nu_2+\cdots +\widetilde{\rho}_6^*\nu_6=0,
$$
then $\nu_1=\nu_2=\cdots =\nu_6=0$.

We assume $\widetilde{\rho}_1^*\nu_1+\widetilde{\rho}_2^*\nu_2+\cdots +\widetilde{\rho}_6^*\nu_6=0$, then we have
\begin{equation}\label{Drelation}
    \mathscr{D}_{\l_1}(\langle\widetilde{\rho}_1^*\nu_1,\omega\rangle)=   
    -\mathscr{D}_{\l_1}(\langle\widetilde{\rho}_2^*\nu_2,\omega\rangle)-\cdots - \mathscr{D}_{\l_1}(\langle\widetilde{\rho}_6^*\nu_6,\omega\rangle).
\end{equation}
We regard $\mathscr{D}_{\l_1}(\langle\widetilde{\rho}_i^*\nu_i,\omega\rangle)$ as rational functions on $\overline{T}=R\times R$ and compare the poles of the both side of \eqref{Drelation}.

By Proposition \ref{thetadef}, we have $\mathscr{D}_{\l_1}(\langle\widetilde{\rho}_i^*\nu_i,\omega\rangle)=\Theta_{\widetilde{\rho}_i}\left(\mathscr{D}_{\l_1}(\langle\nu_i,\omega\rangle)\right)$.
By Lemma \ref{polelemma}, if $\nu_1\neq 0$, there exists $j_1,k_1\in \{0,1,\ldots,N-1\}$ such that $\mathscr{D}_{\l_1}(\langle\nu_1,\omega\rangle)$ has a pole on $F_{j_1,k_1}$.
Thus, the left-hand side of \eqref{Drelation} has a pole on $\widetilde{\rho}_1^{-1}\left(F_{j_1,k_1}\right)$, whose equation is given by 
$$
\zeta^{j'_1}\l_1^{\frac{1}{N}}-\l_2^{\frac{1}{N}}=0,\quad(1-\l_2)^{\frac{1}{N}}-\zeta^{k_1'}(1-\l_1)^{\frac{1}{N}}=0
$$
for some $j_1',k_1'\in \Z$ depending on the choice of the lift $\widetilde{\rho}_1\in\widetilde{G^2}$.
On the other hand, 
since we have $\delta_1(\widetilde{\rho}_i),\chi(\widetilde{\rho}_i)\in \Gamma(\overline{T},\mathscr{O}_{\overline{T}}^\times)$, by \eqref{normalfunc}, $\mathscr{D}_{\l_1}(\langle\widetilde{\rho}_i^*\nu_i,\omega\rangle)=\Theta_{\widetilde{\rho}_i}\left(\mathscr{D}_{\l_1}(\langle\nu_i,\omega\rangle)\right)$ is regular outside of the zeros of $\widetilde{\rho}_i^\sharp(\l_1-\l_2)$.
For $i=2,3,\ldots,6$, $\widetilde{\rho_i}^\sharp(\l_1-\l_2)$ is 
$$
\l_1-\frac{\l_2}{\l_2-1},\quad \l_1+\l_2-1, \quad \l_1-\frac{1}{\l_2}, \quad  \l_1-\frac{1}{1-\l_2},\quad 
\l_1 -\frac{\l_2-1}{\l_2},
$$
respectively.
Since $\widetilde{\rho}_1^{-1}(F_{j_1,k_1})$ is not contained in zero loci of these functions, each term in the right-hand side of \eqref{Drelation} has no poles on $\widetilde{\rho}_1^{-1}(F_{j_1,k_1})$.
This implies $\nu_1=0$.

Then by \eqref{Drelation}, we have 
$$
\mathscr{D}_{\l_1}(\langle\widetilde{\rho}_2^*\nu_2,\omega\rangle)=   
-\mathscr{D}_{\l_1}(\langle\widetilde{\rho}_3^*\nu_3,\omega\rangle)-\cdots - \mathscr{D}_{\l_1}(\langle\widetilde{\rho}_6^*\nu_6,\omega\rangle).
$$
Continuing the same discussions, 
we can show $\nu_2=0$, $\nu_3=0,\dots, \nu_6=0$, inductively.
Thus we have shown that \eqref{injective} is injective.
Then we have
$$\dim_\Q\Xi\ge 6\cdot \dim_\Q\Xi_\Delta = 36\cdot \varphi(N), $$
which finishes the proof of the former statement.

For the latter statement, consider the subgroup
$$
\widetilde{\Xi}_t = \left\langle\rho^*_t((\xi_0^{(i)})_{\rho(t)}),\rho^*_t((\xi_0^{(i)})_{\rho(t)})\mid i\in \Z/N\Z, \ \rho\in \widetilde{G^2}\right\rangle_\Z\quad \subset \mathrm{CH}^2(S_t,1)
$$
for $t\in T$, where $\rho_t\colon S_t\to S_{\rho(t)}$ is the pull-back of $\rho\colon S\to S$.
By Lemma \ref{basiclem}, $\Xi\rightarrow \mathrm{ev}_t(\Xi)$ is bijective for very general $t\in T$.
By the former statement, we have $\dim_\Q \mathrm{ev}_t(\Xi)\geq  36\cdot \varphi(N)$.
Since $\mathrm{ev}_t(\Xi)$ is generated by $r(\widetilde{\Xi}_t)$ over $\Q$ and the transcendental regulator map $r$ factors $\mathrm{CH}^2(X,1)_{\mathrm{ind}}$ (see Proposition \ref{transregprop}), this shows that $\widetilde{\Xi}_t$ generates a subgroup in $\mathrm{CH}^2(X,1)_{\mathrm{ind}}$ of rank $36 \cdot \varphi(N)$.
\end{proof}

\section*{Acknowledgment}
This article was written while the first author was visiting the University
of Winnipeg. He would like to thank the Department of Mathematics and Statistics for their hospitality. 
The first author is supported by Waseda University Grant for Early Career Researchers (Project number: 2025E-041). 
The second author is supported by JSPS KAKENHI 21H00971.


\begin{thebibliography}{99}
\bibitem{Archinard}
N. Archinard, 
\textit{Hypergeometric abelian varieties}, 
Canad. J. Math. \textbf{55} (2003), no. 5, 897--932.

\bibitem{Bloch}
S. Bloch, 
\textit{Algebraic cycles and higher $K$-theory}
Adv. in Math. \textbf{61} (1986), no. 3, 267--304.


\bibitem{CDKL}
X. Chen, C. Doran, M. Kerr, J. D. Lewis, 
\textit{Normal functions, Picard-Fuchs equations, and elliptic fibrations on  $K3$  surfaces}, 
J. Reine Angew. Math. \textbf{721} (2016), 43--79.

\bibitem{CL}
X. Chen and J.~D. Lewis, \textit{The Hodge-$\mathscr{D}$-conjecture for $K3$ and abelian surfaces}, J. Algebraic Geom. {\bf 14} (2005), no.~2, 213--240.

\bibitem{dAMS08}
P.~L. del~\'Angel~Rodr\'iguez and S.~J. M\"uller-Stach, \textit{Differential equations associated to families of algebraic cycles}, Ann. Inst. Fourier (Grenoble) {\bf 58} (2008), no.~6, 2075--2085.

\bibitem{Levine}
M. Levine, 
\textit{Localization on singular varieties}, 
Invent. Math. \textbf{91} (1988), no. 3, 423--464.

\bibitem{MS}
M\"uller-Stach, S.~J. 
\textit{Constructing indecomposable motivic cohomology classes on algebraic surfaces.} 
J. Algebraic Geom. \textbf{6} (1997), no.3, 513--543.

\bibitem{MS2}
S. M\"uller-Stach, 
\textit{Algebraic cycle complexes: basic properties}, 
NATO Sci. Ser. C Math. Phys. Sci., \textbf{548}
Kluwer Academic Publishers, Dordrecht, 2000, 285--305.

\bibitem{MSC}
S.~J. M\"uller-Stach, S. Saito and A. Collino, 
\textit{On $K_1$ and $K_2$ of algebraic surfaces}, $K$-Theory {\bf 30} (2003), no.~1, 37--69

\bibitem{Sato}
K. Sato,  
\textit{Higher Chow cycles on a family of Kummer surfaces}, 
Canadian Journal of Mathematics, First View, pp. 1--34. 

\bibitem{Spi}
Spie\ss, M.
\textit{On indecomposable elements of K1 of a product of elliptic curve.}
K-theory \textbf{17}, 1999.

\bibitem{Sre14}
Sreekantan, R.
\textit{Higher Chow cycles on Abelian surfaces and a non-Archimedean analogue of the Hodge-$\mathcal{D}$-conjecture.} 
Compos. Math. \textbf{150} (2014), no.4, 691--711. 

\bibitem{Terasoma}
T. Terasoma, 
\textit{A construction of surfaces with large higher Chow groups}, 
Nagoya Math. J. \textbf{236} (2019), 311--331.

\bibitem{Voi}
C. Voisin, 
\textit{Hodge theory and complex algebraic geometry. $I$}, 
Cambridge Stud. Adv. Math., \textbf{76}
Cambridge University Press, Cambridge, 2002, x+322 pp.



\end{thebibliography}
\end{document}